\newtheorem{definition}{Definition}[section]
\newtheorem{theorem}[definition]{Theorem}
\newtheorem{proposition}[definition]{Proposition}
\newtheorem{remark}[definition]{Remark}
\newtheorem{lemma}[definition]{Lemma}
\newtheorem{corollary}[definition]{Corollary}
\newtheorem{example}[definition]{Example}
\def\C{{\mathbb{C}}}
\def\N{{\mathbb{N}}}
\def\Z{{\mathbb{Z}}}
\def\deg{{\mathrm{deg}}}
\def\dim{{\mathrm{dim}}}
\def\rank{{\mathrm{rank\;}}}
\def\End{{\mathrm{End}}}
\def\Hom{{\mathrm{Hom}}}
\def\HOM{{\mathrm{HOM}}}
\def\EXT{{\mathrm{EXT}}}
\def\ui{{\text{$\underline{i}$}}}
\def\oE{{\widehat{E}}}
\def\ol{\overline{\lambda}}
\def\l{{\lambda}}
\def\idm{\text{\bfseries 1}}
\def\u{{\mathcal{U}}}
\def\e{{\mathcal{E}}}
\def\id{{\mathrm{Id}}}
\def\HMF{{\mathrm{HMF}}}
\def\ostimes{{\,\otimes\hspace{-0.7em}\raisebox{-0.5ex}{${}_{{}_{S}}$}\,}}
\newcommand{\kaku}[1]{\txt{\begin{picture}(0,0)(0,0)\special{pn 8}\special{pa 100 100}\special{pa -100 100}\special{pa -100 -100}\special{pa 100 -100}\special{pa 100 100}\special{fp}\put(0,0){\makebox(0,0){\emph{$#1$}}}\end{picture}}}
\newcommand{\skaku}[1]{\txt{\begin{picture}(0,0)(0,0)\special{pn 8}\special{pa 80 80}\special{pa -80 80}\special{pa -80 -80}\special{pa 80 -80}\special{pa 80 80}\special{fp}\put(0,0){\makebox(0,0){\emph{$#1$}}}\end{picture}}}
\newcommand{\wkaku}[1]{\txt{\begin{picture}(0,0)(0,0)\special{pn 8}\special{pa 130 100}\special{pa -130 100}\special{pa -130 -100}\special{pa 130 -100}\special{pa 130 100}\special{fp}\put(0,0){\makebox(0,0){\emph{$#1$}}}\end{picture}}}
\newcommand{\wwkaku}[1]{\txt{\begin{picture}(0,0)(0,0)\special{pn 8}\special{pa 145 100}\special{pa -145 100}\special{pa -145 -100}\special{pa 145 -100}\special{pa 145 100}\special{fp}\put(0,0){\makebox(0,0){\emph{$#1$}}}\end{picture}}}
\newcommand{\btime}[1]{\boxtimes\hspace{-1em}\raisebox{-0.8ex}{${}_{#1}$}\,\,}
\newcommand{\U}[1]{\dot{\mathbf U}_q(\mathfrak{sl}_{#1})}
\newcommand{\Uv}[1]{\dot{\mathbf U}_q(\mathfrak{sl}_{#1})}
\title{$\mathfrak{sl}_N$-Web categories}
\author{Marco Mackaay and Yasuyoshi Yonezawa}
\thanks{Both authors were supported by the FCT - Fundac\~{a}o para a 
Ci\^{e}ncia e a Tecnologia, through project number PTDC/MAT/101503/2008, 
New Geometry and Topology.}
\date{\today}
\begin{document}

\begin{abstract}
In this paper we show how the colored Khovanov-Rozansky 
$\mathfrak{sl}_N$-matrix factorizations, due to Wu~\cite{wu} and 
Y.Y~\cite{yo1,yo2}, can be used 
to categorify the quantum skew Howe duality 
defined by Cautis, Kamnitzer and Morrison in~\cite{ckm}. 
In particular, we define web categories and $2$-representations of 
Khovanov and Lauda's categorical quantum $\mathfrak{sl}_m$ 
on them. We also show that this implies that each such web category is 
equivalent to the category of finite-dimensional graded projective modules over 
a certain level-$N$ cyclotomic KLR-algebra. 
\end{abstract}

\maketitle

\paragraph*{Acknowledgements}
M.~M thanks Bruce Fontaine, Joel Kamnitzer, Mikhail Khovanov and Ben Webster 
for helpful exchanges of emails and discussions. 
Y.Y thanks Yoshiyuki Kimura, Naoya Enomoto and Catharina Stroppel for many helpful discussions.
We thank Daniel Tubbenhauer for comments on an earlier version of this paper. 
\tableofcontents
%
%
%
%
\section{Introduction}\label{intro}

Recently Cautis, Kamnitzer and Morrison~\cite{ckm} found a complete set of 
relations on colored $\mathfrak{sl}_N$-webs. We recall that these webs 
represent intertwiners between tensor products of fundamental 
${\mathbf U}_q(\mathfrak{sl}_N)$-representations. The main ingredient 
in~\cite{ckm} was a diagrammatic version of quantum skew Howe duality, 
which shows 
that ${\mathbf U}_q(\mathfrak{sl}_m)$ acts on 
$\mathfrak{sl}_N$-web spaces, where $m$ and $N$ are distinct non-negative 
integers. 

In this paper we show how the colored $\mathfrak{sl}_N$-matrix factorizations 
can be used to categorify Cautis, Kamnitzer and Morrison's results. These 
matrix factorizations are due to Wu~\cite{wu} and Y.Y.~\cite{yo1,yo2} and 
generalize Khovanov and Rozansky's~\cite{kr} matrix factorizations 
in their groundbreaking work on $\mathfrak{sl}_N$-link homologies.  
\vskip0.5cm 
To be a bit more precise, let $N\geq 2$ and $m,d\geq 0$ be arbitrary 
integers. We first define a $2$-functor $\Gamma_{m,d,N}$ from 
Khovanov and Lauda's categorified quantum $\mathfrak{sl}_m$, 
defined in~\cite{kl3} and denoted $\mathcal{U}_Q(\mathfrak{sl}_m)$ in this 
paper, to a certain $2$-category of colored 
$\mathfrak{sl}_N$-matrix factorizations, denoted $\HMF_{m,d,N}$. 
This $2$-functor is similar 
to Khovanov and Lauda's $2$-functor $\Gamma_d$ from 
$\mathcal{U}_Q(\mathfrak{sl}_m)$ to a $2$-category built from the cohomology 
rings of partial flag varieties (of flags in $\mathbb{C}^d$). However, 
they are not equivalent and do not categorify the same maps, as we 
will explain. 

Now assume that $d=N\ell$, for some $\ell\in\N_{>1}$, and that $m\geq d$. 
Denote by 
$\omega_{\ell}$ the $\ell$-th fundamental $\mathfrak{sl}_m$-weight and 
take $\Lambda=N\omega_{\ell}$. 
We define an additive graded $\mathfrak{sl}_N$-web category 
${\mathcal W}_{\Lambda}^{\circ}$ and use $\Gamma_{m,d,N}$ to define 
a $2$-representation of categorified quantum $\mathfrak{sl}_m$ on it. 

We prove that this implies that $\dot{\mathcal W}_{\Lambda}^{\circ}$, 
the Karoubi envelope of ${\mathcal W}_{\Lambda}^{\circ}$, 
is equivalent to the category 
of finite-dimensional graded projective 
$R_{\Lambda}$-modules, where  
$R_{\Lambda}$ is the level-$N$ cyclotomic KLR-algebra of highest weight 
$\Lambda$. In particular, this implies that the split Grothendieck 
group of $\dot{\mathcal W}^{\circ}_{\Lambda}$ is isomorphic to the corresponding 
web space.   

\vskip0.5cm

The category $\dot{\mathcal W}^{\circ}_{\Lambda}$ decomposes into blocks, 
as we will show. Each of these blocks is equivalent to the category of 
finite-dimensional graded projective modules over a certain finite-dimensional 
algebra, called the $\mathfrak{sl}_N$-{\em web algebra}, 
which is studied in a sequel to the present paper~\cite{mack2}. 

For $N=2$ these algebras were introduced 
by Khovanov~\cite{kh}, who called them {\em arc algebras}. 
Huerfano and Khovanov~\cite{hkh} categorified certain level-two irreducible 
$\mathbf{U}_q(\mathfrak{sl}_m)$-representations using arc algebras.  
In those days categorified quantum groups and 
cyclotomic KLR algebras had not been invented yet, so they did not work out 
the full $\mathfrak{sl}_m$ $2$-representations. But other than that our 
results can be seen as the level-$N$ generalization of theirs. 

The representation theory of the arc algebras and its relation to the 
geometry of 2-block Springer varieties was studied in detail 
in~\cite{bs,bs2,bs3,bs4,bs5,ck,kh,kh2,str,sw}. For $N=3$ the web algebras were 
introduced and studied by Pan, Tubbenhauer and M.M. in~\cite{mpt}. The 
categorified quantum skew Howe duality was proved in that paper. In general 
less is known about the web algebras for $N=3$ than for $N=2$. 
\vskip0.5cm
For $N=2$ and $N=3$ the web category can be defined using cobordisms or 
foams respectively. The reason we do not use foams in this paper, 
is that they have not yet been 
defined for $\mathfrak{sl}_N$ in general. For $N=2$ and $N=3$ it is known that 
the space of $\mathfrak{sl}_N$-foams between two webs is isomorphic to the 
$\mathrm{EXT}$-group of the corresponding matrix factorizations~\cite{kr,mv2}. 
For $N\geq 4$, $\mathfrak{sl}_N$-foams were defined and studied in~\cite{msv2}, but only for the colors 1,2 and 3. To use $\mathfrak{sl}_N$-foams for the 
categorification of $\mathfrak{sl}_N$-webs in general, one would have to 
define $\mathfrak{sl}_N$-foams for all colors and 
find a consistent and complete set of relations on them. Perhaps our 
categorification of quantum skew Howe duality in this paper can help to 
achieve that goal, which in our view would be a proper and complete 
categorification of the results in~\cite{ckm}.  

The results in this paper might also help to find a proof 
that the Khovanov-Rozansky $\mathfrak{sl}_N$-link homology 
is isomorphic to Webster's $\mathfrak{sl}_N$-link 
homology~\cite{we1,we2}, for which he used generalizations of the 
cyclotomic KLR algebras. 
For $N=2$ and $N=3$ a first step in that direction has 
already been taken by Lauda, Queffelec and Rose in~\cite{lqr}. They 
used categorified quantum skew Howe duality 
to prove that Khovanov's (and therefore Khovanov and Rozansky's) 
$\mathfrak{sl}_N$ link homologies are isomorphic to 
link homologies obtained from the so called Chuang-Rouquier complexes 
over level-$N$ cyclotomic KLR algebras, for $N=2,3$.\footnote{The idea to relate Khovanov-Lauda diagrams to foams was first suggested by 
Khovanov to M.~M. in 2008 and worked out in an unpublished 
preprint~\cite{mack} for 
$\mathfrak{sl}_3$ foams over $\Z/2\Z$. In~\cite{mpt} and~\cite{lqr} 
the sign problem in that preprint got fixed; by ``brute force'' in the first 
case and by the introduction of a Blanchet-like version of $\mathfrak{sl}_3$ 
foams in the second case.} 
Their result probably generalizes to arbitrary $N\geq 2$, 
using categorified skew Howe duality and matrix factorizations as 
in this paper. 

Webster~\cite{we2} showed that his link homologies are isomorphic to 
Mazorchuk and Stroppel's representation theoretic link homologies~\cite{ms}, 
and also to the Koszul dual version of these homologies 
independently obtained by Sussan~\cite{su}. Mazorchuk and Stroppel proved that 
their link homologies are isomorphic 
to Khovanov's and Khovanov and Rozansky's link homologies for $N=2,3$, and 
conjectured that to be true for $N\geq 4$ also. Thus we know that all 
the link homologies mentioned in this introduction 
are isomorphic for $N=2,3$. By the remarks above, 
our work might help to prove the same result for arbitrary $N\geq 2$.

We should also remark that there is an algebro-geometric categorification of 
quantum skew Howe duality, due to Cautis, Kamnitzer and Licata~\cite{ckl}. It 
would be interesting to understand the precise relation with the results 
in this paper.   
\vskip0.5cm

The paper is organized as follows:
\begin{itemize}
\item After fixing some notation and conventions in 
Section~\ref{sec:conventions}, we briefly recall 
$\U{n}$ and its fundamental representations in Section~\ref{sec:fund}. 
After this section, we will always use the parameters $m$ and $N$ 
instead of $n$ in order to distinguish the two sides which occur in 
Howe duality.  
\item In Section~\ref{sec:webs} we recall the necessary material 
on $\mathfrak{sl}_N$ webs and quantum skew Howe duality. 
\item In Section~\ref{mf} we recall some material on matrix factorizations, 
which can be found in more detail in~\cite{kr,wu,yo1,yo2}. These 
matrix factorizations categorify colored $\mathfrak{sl}_N$ webs. 
\item In Section~\ref{sec:catgroupandrep} we briefly recall 
Khovanov and Lauda's~\cite{kl3} diagrammatic categorification of 
quantum $\U{m}$, denoted $\u_Q(\mathfrak{sl}_m)$ in this paper, and the 
cyclotomic KLR algebras, denoted $R_{\Lambda}$. We also recall Brundan and 
Kleshchev's~\cite{bk} categorification theorem and Rouquier's~\cite{rou} 
additive universality theorem for $R_{\Lambda}$.  
\item In Sections~\ref{sec:somemorphisms} and~\ref{sec:HMF} 
we prove all the technical results about matrix factorizations 
that are needed in Section~\ref{sec:cathowe}. 
\item In Section~\ref{sec:cathowe} we first give the $2$-functor 
$$\Gamma_{m,d,N}^*\colon \u_Q(\mathfrak{sl}_m)^*\to \HMF_{m,d,N}^*$$ 
and prove that is well-defined in Theorem~\ref{thm:2-funct}. The meaning 
of $*$ is explained in Section~\ref{sec:conventions}.  

Then we define the web categories 
using the matrix factorizations from Section~\ref{mf} and 
prove the aforementioned relation with the level-$N$ cyclotomic KLR algebras 
in Theorem~\ref{thm:categorification1}. As a consequence we 
see that the web categories categorify the web spaces 
in Corollary~\ref{cor:categorification11}. 
\end{itemize}



 
\section{Notation and conventions}\label{sec:conventions}
In this sections fix some notations and explain some conventions. 
\vskip0.5cm
Let $\mathcal{C}^*$ be a $\Z$-graded $\C$-linear additive category 
which admits translation (for a precise definition, see~\cite{l} for 
example). Then $\{t\}$ denotes a positive translation/shift 
of $t$ units. For any 
Laurent polynomial $f(q)=\sum a_iq^i\in \N[q,q^{-1}]$, 
we define 
\begin{equation*}
X^{\oplus f(q)}:=\bigoplus_{i}\left(X\{i\}\right)^{\oplus a_i}.
\end{equation*}

Let $\mathcal{C}$ be the subcategory with the same objects as $\mathcal{C}^*$ 
but only degree-zero morphisms. For any pair of objects 
$X,Y\in \mathcal{C}$, let $\mathrm{Hom}(X,Y)$ 
be the usual hom-space in $\mathcal{C}$. 
Then the hom-space in $\mathcal{C}^*$ is given by  
$$\mathrm{HOM}(X,Y):=\bigoplus_{t\in\Z}\mathrm{Hom}(X\{t\},Y).$$
 
For simplicity, assume that $\mathrm{HOM}(X,Y)$ is finite-dimensional. Then we 
define the {\em $q$-dimension} of $\mathrm{HOM}(X,Y)$ by 
$$\dim_q\,\mathrm{HOM}(X,Y):=\sum_{t\in\Z} q^t\dim\, \mathrm{Hom}(X\{t\},Y).$$
\vskip0.5cm 
Assume that $\mathcal{C}$ is Krull-Schmidt. The split Grothendieck 
group $K_0(\mathcal{C})$ is by definition the Abelian group generated 
by the isomorphism classes of the objects in $\mathcal{C}$ modulo the relation  
$$[X\oplus Y]=[X]+[Y],$$
for any objects $X,Y\in\mathcal{C}$. It becomes a $\Z[q,q^{-1}]$-module, 
by defining 
$$q[X]=[X\{1\}],$$
for any object $X\in\mathcal{C}$. For any 
Laurent polynomial $f(q)=\sum a_iq^i\in \N[q,q^{-1}]$, we get 
$$f(q)[X]=[X^{\oplus f(q)}].$$

Assume that 
$S=\{X_1,\ldots,X_s\}$ is a set of indecomposable objects in 
$\mathcal{C}$ such that 
\begin{itemize}
\item any indecomposable object in $\mathcal{C}$ is 
isomorphic to $X_i\{t\}$ for a certain $i\in\{1,\ldots,s\}$ and $t\in\Z$;
\item for all $i\ne j\in \{1,\ldots,s\}$ and all $t\in\Z$ we have  
$$X_i\not\cong X_j\{t\}.$$
\end{itemize}
Then it is well-known that $K_0(\mathcal{C})$ is freely generated by $S$.  

In this paper we will mostly tensor $K_0(\mathcal{C})$ with $\C(q)$, so 
we define 
$$K_0^q(\mathcal{C}):=K_0(\mathcal{C})\otimes_{\Z[q,q^{-1}]}\C(q).$$
\vskip0.5cm
A $q$-sesquilinear form on $K_0^q(\mathcal{C})$ is by definition a form 
$$\langle \cdot,\cdot\rangle\colon K_0(\mathcal{C})\times K_0(\mathcal{C})\to 
\C(q)$$
satisfying 
\begin{eqnarray*}
\langle f(q)[X],[Y]\rangle &=& f(q^{-1})\langle [X],[Y]\rangle\\
\langle [X],f(q)[Y]\rangle &=& f(q)\langle [X],[Y]\rangle,
\end{eqnarray*}
for any $f(q)\in \C(q)$. There exists a well-known 
$q$-sesquilinear form on $K_0^q(\mathcal{C})$ 
called the {\em Euler form}. It is defined by 
$$\langle [X],[Y]\rangle := \dim_q\, \mathrm{HOM}(X,Y),$$
for any objects $X,Y\in \mathcal{C}$. Note that the Euler form takes 
values in $\N[q,q^{-1}]$ if the $\mathrm{HOM}$-spaces are finite-dimensional.


\section{The special linear quantum algebra and its fundamental representations}
\label{sec:fund}
We briefly recall the special linear quantum algebra and the pivotal 
category of its fundamental representations. 
\subsection{The special linear quantum algebra}
Let $n\in\N_{>1}$ be arbitrary. We write  
$$\alpha_i:=(0,\ldots,1,-1,\ldots,0)\in\Z^{n}$$ with $1$ on the $i$-th 
position, for $i=1,\ldots,n-1$. We denote the Euclidean inner product 
on $\Z^{n}$ by $(\cdot,\cdot)$. 
   
\begin{definition} The {\em quantum special linear algebra} 
${\mathbf U}_q(\mathfrak{sl}_n)$ is 
the associative unital $\C(q)$-algebra generated by $K_i^{\pm 1}, E_{\pm i}$, 
for $i=1,\ldots, n-1$, subject to 
the relations
\begin{gather*}
K_iK_j=K_jK_i,\quad K_iK_i^{-1}=K_i^{-1}K_i=1,
\\
E_{+i}E_{-j} - E_{-j}E_{+i} = \delta_{i,j}\dfrac{K_i-K_i^{-1}}{q-q^{-1}},
\\
K_iE_{\pm j}=q^{\pm (\alpha_i,\alpha_j)}E_{\pm j}K_i,
\\
E_{\pm i}^2E_{\pm j}-(q+q^{-1})E_{\pm i}E_{\pm j}E_{\pm i}+E_{\pm j}E_{\pm i}^2=0,
\qquad\text{if}\quad |i-j|=1,\\
E_{\pm i}E_{\pm j}-E_{\pm j}E_{\pm i}=0,\qquad\text{else}
\end{gather*} 
\end{definition}

Recall that ${\mathbf U}_q(\mathfrak{sl}_n)$ is a Hopf algebra with 
coproduct given by 
\begin{equation*}
\label{eq:coproduct}
\Delta(E_{+i})=E_{+i}\otimes K_i+1\otimes E_{+i},\quad 
\Delta(E_{-i})=E_{-i}\otimes 1+K_i^{-1}\otimes E_{-i},\quad
\Delta(K_i)=K_i\otimes K_i
\end{equation*}
and antipode by 
\begin{equation*}
\label{eq:antipode}
S(E_{+i})=-E_{+i}K_i^{-1},\quad 
S(E_{-i})=-K_iE_{-i},\quad
S(K_i)=K_i^{-1}.  
\end{equation*}
The counit is given by 
\begin{equation*}
\label{eq:counit}
\epsilon(E_{\pm i})=0,\quad \epsilon(K_i)=1.
\end{equation*}
The Hopf algebra structure is used to define 
${\mathbf U}_q(\mathfrak{sl}_n)$ actions on tensor products and duals of 
${\mathbf U}_q(\mathfrak{sl}_n)$-modules. 
\vskip0.5cm
Recall that the ${\mathbf U}_q(\mathfrak{sl}_n)$-weight lattice is 
isomorphic to $\Z^{n-1}$. For any $i=1,\ldots, n-1$, the element $K_i$ acts as 
$q^{\lambda_i}$ on the $\lambda$-weight space of any weight representation. 

Although we have not recalled the definition of 
${\mathbf U}_q(\mathfrak{gl}_n)$, it is sometimes convenient to use 
${\mathbf U}_q(\mathfrak{gl}_n)$-weights. Recall that 
the ${\mathbf U}_q(\mathfrak{gl}_n)$-weight lattice is isomorphic to $\Z^n$ 
and that any $\vec{k}=(k_1,\ldots,k_N)\in\Z^n$ determines a unique 
${\mathbf U}_q(\mathfrak{sl}_n)$-weight 
$$\lambda=(k_1-k_2,\ldots,k_{n-1}-k_n)\in
\Z^{n-1}.$$ 
In this way, we get an isomorphism 
\begin{equation}
\label{eq:isolattices}
\Z^n/\langle(1^n)\rangle\cong \Z^{n-1}.
\end{equation}

Conversely, given a ${\mathbf U}_q(\mathfrak{sl}_n)$-weight 
$\lambda=(\lambda_1,\ldots,\lambda_{n-1})$, there is not a 
unique choice of ${\mathbf U}_q(\mathfrak{gl}_n)$-weight. We first have to 
fix the sum of the entries: for any $d\in\Z$, the equations 
\begin{align}
\label{eq:sl-gl-wts1}
k_i-k_{i+1}&=\lambda_i,\\
\label{eq:sl-gl-wts2}
\qquad \sum_{i=1}^{n} k_i&=d
\end{align}  
determine $\vec{k}=(k_1,\ldots,k_n)$ uniquely, 
if there exists a solution to~\eqref{eq:sl-gl-wts1} 
and~\eqref{eq:sl-gl-wts2} at all. 

\begin{remark}
Since the ${\bf U}_q(\mathfrak{sl}_n)$ and ${\bf U}_q(\mathfrak{gl}_n)$ 
weights and weight lattices are equal those of the 
corresponding classical algebras, we will often refer to them as the 
$\mathfrak{sl}_n$ and $\mathfrak{gl}_n$-weights and weight lattices.  
\end{remark}

For weight representations, we can also use the idempotented 
version of ${\mathbf U}_q(\mathfrak{sl}_n)$, denoted $\U{n}$, due to 
Beilinson, Lusztig and MacPherson~\cite{B-L-M}. 
For $n=2$, define $i'=(2)$. For $n>2$, define    
$$i':=
\begin{cases}
(2,-1,0\ldots,0),&\text{for}\;i=1;\\
(0,\ldots,-1,2,-1,\ldots,0),&\text{for}\; 2\leq i\leq n-2;\\
(0,\ldots,0,-1,2),&\text{for}\;i=n-1.
\end{cases}
$$ 

Adjoin an idempotent $1_{\lambda}$ for each $\lambda\in\Z^{n-1}$ and add 
the relations
\begin{align*}
1_{\lambda}1_{\mu} &= \delta_{\lambda,\mu}1_{\lambda},   
\\
E_{\pm i}1_{\lambda} &= 1_{\lambda \pm i'}E_{i},
\\
K_i1_{\lambda} &= q^{\lambda_i}1_{\lambda}.
\end{align*}
\begin{definition} The {\em idempotented quantum special linear algebra} 
is defined by 
\[
\U{n}=\bigoplus_{\lambda,\mu\in\Z^{n-1}}1_{\lambda}{\mathbf U}_q(\mathfrak{sl}_n)1_{\mu}.
\]
\end{definition} 

The following remark is useful for Proposition~\ref{prop:CKM}.
\begin{remark}
\label{rem:algebraversuscategory}
Sometimes we will consider ${\mathbf U}_q(\mathfrak{sl}_n)$ as a 
$\C(q)$-linear category rather than an algebra. The objects are the 
idempotents $1_{\lambda}$, for $\lambda\in\Z^{n-1}$, and 
$$\mathrm{Hom}(1_{\lambda},1_{\mu})
:=1_{\lambda}{\mathbf U}_q(\mathfrak{sl}_n)1_{\mu}.$$
\end{remark}

\subsection{Fundamental representations}\label{sec:fundreps}
As already remarked in the introduction, from now on we distinguish 
the two sides which occur in Howe duality by using the parameters 
$m$ and $N$ instead of $n$ for the general linear quantum groups.
\vskip0.5cm
In this section we recall the fundamental ${\mathbf U}_q(\mathfrak{sl}_N)$ 
representations, following~\cite{ckm,mor}. 
The basic ${\mathbf U}_q(\mathfrak{sl}_N)$-representation is denoted $\C^N_q$. 
It has a standard basis $\{x_1,\ldots,x_N\}$ on which the action is given by 
$$
E_{+i}(x_j)=
\begin{cases}
x_i,&\text{if}\;j=i+1;\\
0,&\text{else}.
\end{cases}
\quad
E_{-i}(x_j)=
\begin{cases}
x_{i+1},&\text{if}\;j=i;\\
0,&\text{else}.
\end{cases}
$$
$$
K_i(x_j)=
\begin{cases}
qx_i,&\text{if}\;j=i;\\
q^{-1}x_{i+1},&\text{if}\; j=i+1;\\
x_j,&\text{else}.
\end{cases}
$$

Using the basic representation, one can define all fundamental 
${\mathbf U}_q(\mathfrak{sl}_N)$-representations. Define the 
{\em quantum exterior algebra}
$$\Lambda^{\bullet}_q(\C^N_q):=T\C^N_q/\langle \{x_i\otimes x_i,\;x_i\otimes x_j+q
x_j\otimes x_i\mid 1\leq i<j\leq N\}\rangle.$$ 
We denote the equivalence class of $x\otimes y$ by $x \wedge_q y$. 
Note that 
$$\Lambda^{\bullet}_q(\C^N_q)=\bigoplus_{k=0}^N \Lambda^k(\C_q^N)$$
is a graded algebra. For each $0\leq k\leq N$, the homogeneous direct 
summand $\Lambda_q^k(\C_q^N)$ is an irreducible 
${\mathbf U}_q(\mathfrak{sl}_N)$-representation. For $k=0,N$ it is the 
trivial representation and for $1\leq k\leq N$ it is called the 
{\em $k$-th fundamental representation}. It is well-known that the dual of 
the $k$-th fundamental representation is isomorphic to the $(N-k)$-th 
fundamental representation.  
\vskip0.5cm
In this paper we will use tensor products of fundamental representations 
and their duals, which are also 
${\mathbf U}_q(\mathfrak{sl}_N)$-representation by the Hopf 
algebra structure on ${\mathbf U}_q(\mathfrak{sl}_N)$.
\begin{definition}
Let $\mathcal{R}ep(\mathrm{SL}_N)$ be the pivotal category whose objects are 
tensor products of fundamental ${\mathbf U}_q(\mathfrak{sl}_N)$-representations and their duals and 
whose morphisms are intertwiners. 
\end{definition}
Morrison (Theorem 3.5.8 in~\cite{mor}) defined a generating set 
of intertwiners in $\mathcal{R}ep(\mathrm{SL}_N)$, the 
precise definition of which is not 
relevant here. In Section~\ref{sec:webs} we recall 
Cautis, Kamnitzer and Morrison's diagrammatic presentation 
of $\mathcal{R}ep(\mathrm{SL}_N)$ in~\cite{ckm}, which will be used 
in the rest of this paper. 

\section{$\mathrm{SL}_N$ webs}\label{sec:webs}
The morphisms in $\mathcal{R}ep(\mathrm{SL}_N)$ can be represented graphically 
by {\em webs}. These are certain trivalent graphs, whose edges 
are colored by integers belonging to $\{0,\ldots,N\}$. 
Webs can be seen as morphisms in a pivotal category, which in the literature is 
called a {\em spider} or {\em spider category}, denoted 
$\mathcal{S}p(\mathrm{SL}_{N})$.  
\subsection{The $\mathrm{SL}_N$ spider}
\label{sec:spider}

Recently, Cautis, Kamnitzer and Morrison~\cite{ckm} gave a presentation 
of $\mathcal{S}p(\mathrm{SL}_{N})$ in terms of generating webs and relations. 

\begin{definition}[Cautis-Kamnitzer-Morrison]
\label{def:webrelations}
The objects of $\mathcal{S}p(\mathrm{SL}_N)$ are finite sequences 
$\vec{k}$ of elements in $\{0^{\pm},\ldots,(N)^{\pm}\}$. 

The hom-space $\mathrm{Hom}(\vec{k},\vec{l})$ is the 
$\mathbb{C}(q)$ vector space freely generated by all diagrams, with 
lower and top boundary labeled by $\vec{k}$ and $\vec{l}$ 
respectively, which can be obtained by glueing and juxtaposing 
labeled cups and caps and the following elementary webs, together with 
the ones obtained by mirror reflections and arrow reversals: 

\begin{eqnarray*}
\txt{\input{figure/diag26}}
\hspace{0.5cm}
\txt{\input{figure/diag27}}
\hspace{0.5cm}
\txt{\input{figure/diag1}}
\hspace{0.5cm}
\txt{\input{figure/diag2}}
\hspace{0.5cm}
\txt{\input{figure/diag4}}
\hspace{0.5cm}
\txt{\input{figure/diag3}}
\end{eqnarray*}
with all labels between $0$ and $N$,  
modded out by planar isotopies (i.e. zig-zag relations for cups and caps) 
and the following relations:
\begin{eqnarray}
\label{eq:tagswitch}\txt{\input{figure/diag4}}&\hspace{0.7cm}=&(-1)^{a(N-a)}\txt{\input{figure/diag5}}
\\[1em]
\label{eq:paralleldigon}\txt{\input{figure/diag12}}&\hspace{0.7cm}=&\begin{bmatrix}a+b\\a\end{bmatrix}_q\txt{\unitlength 0.1in
\begin{picture}(  1.0200,  6.0000)(  3.7300, -8.0000)
\special{pn 8}%
\special{pa 400 800}%
\special{pa 400 200}%
\special{fp}%
\special{sh 1}%
\special{pa 400 500}%
\special{pa 380 568}%
\special{pa 400 554}%
\special{pa 420 568}%
\special{pa 400 500}%
\special{fp}%
\put(6.0000,-5.0000){\makebox(0,0){${}_{b+a}$}}%
\end{picture}%
}
\\[1.5em]
\label{eq:oppositedigon}\txt{\input{figure/diag13}}&\hspace{0.7cm}=&\begin{bmatrix}N-a\\b\end{bmatrix}_q\txt{\unitlength 0.1in
\begin{picture}(  1.0200,  6.0000)(  3.7300, -8.0000)
\special{pn 8}%
\special{pa 400 800}%
\special{pa 400 200}%
\special{fp}%
\special{sh 1}%
\special{pa 400 500}%
\special{pa 380 568}%
\special{pa 400 554}%
\special{pa 420 568}%
\special{pa 400 500}%
\special{fp}%
\put(5.50000,-5.0000){\makebox(0,0){${}_{a}$}}%
\end{picture}%
}
\\[1em]
\label{eq:associativity}\txt{\input{figure/diag14}}&\hspace{0.7cm}=&\txt{\input{figure/diag15}}
\\[1em]
\label{eq:parallelsquare}\hspace{1cm}\txt{\input{figure/diag17}}\hspace{0.5cm}&\hspace{0.7cm}=&\begin{bmatrix}s+t\\t\end{bmatrix}_q\hspace{1cm}\txt{\input{figure/diag16}}
\\[1em]
\label{eq:oppositesquare}\hspace{1cm}\txt{\input{figure/diag18}}\hspace{0.5cm}&\hspace{0.7cm}=&\sum_r \begin{bmatrix}a-b+t-s\\r\end{bmatrix}_q\hspace{1cm}\txt{\input{figure/diag19}}
\end{eqnarray}
together with the analogous relations obtained by mirror reflections and 
arrow reversals. 
\end{definition}
Note that with tags one can invert the orientation of cups and caps, so one 
only needs the above cups and caps as generators.  
\vskip0.5cm
The following result can be found in~\cite{ckm} (Theorems 3.2.1 and 3.3.1):
\begin{theorem}[Cautis-Kamnitzer-Morrison]\label{thm:ckm}
There exists an equivalence of pivotal categories $\gamma_N\colon 
\mathcal{S}p(\mathrm{SL}_N)\to \mathcal{R}ep(\mathrm{SL}_N)$, 
which on objects is defined by  
\begin{eqnarray*}
\vec{k}=(k_1^{\epsilon_1},\ldots,k_m^{\epsilon_m})&\mapsto& 
\Lambda^{\vec{k}}_q(\C^N_q)=(\Lambda^{k_i}_q(\C^N_q))^{\epsilon_1}\otimes 
\cdots \otimes (\Lambda^{k_m}_q(\C^N_q))^{\epsilon_m},\\
\end{eqnarray*}  
where $V^1:=V$ and $V^{-1}:=V^*$. 
\end{theorem}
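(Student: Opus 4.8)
The plan is to build the functor $\gamma_N$ by hand, verify it respects the relations defining $\mathcal{S}p(\mathrm{SL}_N)$, and then show it is full and faithful; together with the evident compatibility with the pivotal structures this yields the asserted equivalence.

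First I would fix $\gamma_N$ on the generating morphisms. On objects it is as stated. The merge vertex with inputs $a,b$ and output $a+b$ goes to the multiplication $\Lambda^a_q(\C^N_q)\otimes\Lambda^b_q(\C^N_q)\to\Lambda^{a+b}_q(\C^N_q)$ of the quantum exterior algebra, the split vertex to a normalized quantum deconcatenation $\Lambda^{a+b}_q(\C^N_q)\to\Lambda^a_q(\C^N_q)\otimes\Lambda^b_q(\C^N_q)$, the cups and caps to the (co)evaluation intertwiners of $\mathcal{R}ep(\mathrm{SL}_N)$, and the tag to the canonical isomorphism $\Lambda^a_q(\C^N_q)^*\cong\Lambda^{N-a}_q(\C^N_q)$ induced by the pairing into $\Lambda^N_q(\C^N_q)\cong\C(q)$. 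Since all of these are explicit intertwiners, the only content of well-definedness is to check the images satisfy the zig-zag relations and the relations \eqref{eq:tagswitch}--\eqref{eq:oppositesquare} together with their mirror and arrow-reversal variants. Each is a finite computation in the representations: the digon and square identities reduce, via the straightening rules in $\Lambda^{\bullet}_q(\C^N_q)$ and the branching of $\Lambda^a_q(\C^N_q)\otimes\Lambda^b_q(\C^N_q)$, to $q$-binomial identities that can be verified directly.

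For fullness I would invoke quantum skew Howe duality. For $m$ large and $n$ fixed, $\Ugl{m}$ acts on $\bigoplus_{\vec k}\Lambda^{\vec k}_q(\C^N_q)$, the sum over compositions $\vec k$ of $n$ with all parts in $\{0,\dots,N\}$, with $E_i$ and $F_i$ acting by the ladder webs obtained by stacking a split on a merge vertex. By the double-commutant form of skew Howe duality, every ${\mathbf U}_q(\mathfrak{sl}_N)$-intertwiner between two such tensor products is a $\C(q)$-combination of operators coming from $\Ugl{m}$, hence of ladder webs; and the relations \eqref{eq:associativity}--\eqref{eq:parallelsquare} let one rewrite any web whose boundary points all upward as a combination of such ladders. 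So $\gamma_N$ is full on the full subcategory of upward objects. For a general object, the cups, caps and tags bend every dual strand to the top and reduce an arbitrary hom-space to one between upward objects; since $\gamma_N$ sends these bending webs to the matching (co)evaluation and duality morphisms of $\mathcal{R}ep(\mathrm{SL}_N)$, fullness follows in general.

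The hard part will be faithfulness, i.e.\ completeness of the relation list. I would prove $\dim_{\C(q)}\mathrm{Hom}_{\mathcal{S}p(\mathrm{SL}_N)}(\vec k,\vec l)\le\dim_{\C(q)}\mathrm{Hom}_{\mathcal{R}ep(\mathrm{SL}_N)}(\gamma_N\vec k,\gamma_N\vec l)$, which with fullness forces $\gamma_N$ to be an isomorphism on every hom-space. After the bending reduction it suffices to bound the span of ladder webs between two upward objects of total weight $n$. Here relations \eqref{eq:associativity}--\eqref{eq:parallelsquare} encode the quadratic, mixed $EF$ and Serre relations of $\Ugl{m}$ restricted to weights in $\{0,\dots,N\}^m$, so one aims to put every ladder into a normal form indexed by the combinatorial data labelling a basis of the ``Schur-type'' quotient $1_{\vec l}\Ugl{m}1_{\vec k}$ modulo the ideal killing out-of-box weights --- whose dimension equals $\dim\mathrm{Hom}_{\mathcal{R}ep(\mathrm{SL}_N)}(\gamma_N\vec k,\gamma_N\vec l)$, again by skew Howe duality. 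Producing this normal form, and in particular handling a general square move in all orientations (which is exactly where \eqref{eq:oppositesquare} with its sum over $r$ enters) and closing up internal circles, is the technical core. Finally $\gamma_N$ is monoidal since tensor product is juxtaposition on both sides and it matches duals and (co)evaluations with the tags, cups and caps by construction, so it is an equivalence of pivotal categories.
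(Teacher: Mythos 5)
This theorem is not actually proved in the paper: it is imported from Cautis--Kamnitzer--Morrison (their Theorems 3.2.1 and 3.3.1), so there is no in-paper argument to compare yours against. What you have written is a reasonable outline of the strategy used in that reference: define $\gamma_N$ on the generating webs via multiplication and comultiplication in the quantum exterior algebra together with the duality data for the tags, cups and caps; check the relations by reduction to $q$-binomial identities; obtain fullness from the surjectivity of $\dot{\mathbf U}_q(\mathfrak{gl}_m)$ onto the commutant supplied by quantum skew Howe duality, combined with the bending trick that turns an arbitrary hom-space into one between upward-oriented objects; and obtain faithfulness by bounding the spider hom-spaces above by the dimensions of the corresponding weight blocks of the Schur-type quotient of $\dot{\mathbf U}_q(\mathfrak{gl}_m)$. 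Two caveats. First, the faithfulness step --- showing that the listed relations really do reduce every web to a spanning set of the right cardinality, equivalently that the kernel of the evaluation functor is generated by the relations (in CKM's formulation, by the idempotents $1_{\vec{k}}$ with some $k_i\notin\{0,\ldots,N\}$) --- is precisely the hard content of the theorem, and your proposal names it as ``the technical core'' without carrying it out; as written this is a correct proof plan rather than a proof. Second, skew Howe duality is most naturally a $\mathfrak{gl}_N\times\mathfrak{gl}_m$ statement, while the hom-spaces in $\mathcal{R}ep(\mathrm{SL}_N)$ are $\mathfrak{sl}_N$-intertwiners, for which $\Lambda^0_q(\C^N_q)$ and $\Lambda^N_q(\C^N_q)$ are both trivial; bridging that gap is exactly what the tags and the determinant-colored edges are for, and your sketch should acknowledge that this requires a (short but nonempty) extra argument.
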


\begin{remark}
There is a slight discrepancy with the setup in~\cite{ckm}. We allow the 
colors $0$ and $N$ too, whereas Cautis, Kamnitzer and Morrison do not.  
Of course $\Lambda^0(\C_q^N)$ and $\Lambda^N(\C_q^N)$ are both isomorphic to 
the trivial ${\mathbf U}_q(\mathfrak{sl}_N)$-representation and one can 
decide to not draw edges labeled with them. However, for our purposes they 
are actually useful.    
\end{remark}


\subsection{Quantum skew Howe duality}
\label{sec:Howe}
Let us briefly recall the instance of quantum skew Howe duality 
which we will categorify. For more details, see~\cite{ckm} and the 
references therein. 
\vskip0.5cm
{\em For the rest of this paper, let $N\geq 2 $ and $m,d\geq 0$ be 
arbitrary integers.} 
\vskip0.5cm
Below we are only interested in $\mathfrak{gl}_m$-weights with 
entries between $0$ and $N$. Following~\cite{ckm}, we call these 
weights {\em $N$-bounded}. Let $\Lambda(m,d)_N$ denote the set of 
$N$-bounded $\mathfrak{gl}_m$-weights whose entries sum up to $d$, i.e. 
$$\Lambda(m,d)_N:=
\{\vec{k}\in \{0,\ldots,N\}^m\mid k_1+\cdots+k_m=d\}.$$
\begin{definition}
\label{def:slnglnweights} 
We define the map 
$$\phi_{m,d,N}\colon \Z^{m-1}\to \Lambda(m,d)_N\cup \{*\}$$ by 
\[
\phi_{m,d,N}(\lambda)=\begin{cases}
\vec{k},& \text{if~\eqref{eq:sl-gl-wts1} and \eqref{eq:sl-gl-wts2} 
have a solution in $\Lambda(m,d)_N$};\\
*,& \text{otherwise}.
\end{cases}
\]   
\end{definition}

The following Proposition is due to Cautis, Kamnitzer and Morrison and 
follows from Propositions 5.1.2 and 5.2.1 in~\cite{ckm}. In this proposition 
we consider $\U{m}$ as a $\C(q)$-category rather than an idempotented 
algebra (see Remark~\ref{rem:algebraversuscategory}). 
\begin{proposition}[Cautis-Kamnitzer-Morrison]
\label{prop:CKM}
The functor $$\gamma_{m,d,N}\colon 
\Uv{m}\to \mathcal{S}p(\mathrm{SL}_N)$$ determined by 
\begin{eqnarray*}
1_{\lambda}&\mapsto& 
\begin{cases}
\hspace{1cm}\txt{\input{figure/id-lambda}},&\hspace{1cm}\text{if}\;
\phi_{m,d,N}(\lambda)=\vec{k};\\
0,&\hspace{1cm}\text{if}\;\phi_{m,N,d}(\lambda)=*.
\end{cases}
\\[0.5em]
E_{+i}1_{\lambda}&\mapsto& \txt{\input{figure/e-mu-i-1}}\\
E_{-i}1_{\lambda}&\mapsto& \txt{\input{figure/f-mu-i-1}}\\
\end{eqnarray*} 
is well-defined.  
\end{proposition}

\begin{remark}
We label the vertical edges with the entries of $\vec{k}$ in 
the reverse order. This differs from the convention in~\cite{ckm}, but 
their results hold true with our convention too. The reason that we opted for 
this ``opposite'' convention, is that it permits us very easily to compare 
our $2$-representation $\Gamma_{m,N}$ in Theorem~\ref{thm:2-funct} to the 
$2$-representations in~\cite{kl3} and~\cite{msv}. Finding a consistent 
set of signs for the definition of such $2$-representations is very tricky, 
so we prefer to stick to the conventions in those two papers.  
\end{remark}

Note that by~\eqref{eq:paralleldigon} and~\eqref{eq:associativity}, 
it is easy to determine the images of the divided powers 
$$E_{+i}^{(a)}:=E_{+i}^a/[a]!\quad\text{and}\quad E_{-i}^{(a)}=E_{-i}^a/[a]!$$
\begin{eqnarray*}
E_{+i}^{(a)}1_{\lambda}&\mapsto& \txt{\input{figure/e-mu-i-a}}\\
E_{-i}^{(a)}1_{\lambda}&\mapsto& \txt{\input{figure/f-mu-i-a}}\\
\end{eqnarray*} 
\vskip0.5cm
Proposition~\ref{prop:CKM} singles out a special class of web diagrams, 
called {\em ladders}, which Cautis, Kamnitzer and Morrison defined in 
their Section 5. 
\begin{definition}[Cautis-Kamnitzer-Morrison]
\label{def:ladders}
An {\em $N$-ladder with $m$ uprights} is a rectangular $\mathfrak{sl}_N$-web 
diagram without tags,  
\begin{itemize}
\item whose vertical edges are all oriented upwards and lie on $m$ 
parallel vertical lines running from bottom to top; 
\item which contains a certain number of horizontal oriented {\em rungs} 
connecting adjacent uprights.
\end{itemize} 
\end{definition}
\noindent Since ladders do not have tags, at each trivalent vertex 
the sum of the labels of the incoming edges has to be equal to the sum of 
the labels of the outgoing edges. 

It is clear that any $N$-ladder with $m$ uprights whose labels add up to $d$, 
at any generic level between the rungs, is the image under 
$\gamma_{m,d,N}$ of a product of divided powers in $\U{m}$ by 
Proposition~\ref{prop:CKM}. 
\begin{remark}
In~\cite{ckm} the authors are very careful in distinguishing between a 
ladder, which for them is just a diagram in the ``free spider'', and its 
image in $\mathcal{S}p(\mathrm{SL}_N)$. In this paper that distinction is 
irrelevant and we use the term ``ladder'' more loosely to designate any 
web which can be represented by a ladder diagram in 
$\mathcal{S}p(\mathrm{SL}_N)$.
\end{remark}
\vskip0.5cm
Now suppose that $d=N\ell$, for some integer $\ell\geq 0$, and 
that $m\geq d$. Let $\Lambda=N\omega_{\ell}$ be 
$N$ times the $\ell$-th fundamental 
$\mathfrak{sl}_m$-weight. We denote by $P_{\Lambda}$ 
the set of $\mathfrak{sl}_m$-weights in the irreducible 
$\Uv{m}$-module $V_{\Lambda}$. Note that $\phi_{m,d,N}(P_{\Lambda})=\Lambda(m,d)_N$. 
In particular, we have 
$\phi_{m,d,N}(\Lambda)=(N^{\ell})\in \Lambda(m,d)_N$. 

\begin{definition}\label{def:webmodule}
Define the $\Uv{m}$-{\em web module} with highest weight $\Lambda$ by
$$W_{\Lambda}:=\bigoplus_{\vec{k}\in \Lambda(m,d)_N}W(\vec{k},N),$$
where $W(\vec{k},N)$ is the {\em web space} defined by  
$$W(\vec{k},N):=\mathrm{Hom}((N^{\ell}), \vec{k}).$$   
\end{definition}

Let $\vec{k},\vec{k}'\in \Lambda(m,d)_N$. Any web in 
$\mathrm{Hom}(\vec{k},\vec{k}')$ defines a linear map 
$W(\vec{k},N)\to W(\vec{k}',N)$ by glueing it on top of the 
webs in $W(\vec{k},N)$. Therefore, the homomorphism $\gamma_{m,d,N}$ 
induces a well-defined action of $\U{m}$ on $W_{\Lambda}$. We are going 
to show that $W_{\Lambda}$ is an irreducible $\U{m}$-representation. 
\vskip0.5cm
For any $u\in W(\vec{k},N)$, 
let $$u^*\in \mathrm{Hom}(\vec{k},(N^{\ell}))$$ be the web obtained via 
reflexion in the $x$-axis and reorientation. Note that $u$ and $u^*$ can 
be glued together such that $u^*u\in \mathrm{End}((N^{\ell}))$. 

Let 
$$\mathrm{ev}\colon \mathrm{End}((N^{\ell}))\to\C(q)$$ 
denote the 
isomorphism given by the evaluation of closed webs (forgetting about the 
edges labeled $N$). We can define a 
$q$-sesquilinear form 
on $W(\vec{k},N)$, which we call the {\em $q$-sesquilinear web form}, by
\begin{equation}
\label{eq:webbracket}
\langle u, v\rangle:= q^{d(\vec{k})}\mathrm{ev}(u^*v)\in \C(q).
\end{equation}
for any pair of monomial webs $u,v\in W(\vec{k},N)$. 
The normalization factor is defined by  
$$d(\vec{k})=1/2\left(N(N-1)\ell-\sum_{i=1}^{m}k_i(k_i-1)\right).$$
We assume that the web form is $q$-antilinear in the first entry and 
$q$-linear in the second. 
\vskip0.5cm

Before we state the following result, we recall 
the $q$-antilinear algebra anti-involution 
$\tau$ on $\Uv{m}$ which is defined by 
\[
\tau(1_{\lambda})=1_{\lambda},\;\;
\tau(1_{\lambda+i'}E_{+i}1_{\lambda})= 
q^{-1-\lambda_i}1_{\lambda}E_{-i}1_{\lambda+i'},\]
\[
\tau(1_{\lambda}E_{-i}1_{\lambda+i'})= q^{1+\lambda_i}
1_{\lambda+i'}E_{+i}1_{\lambda}.
\] 
Let $V_{\Lambda}$ be the irreducible $\Uv{m}$-representation with highest 
weight $\Lambda$, which is unique up to isomorphism. 
The $q$-{\em Shapovalov form} $\langle\cdot,\cdot\rangle$ on 
$V_{\Lambda}$ is the unique $q$-sesquilinear form such that 
\begin{enumerate}
\item $\langle v_{\Lambda},v_{\Lambda}\rangle =1$, for a fixed 
highest weight vector $v_{\Lambda}$;
\item $\langle x v, v' \rangle=\langle v,\tau(x) v'\rangle$, for any 
$x\in \U{m}$ and any $v,v'\in V_{\Lambda}$.
\end{enumerate}  
We recall that this form is non-degenerate.

\begin{corollary}
\label{cor:webirrep}
$W_{\Lambda}$ is an irreducible $\Uv{m}$ 
representation with highest weight $\Lambda$. The $q$-sesquilinear 
web form is equal to the $q$-Schapovalov form. 
\end{corollary}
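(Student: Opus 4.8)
The plan is to combine Proposition~\ref{prop:CKM}, which gives a $\Uv{m}$-action on $\mathcal{S}p(\mathrm{SL}_N)$ and hence on $W_\Lambda$, with the standard characterization of the irreducible highest weight module and its Shapovalov form. First I would observe that the highest weight space $W((N^\ell),N)=\mathrm{Hom}((N^\ell),(N^\ell))$ contains the identity web, which is a highest weight vector: applying $E_{+i}$ to it, for any $i$, yields a web with a rung labeled $1$ emanating from an edge labeled $N$ on its left and producing an edge labeled $N+1>N$ on top, hence is zero. Moreover one checks that the weight of the identity web is exactly $\Lambda$, since $\phi_{m,d,N}(\Lambda)=(N^\ell)$. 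Thus there is a nonzero $\Uv{m}$-equivariant map $V_\Lambda\to W_\Lambda$ sending $v_\Lambda$ to the identity web. Surjectivity follows because every web in every $W(\vec k,N)$ is, by Proposition~\ref{prop:CKM} and the discussion of ladders, a $\C(q)$-linear combination of images of divided powers $E_{\pm i}^{(a)}$ applied to the identity web (here one uses that $m\geq d$, so every $N$-bounded $\mathfrak{gl}_m$-weight of total weight $d$ is reachable from $(N^\ell)$ by the $\mathfrak{sl}_m$-action, together with relations~\eqref{eq:paralleldigon} and~\eqref{eq:associativity} to express arbitrary ladder webs through divided powers). Hence $W_\Lambda$ is a quotient of $V_\Lambda$.

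To get the other direction and conclude irreducibility, I would show the $q$-sesquilinear web form~\eqref{eq:webbracket} is nondegenerate and satisfies the two defining properties of the $q$-Shapovalov form; nondegeneracy of the web form then forces the kernel of $V_\Lambda\to W_\Lambda$ to be trivial (it must be contained in the radical of the pulled-back form, which by property (2) below is $\Uv{m}$-invariant and, by nondegeneracy of the web form on the image, must be zero), so $W_\Lambda\cong V_\Lambda$ and is irreducible. Property (1), $\langle u_{\mathrm{id}},u_{\mathrm{id}}\rangle=1$, is a direct evaluation: $d((N^\ell))=\tfrac12(N(N-1)\ell - \ell N(N-1))=0$ and $\mathrm{ev}$ of the closed web $u_{\mathrm{id}}^*u_{\mathrm{id}}$ (a disjoint union of $N$-labeled circles, which are forgotten) equals $1$. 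Property (2), the adjunction $\langle x v,v'\rangle = \langle v,\tau(x)v'\rangle$ for $x\in\Uv{m}$, is the technical heart: it suffices to check it for $x=E_{\pm i}1_\lambda$, which amounts to verifying that the web representing $E_{+i}1_\lambda$, when moved across the pairing $q^{d(\vec k)}\mathrm{ev}(u^*v)$ by bending it around, becomes $q^{-1-\lambda_i}$ times the web representing $E_{-i}1_{\lambda+i'}$, matching the formula for $\tau$. This is where the normalization factor $q^{d(\vec k)}$ and the precise power $q^{\pm(1+\lambda_i)}$ in the definition of $\tau$ must conspire: one computes the change in $d(\vec k)$ under $\vec k\mapsto \vec k\pm(\text{unit vector difference})$ and compares with the scalar picked up when a rung is bent around a cap/cup using the zig-zag relations and the tag relation~\eqref{eq:tagswitch} (which contributes a sign $(-1)^{a(N-a)}$ that one checks cancels or is absorbed appropriately). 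I expect this bookkeeping of $q$-powers and signs in property (2) to be the main obstacle.

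Finally, having identified $W_\Lambda$ with $V_\Lambda$ as a $\Uv{m}$-module and shown the web form is a $q$-sesquilinear form on it satisfying (1) and (2), the uniqueness clause in the definition of the $q$-Shapovalov form gives that the two forms coincide. This simultaneously re-establishes nondegeneracy of the web form (since the Shapovalov form is nondegenerate, as recalled) and completes the proof of the corollary. I would present the argument in the order: (i) identify the highest weight vector and its weight; (ii) prove surjectivity of $V_\Lambda\to W_\Lambda$ using ladders; (iii) verify properties (1) and (2) of the web form; (iv) deduce injectivity and hence irreducibility, and invoke uniqueness of the Shapovalov form.
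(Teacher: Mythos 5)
Your treatment of the Shapovalov form is essentially the paper's own argument: you check $\langle w_\Lambda,w_\Lambda\rangle=1$ and verify the adjunction $\langle E_{\pm i}1_\lambda u,v\rangle=\langle u,\tau(E_{\pm i}1_\lambda)v\rangle$ by computing how $d(\vec k)$ changes under $\vec k\mapsto\vec k\pm(\epsilon_i-\epsilon_{i+1})$. The bookkeeping you worry about is milder than you fear: since the webs representing $E_{\pm i}1_\lambda$ are ladders and carry no tags, the sign $(-1)^{a(N-a)}$ from~\eqref{eq:tagswitch} never enters, and the identity $(E_{+i}1_\lambda u)^*v=u^*(1_\lambda E_{-i}v)$ is immediate from the definition of $u^*$; all that remains is the elementary computation $d(\vec k')=d(\vec k)-1-(k_i-k_{i+1})$.

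The irreducibility argument, however, has a genuine gap. You deduce surjectivity of $V_\Lambda\to W_\Lambda$ from the claim that every web in $W(\vec k,N)$ is a $\C(q)$-linear combination of images of products of divided powers applied to $w_\Lambda$, i.e.\ a linear combination of $N$-ladders with $m$ uprights. That claim is not available at this point: the morphisms of $\mathcal{S}p(\mathrm{SL}_N)$ are arbitrary trivalent webs built from cups, caps, tags and trivalent vertices, and the paper explicitly remarks, immediately after this corollary, that for $N>3$ it is not known whether a monomial web in $W(\vec k,N)$ can be represented by a ladder with $m$ uprights, and that CKM's Theorem 3.5.1 cannot be invoked because its algorithm produces ladders with more than $m$ uprights. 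Indeed, the statement that every monomial web in $W_\Lambda$ is a linear combination of ladders is derived in the paper as a \emph{consequence} of this corollary, so your argument is circular. (Your relations~\eqref{eq:paralleldigon} and~\eqref{eq:associativity} only show that ladders are images of divided powers, not that arbitrary webs reduce to ladders.) The paper sidesteps this with a dimension count: $\dim W(\vec k,N)=\dim\,\mathrm{Inv}(V(\vec k,N))$ by the equivalence $\mathcal{S}p(\mathrm{SL}_N)\simeq\mathcal{R}ep(\mathrm{SL}_N)$ (CKM Theorem 3.3.1), and summing over $\vec k$ gives $\dim W_\Lambda=\dim V_\Lambda$ by quantum skew Howe duality (CKM Theorem 4.2.1(3)); since the submodule generated by the highest weight vector $w_\Lambda$ is already a copy of $V_\Lambda$, equality of dimensions forces $W_\Lambda\cong V_\Lambda$. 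You should replace your surjectivity step by this dimension argument; proving directly that arbitrary webs reduce to linear combinations of ladders with $m$ uprights would be a new result for $N>3$.
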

\begin{proof}
Note that $E_{+i}W(\Lambda,N)=\{0\}$ for 
any $i=1,\ldots, m-1$, so the web of $\ell$ parallel vertical upward oriented 
$N$-edges, denoted $w_{\Lambda}$, is a highest weight vector of weight $\Lambda$. 

By Theorem 3.3.1 in~\cite{ckm}, we see that 
$$\dim\; W(\vec{k},N)=\dim\; \mathrm{Inv}(V(\vec{k},N))$$
for any $\vec{k}\in \phi_{m,d,N}(P_{\Lambda})$. By Theorem 4.2.1(3) in the same 
paper, this implies that 
$$\dim\; W_{\Lambda}=\dim\; V_{\Lambda}.$$ 

The first statement in the corollary now follows by the uniqueness up to 
isomorphism of irreducible highest weight $\U{m}$-modules.   
\vskip0.5cm
The web form clearly satisfies $\langle w_{\Lambda},w_{\Lambda}\rangle=1$. Note 
that from the definition of the action it follows immediately that  
$$(E_{+i}1_{\lambda}u)^*v=u^*(1_{\lambda}E_{-i}v),$$
for any $i=1,\ldots,N-1$ and any $\lambda\in \Z^{N-1}$. 
Let us assume that $\phi_{m,d,N}(\lambda)=\vec{k}$ (otherwise there is nothing to 
prove) and $\phi_{m,d,N}(\lambda+i')=\vec{k}'$. Then 
$$
\vec{k}'=(k_1,\ldots,k_i+1,k_i-1,\ldots,k_m),
$$
so 
$$
\sum_{j=1}^m k_j'(k'_j-1)=\sum_{j=1}^m k_j(k_j-1)+2
+2(k_i-k_{i+1}),
$$
which implies 
$$
d(\vec{k}')=d(\vec{k})-1-(k_i-k_{i+1}).
$$
The normalization of the web form then gives
$$\langle E_{+i}1_{\lambda}u,v\rangle=q^{d(\vec{k}')}
\mathrm{ev}((E_{+i}1_{\lambda}u)^*v)$$
and 
$$\langle u,\tau(E_{+i}1_{\lambda})v\rangle=q^{-1-(k_i-k_{i+1})}
\langle u,1_{\lambda}E_{-i}v\rangle=$$
$$q^{d(\vec{k})-1-(k_i-k_{i+1})}\mathrm{ev}(u^*(1_{\lambda}E_{-i}v)).$$
This shows that 
$$\langle E_{+i}1_{\lambda}u,v\rangle=\langle u,\tau(E_{+i}1_{\lambda})v\rangle.$$
Similarly, one can check that 
$$\langle E_{-i}1_{\lambda}u,v\rangle=\langle u,\tau(E_{-i}1_{\lambda})v\rangle.$$
Since any element in $\Uv{m}$ is a linear combination of products of 
$E_{+i}$'s and $E_{-i}$'s, this shows that the web form satisfies the defining 
axioms of the $q$-Shapovalov form. 
\end{proof}

Corollary~\ref{cor:webirrep} shows that any monomial web in $W_{\Lambda}$ 
is equal to a linear combination of $N$-ladders with $m$ uprights. The 
following corollary shows that ladders are special. 

\begin{corollary}
\label{cor:positivityforclosedwebs}
Let $u\in W(\Lambda,N)$ be any $N$-ladder with $m$ uprights. Then there 
exists an $\alpha\in\N[q,q^{-1}]$ such that 
$$u=\alpha w_{\Lambda}.$$
\end{corollary}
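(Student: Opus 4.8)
The statement says that an $N$-ladder $u$ with $m$ uprights, viewed as an element of $W(\Lambda,N)=\mathrm{Hom}((N^\ell),\vec k)$ with $\vec k = \Lambda$, i.e. with both bottom and top boundary equal to $(N^\ell)$, must be a scalar multiple of the identity web $w_\Lambda$ with the scalar lying in $\N[q,q^{-1}]$. The plan is to exploit two facts already established: first, that $w_\Lambda$ spans the one-dimensional weight space $W(\Lambda,N)$ (this is implicit in the proof of Corollary~\ref{cor:webirrep}, since $W_\Lambda$ is irreducible of highest weight $\Lambda$ and the $\Lambda$-weight space of $V_\Lambda$ is one-dimensional); and second, that the web form is the $q$-Shapovalov form, which is nondegenerate and satisfies $\langle w_\Lambda, w_\Lambda\rangle = 1$. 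From the first fact alone we already get $u = \alpha w_\Lambda$ for some $\alpha\in\C(q)$; the entire content of the corollary is the \emph{positivity} claim $\alpha\in\N[q,q^{-1}]$.

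To pin down $\alpha$ and prove positivity, first I would compute $\alpha = \langle w_\Lambda, u\rangle = \langle w_\Lambda, \alpha w_\Lambda\rangle$ using $\langle w_\Lambda, w_\Lambda\rangle = 1$. By~\eqref{eq:webbracket}, $\langle w_\Lambda, u\rangle = q^{d(\Lambda)}\mathrm{ev}(w_\Lambda^* u)$, and since $w_\Lambda$ is just $\ell$ parallel vertical $N$-labelled strands, $w_\Lambda^* u$ is the web obtained from $u$ by capping off top and bottom with $N$-labelled cups and caps — that is, closing up the ladder into a closed web in $\mathrm{End}((N^\ell))$ which is then evaluated to a scalar in $\C(q)$ via $\mathrm{ev}$. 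Thus $\alpha = q^{d(\Lambda)}\mathrm{ev}(\overline u)$ where $\overline u$ denotes the closure of the ladder $u$.

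The main step is then to show this closed-web evaluation lies in $\N[q,q^{-1}]$. I would argue by induction on the number of rungs of the ladder $u$. A ladder with no rungs is $w_\Lambda$ itself (the boundary forces all uprights labelled $N$), giving $\alpha = 1$. For the inductive step, a single rung in the ladder forms, together with neighbouring edges, a local configuration; closing up and then evaluating, one repeatedly reduces using the digon relations~\eqref{eq:paralleldigon} and~\eqref{eq:oppositedigon} — whose coefficients are quantum binomial coefficients $\begin{bmatrix}a+b\\a\end{bmatrix}_q$ and $\begin{bmatrix}N-a\\b\end{bmatrix}_q$, both manifestly in $\N[q,q^{-1}]$ — and the square relations~\eqref{eq:parallelsquare} and~\eqref{eq:oppositesquare}, whose coefficients $\begin{bmatrix}s+t\\t\end{bmatrix}_q$ and $\begin{bmatrix}a-b+t-s\\r\end{bmatrix}_q$ are sums of terms in $\N[q,q^{-1}]$ as well (one must check the relevant binomials have non-negative top entries, which is guaranteed here since all edge labels lie in $\{0,\dots,N\}$ and the boundary is $(N^\ell)$). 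Since a rung of a ladder whose boundary is $(N^\ell)$ can always be absorbed into a digon or traded across a square against adjacent rungs, each such move strictly decreases the rung count (or the complexity) while multiplying the scalar by an element of $\N[q,q^{-1}]$. The final normalization factor $q^{d(\Lambda)}$ is a monomial with exponent $d(\Lambda) = \tfrac12 N(N-1)\ell \geq 0$, hence also in $\N[q]$, so the product $\alpha = q^{d(\Lambda)}\mathrm{ev}(\overline u)$ remains in $\N[q,q^{-1}]$.

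The hard part will be the bookkeeping in the inductive step: making precise the claim that in a ladder closed up over the boundary $(N^\ell)$, the topmost (or bottommost) rung can always be reduced via one of the CKM relations without the scalar ever acquiring a negative coefficient or a negative entry in a quantum binomial. One clean way to organize this is to note that any $N$-ladder with all uprights equal to $N$ at top and bottom is, by Proposition~\ref{prop:CKM} and Corollary~\ref{cor:webirrep}, the image of an element $x\in 1_\Lambda\U{m}1_\Lambda$ acting on $w_\Lambda$; since the $\Lambda$-weight space is one-dimensional, $x\cdot w_\Lambda = \alpha w_\Lambda$, and $\alpha$ is computed by the Shapovalov pairing, which for products of divided powers of Chevalley generators applied to a highest weight vector is known to lie in $\N[q,q^{-1}]$ (this is the standard positivity of matrix coefficients in the canonical/crystal basis picture, or can be seen directly from the ladder calculus above). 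I expect either route to work; the diagrammatic induction is more self-contained, so I would present that, flagging the quantum-group interpretation as the conceptual explanation of why positivity holds.
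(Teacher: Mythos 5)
Your closing remark is in fact the paper's entire proof: by Proposition~\ref{prop:CKM} an $N$-ladder in $W(\Lambda,N)$ is the image of a product of divided powers of $E_{\pm i}$ applied to $w_{\Lambda}$, and positivity of the coefficient then follows from Lusztig's positivity theorem for based modules (Theorem 22.1.7 in~\cite{lu}) together with the fact that $w_{\Lambda}$ is the unique canonical basis element of weight $\Lambda$ in the one-dimensional $\Lambda$-weight space. That argument is complete in three lines, and you should promote it from a parenthetical to the proof. (One small caveat even there: you phrase it via the Shapovalov pairing, which is $q$-antilinear in the first slot; since $\alpha=\langle w_\Lambda,\alpha w_\Lambda\rangle$ uses only linearity in the second slot this causes no harm, but the cleaner statement is simply that a product of divided powers sends the $\N[q,q^{-1}]$-span of the canonical basis into itself.)

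The diagrammatic induction you say you would actually present has a genuine gap. The load-bearing claim is that every reduction step multiplies the scalar by an element of $\N[q,q^{-1}]$, and you assert that the relevant quantum binomials ``have non-negative top entries, which is guaranteed since all edge labels lie in $\{0,\dots,N\}$.'' That is not guaranteed: in the square-switch relation~\eqref{eq:oppositesquare} the coefficient is $\begin{bmatrix}a-b+t-s\\r\end{bmatrix}_q$, and nothing prevents $a-b+t-s<0$ (take $a=1$, $b=2$, $s=t=1$, where the $r=1$ term carries coefficient $[-1]_q=-1$). For negative top entry the quantum binomial is $(-1)^r$ times a positive one, so genuinely negative coefficients occur at intermediate stages, and proving that they cancel in the final evaluation is essentially the positivity statement you are trying to establish --- circular. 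Separately, the assertion that the topmost rung of a closed ladder ``can always be absorbed into a digon or traded across a square'' with strictly decreasing complexity is not proved and is precisely the hard combinatorial content; no such terminating, positivity-preserving rewriting procedure for $\mathfrak{sl}_N$-ladders is available for general $N$ (the paper's own remark following this corollary signals how poorly the web combinatorics is understood beyond $N=3$). So the ``self-contained'' route is not actually available, while the canonical-basis route you flagged as mere conceptual background is the correct and intended proof.
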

\begin{proof}
Since $u$ is a ladder, Proposition~\ref{prop:CKM} shows that it 
can be obtained from $w_{\Lambda}$ by applying a product of 
divided powers of $E_{\pm i}$'s. 
The result now follows from Theorem 
22.1.7 in~\cite{lu} and the fact that $w_{\Lambda}$ is the unique 
canonical basis element of weight $\Lambda$.
\end{proof}

\begin{remark}
A good question is whether any monomial web $w\in W(\vec{k},N)$ can be 
represented as an $N$-ladder with $m$ uprights. For $N=2$ and $N=3$ 
the answer is affirmative. For $N=2$ this is immediate and for $N=3$ 
this was proved in Lemma 5.2.10 in~\cite{mpt}. For $N>3$ we do not know the 
answer. Theorem 3.5.1 in~\cite{ckm} cannot be applied, 
at least not immediately, because it 
only shows that $w$ is isomorphic to the image of an $N$-ladder with 
a ``certain number'' of uprights after removing the edges labeled $0$ and $N$.  
The algorithm in the proof of that theorem will give a ladder with 
more than $m$ uprights in general.  
\end{remark}
%
%
%
%
\section{The categorification of webs}
\label{mf}
Wu~\cite{wu} and independently Y.~Y~\cite{yo1,yo2} defined matrix 
factorizations associated to colored 
$\mathfrak{sl}_N$ webs, generalizing the ground breaking work of 
Khovanov and Rozanksy~\cite{kr}. In this section we recall 
these matrix factorizations, which we will use to define our web categories 
in Definition~\ref{def:web-category}.
\subsection{Partially symmetric polynomials}
\indent
Let $k$ and $r$ be two integers. We define  
$$\mathbb{T}^k_r:=\{t_{1,r},...,t_{|k|,r}\},$$
where the $t_{i,r}$ are certain formal variables, which are graded by putting 
$\deg(t_{i,r})=2$, for all $i=1,\ldots,k$. The integer $r$ just serves as 
an index, which will be useful later. 
 
Denote the elementary symmetric and the complete 
symmetric functions in $\mathbb{T}^k_r$ by 
$$e_{0,r}=1,e_{1,r},...,e_{k,r}\quad\text{and}\quad 
h_{0,r}=1,h_{1,r},\ldots,h_{k,r},\ldots$$
respectively. 

Write
$$
s(k)=\left\{ \begin{array}{cc} 1&k\geq 0\\-1&k<0 \end{array} \right. .
$$
Let $m$ be a non-negative integer and 
let $\vec{k}=(k_1,k_2...,k_m)$ and $\vec{r}=(r_1,\ldots,r_m)$ be $m$-tuples of 
integers and define $|\vec{k}|:=\sum_{i=a}^m |k_a|$. We write 
$$
\mathbb{T}^{\vec{k}}_{\vec{r}}:=\mathbb{T}^{k_1}_{r_1}\cup\cdots\cup 
\mathbb{T}^{k_m}_{r_m}.$$
Let 
$$R^{\vec{k}}_{\vec{r}}:=\mathbb{S}(\mathbb{T}^{k_1}_{r_1}
\vert\mathbb{T}^{k_2}_{r_2}\vert \cdots
\vert\mathbb{T}^{k_m}_{r_m}),$$
which is the ring of partially symmetric polynomials which are symmetric in 
each $\mathbb{T}^{k_a}_{r_a}$ separately for $a=1,\ldots,m$. 

We define the rational function $X^{\vec{k}}_{\vec{r}}$ in the alphabet 
$\mathbb{T}^{\vec{k}}_{\vec{r}}$ by 
$$
X^{\vec{k}}_{\vec{r}}:=\prod_{a=0}^{m}\left(\sum_{b=0}^{|k_{a}|}e_{b,r_{a}}\right)^{s(k_{a})}.
$$
Expand $X^{\vec{k}}_{\vec{r}}$ as a power series and let 
$X_{j,\vec{r}}^{\vec{k}}$ ($j\in\Z_{\geq 0}$) be the polynomial given by the 
homogeneous summand of $X^{\vec{k}}_{\vec{r}}$ of degree $2j$. Note that 
$$X_{j,\vec{r}}^{\vec{k}}\in R^{\vec{k}}_{\vec{r}}$$
for all $j\in \Z_{\geq 0}$. 
\begin{example}
Let $m=1$ and $k\geq 0$ ($r$ is just an index, which is not important here), 
then 
$$X_{j,r}^k=e_{j,r}\quad\text{and}\quad X_{j,r}^{-k}=(-1)^jh_{j,r}$$ 
for $j=1,\ldots,k$.  
\end{example}
Define  
$$\mathbb{X}^{\vec{k}}_{\vec{r}}:=\left\{X_{j,\vec{r}}^{\vec{k}}\mid 
0\leq j\leq |\vec{k}|\right\},$$ 
which we treat as an alphabet in its own right. 
\vskip0.5cm
Fix $N\geq 2$ and let 
$$p_{N+1}(\mathbb{T}^k_r):=t_{1,r}^{N+1}+t_{2,r}^{N+1}+...+t_{k,r}^{N+1}.$$
Define $P_{N+1}(\mathbb{X}^{\vec{k}}_{\vec{r}})$ by 
$$
P_{N+1}(\mathbb{X}^{\vec{k}}_{\vec{r}}):=
P_{N+1}(\mathbb{X}^{k_1}_{r_1})+P_{N+1}(\mathbb{X}^{k_2}_{r_2})+...+
P_{N+1}(\mathbb{X}^{k_{m}}_{r_m})
$$
where 
$$
P_{N+1}(\mathbb{X}^{k_a}_{r_a}):=p_{N+1}(\mathbb{T}^{k_a}_{r_a}) 
$$
for each $a=1,\ldots,m$. 
\begin{example}
Let $m=1$ and $k,N=2$. Then 
$$p_{3}(t_1,t_2)=t_1^3+t_2^3=(t_1+t_2)^3-
3(t_1+t_2)t_1t_2,$$ 
so 
$$P_3(e_1,e_2)=e_1^3-3e_1e_2.$$
\end{example}

%
%
%
%
\subsection{Graded matrix factorizations}
Let $R=\C[X_1,...,X_k]$ and put a grading on $R$ by taking $\deg(X_i)$ to 
be an even positive integer for each $i=1,\ldots,k$. Let $P$ be a 
polynomial in $R$.

\begin{definition}
\label{def:matrixfactor}
A {\em graded matrix factorization with potential $P$} is a 
4-tuple 
$$\widehat{M}=(M_0,M_1,d_{M_0},d_{M_1})$$ 
such that 
\begin{equation*}
\xymatrix{M_0\ar[r]^{d_{M_0}}&M_1\ar[r]^{d_{M_1}}&M_0},
\end{equation*}
is a $2$-chain of free graded $R$-modules (possibly of infinite rank) such 
that  
$$\deg(d_{M_0})=\deg(d_{M_1})=\frac{1}{2}\,\deg (P)$$ 
and  
$$d_{M_1} d_{M_0} = P\,\id_{M_0}\quad\text{\and}\quad 
d_{M_0} d_{M_1} = P\,\id_{M_1}.$$
\end{definition}

\begin{definition}
A matrix factorization $\widehat{M}=(M_0,M_1,d_{M_0},d_{M_1})$ is finite if as an $R$-module $\rank(M_0)\,(=\rank(M_1))<\infty$. 
\end{definition}

We define a {\em grading shift} $\{ m \}$ ($m \in \Z$) and a {\em translation} 
$\langle 1\rangle$ on $\widehat{M}=(M_0,M_1,d_{M_0},d_{M_1})$ by
\begin{eqnarray*}
\widehat{M}\{m\}&=&(M_0\{ m \},M_1\{ m\},d_{M_0},d_{M_1})\\
\widehat{M}\langle 1\rangle&=&(M_1,M_0,-d_{M_1},-d_{M_0}).
\end{eqnarray*}
\indent
A {\em morphism $f:\widehat{M}\longrightarrow\widehat{N}$ of matrix 
factorizations} 
is a pair of degree preserving $R$-module morphisms 
$f_0:M_0\to N_0$ and $f_1:M_1\to N_1$ such that
\begin{eqnarray}
\nonumber
d_{N_0}f_0=f_1d_{M_0},\quad d_{N_1}f_1=f_0d_{M_1}.
\end{eqnarray}
A morphism $f:\widehat{M}\longrightarrow\widehat{N}$ of matrix 
factorizations is {\em null-homotopic} if there exists a pair of 
$R$-module morphisms $h_0:M_0\to N_1$ and $h_1:M_1\to N_0$ such that
\begin{eqnarray}
\nonumber
f_0=d_{N_1}h_0+h_1d_{M_0},\quad f_1=d_{N_0}h_1+h_0d_{M_1}.
\end{eqnarray}
Two such morphisms $f,g$ are {\em homotopic} if $f-g$ is 
null-homotopic. 

Let $\HMF_R(P)$ be the homotopy category of matrix factorizations with 
potential $P$. This is an additive Krull-Schmidt category (see Propositions 
24 and 25 in~\cite{kr}). Recall that $\HMF_R^*(P)$, which was defined in 
Section~\ref{sec:conventions}, contains all homogeneous morphisms. 
\vskip0.5cm

Let $\mathbb{X}$ and $\mathbb{Y}$ be two sets of variables and put 
$\mathbb{U}=\mathbb{X}\cap \mathbb{Y}$ and 
$\mathbb{V}=\mathbb{X}\cup\mathbb{Y}$.
Take $R=\C[\mathbb{X}]$, $R'=\C[\mathbb{Y}]$ and $S=\C[\mathbb{U}]$ and 
$Q=\C[\mathbb{V}]$. Note that 
$$Q=R\otimes_S R'.$$
For $\widehat{M}=(M_0,M_1,d_{M_0},d_{M_1})$ in $\HMF_{R}(P)$ and 
$\widehat{N}=(N_0,N_1,d_{N_0},d_{N_1})$ in $\HMF_{R'}(P')$, 
we define the tensor product 
$\widehat{M} \btime{S}\widehat{N}$ in $\HMF_{Q}(P+P')$ 
by 
\begin{gather*}
\widehat{M} \btime{S}\widehat{N}:=\\
\left(
\left(\begin{array}{c}
		M_0\ostimes N_0\\
		M_1\ostimes N_1
	\end{array}
\right),
\left(\begin{array}{c}
		M_1\ostimes N_0\\
		M_0\ostimes N_1
	\end{array}
\right),
\left(
	\begin{array}{cc}
		d_{M_0}&-d_{N_1}\\
		d_{N_0}&d_{M_1}
	\end{array}
\right),
\left(
	\begin{array}{cc}
		d_{M_1}&d_{N_1}\\
		-d_{N_0}&d_{M_0}
	\end{array}
\right)\right)
.
\end{gather*}
If no confusion is possible, we will write 
$\widehat{M} \boxtimes\widehat{N}$. 
\begin{example}
Let $p$ and $q$ be two homogeneous polynomials in a graded polynomial ring 
$R$ and let $M$ be a free graded $R$-module. We define the matrix factorization 
$K(p;q)_{M}$ with potential $p q$ by
\begin{eqnarray}
\nonumber
K(p;q)_{M}&:=& 
(M,M\{ \frac{1}{2}(\, \deg (q)-\deg (p)\, )\},p,q)
.
\end{eqnarray}
\indent
More generally, for sequences $\mathbf{p}=(p_1, p_2, ..., p_r)$, 
$\mathbf{q}=(q_1, q_2, ..., q_r)$ of homogeneous polynomials in $R$, 
we define the matrix factorization 
$K\left( \mathbf{p} ; \mathbf{q} \right)_{M}$ with 
potential $\sum_{i=1}^r p_i q_i$ by 
\begin{eqnarray*}
K\left( \mathbf{p} ; \mathbf{q} \right)_{M}
&:=&
\btime{\,\,R}_{i=1}^r K(p_i;q_i)_{R}\btime{R}(M,0,0,0)
.
\end{eqnarray*}
These matrix factorizations are called 
{\em Koszul matrix factorizations}~\cite{kr}.
\end{example}
%
%
%

\subsection{The $2$-complex $\HOM_R(\widehat{M},\widehat{N})$}
We define the structure of a $2$-complex on $\HOM_R(\widehat{M},\widehat{N})$ 
by
\begin{equation}
\nonumber
\xymatrix{
\HOM_R^0(\widehat{M},\widehat{N})\ar[r]^{d_0}&\HOM_R^1(\widehat{M},\widehat{N})\ar[r]^{d_1}&\HOM_R^0(\widehat{M},\widehat{N}),
}
\end{equation}
where
\begin{eqnarray}
\nonumber
\HOM_R^0(\widehat{M},\widehat{N})=\HOM_R(M_0,N_0)\oplus\HOM_R(M_1,N_1),\\
\nonumber
\HOM_R^1(\widehat{M},\widehat{N})=\HOM_R(M_0,N_1)\oplus\HOM_R(M_0,N_1),
\end{eqnarray}
and
\begin{equation}
\nonumber
d_i(f)=d_N\, f+(-1)^if\,d_M\quad (i=0,1).
\end{equation}
The cohomology of this complex is denoted by
\begin{equation}
\nonumber
\EXT(\widehat{M},\widehat{N})=\EXT^0(\widehat{M},\widehat{N})\oplus\EXT^1(\widehat{M},\widehat{N}).
\end{equation}
By definition, we have the following proposition.
\begin{proposition}\label{ext-hom}
We have
\begin{eqnarray*}
\EXT^0(\widehat{M},\widehat{N})&\simeq&\HOM_{\HMF}(\widehat{M},\widehat{N}),\\
\EXT^1(\widehat{M},\widehat{N})&\simeq&\HOM_{\HMF}(\widehat{M},\widehat{N}\langle 1\rangle).
\end{eqnarray*}
\end{proposition}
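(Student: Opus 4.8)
The plan is to unwind the definitions on both sides and exhibit explicit inverse maps. Recall that $\HOM^0_R(\widehat M,\widehat N)$ consists of pairs $(f_0,f_1)$ with $f_0\colon M_0\to N_0$ and $f_1\colon M_1\to N_1$, and that $d_0(f_0,f_1)=d_N f+f d_M$ in each component; unwinding, $d_0(f_0,f_1)=(d_{N_0}f_1 - f_0 d_{M_1},\ d_{N_1}f_0 - f_1 d_{M_0})$ up to the sign conventions fixed above. Thus $\ker d_0$ is exactly the set of pairs satisfying $d_{N_0}f_0 = f_1 d_{M_0}$ and $d_{N_1}f_1 = f_0 d_{M_1}$, which by definition is precisely the set of morphisms of matrix factorizations $\widehat M\to\widehat N$. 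Similarly, $d_1\colon \HOM^1_R\to\HOM^0_R$ sends $(h_0,h_1)$ (with $h_0,h_1\colon M_0,M_1\to N_1,N_0$ appropriately) to the pair $(d_{N_1}h_0 + h_1 d_{M_0},\ d_{N_0}h_1 + h_0 d_{M_1})$, and the image of $d_1$ is exactly the set of null-homotopic morphisms $\widehat M\to\widehat N$. Therefore
$$\EXT^0(\widehat M,\widehat N) = \ker d_0/\Im d_1 = \{\text{morphisms}\}/\{\text{null-homotopic morphisms}\} = \HOM_{\HMF}(\widehat M,\widehat N),$$
which is the first isomorphism. (Strictly speaking one should check that the formulas for $d_0,d_1$ in terms of matrix entries match the stated ones $d_i(f) = d_N f + (-1)^i f d_M$; this is a sign-bookkeeping verification using the explicit differentials of $\widehat M$ and $\widehat N$.)

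For the second isomorphism I would apply the first one to $\widehat N\langle 1\rangle$ in place of $\widehat N$. By definition of the translation, $\widehat N\langle 1\rangle = (N_1,N_0,-d_{N_1},-d_{N_0})$, so $\HOM^0_R(\widehat M,\widehat N\langle 1\rangle) = \HOM_R(M_0,N_1)\oplus\HOM_R(M_1,N_0)$, which is precisely $\HOM^1_R(\widehat M,\widehat N)$; similarly $\HOM^1_R(\widehat M,\widehat N\langle 1\rangle)$ is identified with $\HOM^0_R(\widehat M,\widehat N)$. Under these identifications the differential $d_0$ for the pair $(\widehat M,\widehat N\langle 1\rangle)$ becomes (up to an overall sign coming from the $-d_N$'s, which does not affect kernels or images) the map $d_1$ for the pair $(\widehat M,\widehat N)$, and likewise $d_1$ for $(\widehat M,\widehat N\langle 1\rangle)$ becomes $d_0$ for $(\widehat M,\widehat N)$. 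Hence the $2$-complex computing $\EXT(\widehat M,\widehat N\langle 1\rangle)$ is, up to signs, the $2$-complex computing $\EXT(\widehat M,\widehat N)$ with its two terms swapped, so $\EXT^0(\widehat M,\widehat N\langle 1\rangle)\simeq\EXT^1(\widehat M,\widehat N)$. Combining with the first isomorphism applied to $\widehat N\langle 1\rangle$ gives $\EXT^1(\widehat M,\widehat N)\simeq\HOM_{\HMF}(\widehat M,\widehat N\langle 1\rangle)$.

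The only genuinely non-routine point — "the main obstacle", such as it is — is matching signs: one must verify that the differentials $d_0,d_1$ written compactly as $d_i(f)=d_N f+(-1)^i f\,d_M$ really do produce, when one plugs in the $2\times 2$ block differentials of $\widehat M$ and $\widehat N$, exactly the matrix morphism/homotopy conditions in the definitions of morphism and null-homotopy, and that the sign introduced by $\langle 1\rangle$ (which negates both differentials of $\widehat N$) is an overall sign on $d_0$ and $d_1$ that can be absorbed by an isomorphism of $2$-complexes (e.g. rescaling one of the two terms by $-1$), hence is invisible to cohomology. This is entirely mechanical, and once it is done the statement "follows from the definitions" exactly as claimed.
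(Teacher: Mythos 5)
Your argument is correct and is exactly the definitional unwinding the paper itself invokes (the paper simply states the proposition "by definition"): $\ker d_0$ is the set of morphisms of matrix factorizations, $\Im d_1$ the null-homotopic ones, and the degree-one case follows by applying this to $\widehat N\langle 1\rangle$, whose negated differentials only change the $2$-complex by an overall sign. The only blemish is a garbled indexing in your parenthetical formula for $d_0(f_0,f_1)$ (the correct components are $d_{N_0}f_0-f_1d_{M_0}$ and $d_{N_1}f_1-f_0d_{M_1}$), which you yourself flag as sign/bookkeeping and which does not affect the conclusion.
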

We also recall the following result, which can be found in Proposition 
12 and Corollary 6~\cite{kr}. Given a matrix factorization 
$\widehat{N}=(N_0,N_1,d_{N_0},d_{N_1})$, one can define its dual by  
$\widehat{N}_{\bullet}=(N_0^{\ast},N_1^\ast,-d_{N_1}^\ast,d_{N_0}^\ast)$, where $N^\ast=\HOM_R(N,R)$.

\begin{lemma}
\label{lem:ExtvH}
If $M$ is finite, we have an isomorphism 
$$
\EXT(\widehat{M},\widehat{N})\cong H(\widehat{M}_{\bullet}\btime{R}\widehat{N})
$$
which preserves the $q$-degree. 
\end{lemma}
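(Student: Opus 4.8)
The plan is to unwind the definitions on both sides and exhibit the required isomorphism componentwise, then check that it is compatible with the differentials. First I would recall that, by definition, $\widehat{M}_{\bullet}\btime{R}\widehat{N}$ is a matrix factorization with potential $-P+P=0$ over $R$, so it is genuinely a $2$-complex of free $R$-modules and its ``homology'' $H(\widehat{M}_{\bullet}\btime{R}\widehat{N})$ makes sense as the cohomology of that $2$-complex. Writing out the tensor product formula with $\widehat{M}_{\bullet}=(M_0^{\ast},M_1^{\ast},-d_{M_1}^{\ast},d_{M_0}^{\ast})$, the even part is $M_0^{\ast}\otimes_R N_0\oplus M_1^{\ast}\otimes_R N_1$ and the odd part is $M_1^{\ast}\otimes_R N_0\oplus M_0^{\ast}\otimes_R N_1$. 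On the other side, by definition $\HOM_R^0(\widehat{M},\widehat{N})=\HOM_R(M_0,N_0)\oplus\HOM_R(M_1,N_1)$ and $\HOM_R^1(\widehat{M},\widehat{N})=\HOM_R(M_0,N_1)\oplus\HOM_R(M_1,N_0)$. The key point is the canonical map
$$
M_i^{\ast}\otimes_R N_j=\HOM_R(M_i,R)\otimes_R N_j\longrightarrow \HOM_R(M_i,N_j),
$$
which is an isomorphism precisely because $M_i$ is finitely generated free over $R$ (this is where the hypothesis that $M$ is finite enters). Assembling these four isomorphisms gives a degree-preserving (in the internal $q$-grading, since duals of graded free modules are graded and the grading shifts in the tensor-product and $\HOM$ constructions match) isomorphism of $\Z/2\Z$-graded $R$-modules between $\widehat{M}_{\bullet}\btime{R}\widehat{N}$ and the $2$-complex $\HOM_R(\widehat{M},\widehat{N})$.

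Next I would verify that this module isomorphism intertwines the two differentials. The differential on $\HOM_R(\widehat{M},\widehat{N})$ is $d_i(f)=d_N f+(-1)^i f\, d_M$; the differential on $\widehat{M}_{\bullet}\btime{R}\widehat{N}$ is the $2\times 2$ block matrix built from $d_{M_\bullet}=(-d_{M_1}^{\ast},d_{M_0}^{\ast})$ and $d_N$ according to the tensor-product formula in the excerpt. Under the identification $\phi\otimes n\mapsto (x\mapsto \phi(x)n)$, precomposition with $d_{M}^{\ast}$ on the $M^{\ast}$-factor corresponds to precomposition with $d_M$ on $\HOM$, and postcomposition with $d_N$ on the $N$-factor corresponds to postcomposition with $d_N$; the signs in the Koszul-type block matrix are exactly engineered to reproduce the $(-1)^i$ in Leibniz's rule. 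This is a routine but slightly fiddly sign check, best done by writing both differentials as explicit $2\times 2$ matrices with respect to the four summands and comparing entries. Having matched differentials, the induced map on cohomology is an isomorphism $H(\widehat{M}_{\bullet}\btime{R}\widehat{N})\cong \EXT(\widehat{M},\widehat{N})$ preserving the $\Z/2\Z$-grading, and the $q$-degree is preserved because each of the four building-block isomorphisms is homogeneous of degree zero after accounting for the defining shifts.

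The main obstacle is the bookkeeping of signs and grading shifts: one has to be careful that the shift $\{\tfrac12(\deg q-\deg p)\}$ appearing in the dual (via $-d_{M_1}^{\ast}$, which has the ``wrong'' degree a priori) together with the shifts hidden in $\btime{S}$ combine to give exactly the internal grading on $\HOM_R(M_i,N_j)$, and that the block-matrix signs in the tensor product match the $(-1)^i$ twist in $d_i$. Everything else is formal: finiteness of $M$ is used only to pass from $M^{\ast}\otimes_R N$ to $\HOM_R(M,N)$, and the rest is the standard adjunction/duality manipulation. I would therefore organize the write-up as: (i) state the four component isomorphisms and note where finiteness is used; (ii) assemble them and check the differential compatibility by an explicit block-matrix computation; (iii) conclude on cohomology and remark that $q$-degrees are preserved; optionally (iv) cite Proposition~\ref{ext-hom} to re-express the result in terms of $\HOM_{\HMF}$.
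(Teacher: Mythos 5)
Your proposal is correct and is the standard argument; note that the paper itself gives no proof of this lemma but simply cites Proposition~12 and Corollary~6 of Khovanov--Rozansky, where essentially this computation is carried out. The one point to be explicit about in a write-up is the sign you already flagged: with the conventions $\widehat{M}_{\bullet}=(M_0^{\ast},M_1^{\ast},-d_{M_1}^{\ast},d_{M_0}^{\ast})$ and the block-matrix differential of $\btime{R}$, the identification $M_i^{\ast}\otimes_R N_j\cong \HOM_R(M_i,N_j)$ intertwines the differentials only after inserting a factor $-1$ on the $\HOM_R(M_1,N_0)$ summand of the odd part (and the identity elsewhere), which one checks is consistent for both $d_0$ and $d_1$; with that adjustment the complexes are isomorphic and the claim follows.
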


%
%
%
%
\subsection{Matrix factorizations associated to webs}\label{sec:MFwebs}
For a given web $\Gamma$, we will always denote the corresponding 
matrix factorization by $\widehat{\Gamma}$. 
\vskip0.5cm

We define a matrix factorization for the following web with formal 
indices associated to its boundaries. We assume that $k\geq 1$.
\begin{figure}[htb]
$$
\txt{\input{figure/figmoy3-mf}}
$$
\caption{${L}^{[k]}_{(1;2)}$}
\end{figure}

\begin{definition}
We define the matrix factorization
\begin{equation}
\label{line-mf}
\widehat{L}^{[k]}_{(1;2)}:=\mathop{\boxtimes}_{a=1}^{k} K\Big( P^{[k]}_{a,(1;2)} ;X_{a,(1)}^{(k)}-X_{a,(2)}^{(k)} \Big)_{R_{(1,2)}^{(k,k)}} 
\end{equation}
where 
\begin{gather*}
P^{[k]}_{a,(1;2)}= \\[0.5em]
\dfrac{P_{N+1}(X_{1,(2)}^{(k)},...,X_{a-1,(2)}^{(k)} ,
X_{a,(1)}^{(k)},...,X_{k,(1)}^{(k)})-P_{N+1}
(X_{1,(2)}^{(k)},...,X_{a,(2)}^{(k)},X_{a+1,(1)}^{(k)},...,X_{k,(1)}^{(k)})}
{X_{a,(1)}^{(k)}-X_{a,(2)}^{(k)}}.
\end{gather*}
\end{definition}

\begin{proposition}\label{str-mf-ind1}
We have the following results:
\begin{enumerate}
\item $\widehat{L}^{[k]}_{(1;2)}$ is homotopy equivalent to the zero matrix 
factorization if $k\geq N+1$;
\vskip0.2cm
\item $\widehat{L}^{[k]}_{(1;2)}$ ($1\leq k\leq N$) is indecomposable.
\end{enumerate}
\end{proposition}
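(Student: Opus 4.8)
The plan is to prove the two statements separately, both by explicit analysis of the Koszul matrix factorization $\widehat{L}^{[k]}_{(1;2)}$ and its potential. For part (1), the key observation is that the potential is
$$
P_{N+1}(\mathbb{X}^{k}_{(1)}) - P_{N+1}(\mathbb{X}^{k}_{(2)}) = p_{N+1}(\mathbb{T}^k_{(1)}) - p_{N+1}(\mathbb{T}^k_{(2)}),
$$
the difference of two power sums $t_1^{N+1}+\cdots+t_k^{N+1}$ in the two alphabets. First I would recall (from~\cite{kr}, and as used in~\cite{wu,yo1,yo2}) the standard fact that a Koszul matrix factorization $K(\mathbf{p};\mathbf{q})_R$ is contractible as soon as one entry of the sequence $\mathbf{q}$ — here the sequence $\big(X_{a,(1)}^{(k)}-X_{a,(2)}^{(k)}\big)_{a=1}^k$ — together with the corresponding $p_a$ generates the unit ideal, or more precisely as soon as the regular sequence condition fails in the appropriate way; the cleanest route for $k\ge N+1$ is to show that $P_{N+1}(\mathbb{X}^k_{(1)})$, viewed as a polynomial in the $X_{j,(1)}^{(k)}$ ($1\le j\le k$), lies in the ideal generated by $\partial P_{N+1}/\partial X_{j,(1)}^{(k)}$ after passing to $k$ variables with $k>N$; equivalently, $p_{N+1}$ in $k>N$ variables has no critical point at the origin of the partially symmetric ring beyond those forced, so the factorization has zero potential behaviour and contracts. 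Concretely, I would note that when $k\ge N+1$ the Jacobi algebra $\C[\mathbb{X}^k_{(1)}]/(P_{N+1}'s\ partials)$ collapses in the relevant degrees, and invoke the criterion that a Koszul MF with this property is null-homotopic.

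For part (2), indecomposability, the standard strategy is to compute the endomorphism ring $\END_{\HMF}(\widehat{L}^{[k]}_{(1;2)})$ and show it is local — indeed that its degree-zero part is just $\C$. By Lemma~\ref{lem:ExtvH}, since $\widehat{L}^{[k]}_{(1;2)}$ is finite, we have
$$
\EXT\big(\widehat{L}^{[k]}_{(1;2)}, \widehat{L}^{[k]}_{(1;2)}\big) \cong H\big((\widehat{L}^{[k]}_{(1;2)})_{\bullet} \btime{R} \widehat{L}^{[k]}_{(1;2)}\big),
$$
and the right-hand side is the homology of a Koszul complex whose potential is the \emph{difference} of the two copies, hence zero — so it computes an actual cohomology ring. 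I would identify this with (a graded piece of) the cohomology of a partial flag variety, or more directly with $\Lambda^k$ of the relevant tautological bundle quotient, which is the well-known computation behind the fact that $\widehat{L}^{[k]}$ categorifies the $k$-th fundamental representation $\Lambda^k_q(\C^N_q)$ for $1\le k\le N$. The point is that this cohomology ring is a connected graded $\C$-algebra concentrated in non-negative degrees with one-dimensional degree-zero part, hence local, which forces $\widehat{L}^{[k]}_{(1;2)}$ to be indecomposable in the Krull–Schmidt category $\HMF$.

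The main obstacle is the explicit homology computation in part (2): one must show that $H\big((\widehat{L}^{[k]}_{(1;2)})_{\bullet} \btime{R} \widehat{L}^{[k]}_{(1;2)}\big)$ has the right size and, crucially, that its degree-zero part is one-dimensional. This requires knowing that the Koszul sequence defining $\widehat{L}^{[k]}_{(1;2)}$ is regular (for $k\le N$ the relevant $P^{[k]}_{a,(1;2)}$ together with $X_{a,(1)}^{(k)}-X_{a,(2)}^{(k)}$ do form a regular sequence, because $p_{N+1}$ in $k\le N$ variables is a nondegenerate potential), and then using excess-intersection / Koszulity arguments to collapse the tensor product. I expect this to follow from the computations already present in~\cite{kr,wu,yo1,yo2}, but the bookkeeping with the partially symmetric rings $R^{\vec{k}}_{\vec{r}}$ and the $X$-alphabets is where the care is needed; alternatively one can cite the known identification of $\EXT$ of these web MFs with $\Hom$-spaces of the corresponding $\mathfrak{sl}_N$-representations, from which both the contractibility for $k\ge N+1$ (the representation is zero) and the indecomposability for $1\le k\le N$ (the representation is irreducible, so its identity-endomorphism space is $\C$) follow immediately.
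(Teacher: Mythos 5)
For part (2) you follow essentially the same route as the paper: identify $\EXT(\widehat{L}^{[k]}_{(1;2)},\widehat{L}^{[k]}_{(1;2)})$ (via Lemma~\ref{lem:ExtvH} and the closed circle web) with the cohomology ring of the Grassmannian of $k$-planes in $\C^N$, observe that this is a non-negatively graded connected ring with one-dimensional degree-zero part, and conclude that the identity is a primitive idempotent in the Krull--Schmidt category $\HMF$. For part (1) your route is genuinely different. The paper's argument is a one-line degree count: the Koszul entry $P^{[k]}_{a,(1;2)}$ is homogeneous of degree $2(N+1-a)$, so when $k\geq N+1$ the factor with $a=N+1$ has a nonzero \emph{constant} as its first entry, and a Koszul factor $K(c;q)$ with $c$ a unit is contractible, which kills the whole tensor product. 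You instead argue through the Jacobi algebra of the potential: for $k\geq N+1$ the partial derivatives of $P_{N+1}$ with respect to the elementary symmetric variables generate the unit ideal (equivalently $H^*(Gr(k,N))=0$ because the Grassmannian is empty), and since multiplication by any element of the Jacobian ideal acts null-homotopically on a finite matrix factorization (by~\cite{kr}, or via Proposition~\ref{hom-eq} applied to $\partial W=\sum(\partial p_a)q_a+p_a(\partial q_a)$), the identity morphism is null-homotopic. This works and buys a conceptual explanation (the potential has empty critical locus), but it is less elementary than the paper's observation, and your intermediate phrasing is off: that $P_{N+1}$ lies in the ideal of its own partials is automatic from Euler's formula and irrelevant --- what you need is that $1$ lies in the Jacobian ideal. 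Finally, your fallback for (2) (``the representation is irreducible, so its endomorphism space is $\C$'') is too quick on its own: decategorified irreducibility only controls $\dim_q\EXT$ up to the grading normalization, and pinning down that the \emph{degree-zero} part is one-dimensional is precisely the Grassmannian-cohomology computation of your main argument.
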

\begin{proof}
(1) Expressing the $N+1$-th power sum in terms of the elementary symmetric 
polynomials, we see that 
$$P_{N+1,(1;2)}^{[k]}\in\C$$ 
Therefore, $K(P_{N+1,(1;2)}^{[k]};X_{N,(1)}^{(k)}-X_{N,(2)}^{(k)})$ is homotopy 
equivalent to the zero matrix factorization, which implies that 
$\widehat{L}^{[k]}_{(1;2)}$ is homotopy equivalent to the zero matrix 
factorization.\\ 
(2) We have 
$$\EXT(\widehat{L}_{(1;2)}^{[k]}, \widehat{L}_{(1;2)}^{[k]})\cong H^*(G_k,\C),$$
where the latter is the cohomology ring of the Grassmannian of $k$-dimensional 
complex planes in $\C^N$. This shows that 
$\EXT(\widehat{L}_{(1;2)}^{[k]}, \widehat{L}_{(1;2)}^{[k]})$ has dimension one in 
degree zero. Therefore, the identity is a primitive idempotent, 
which means that $\widehat{L}_{(1;2)}^{[k]}$ is indecomposable.
\end{proof}

Consider the following webs, where we assume that $k_1,k_2,k_3\geq 0$ and 
$k_3=k_1+k_2$:
\begin{figure}[htb]
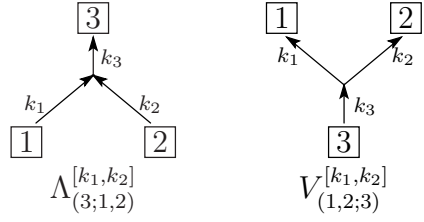

$$
\txt{\input{figure/figgluing-in-3valent-mf}}
\qquad\txt{\input{figure/figgluing-out-3valent-mf}} 
$$
\vskip0.9cm
\caption{${\Lambda}_{(3;1,2)}^{[k_1,k_2]}$ and ${V}^{[k_1,k_2]}_{(1,2;3)}$}
\end{figure}
\begin{definition}
\indent
We define the matrix factorizations  
\begin{equation}
\label{n-mf}
\widehat{\Lambda}_{(3;1,2)}^{[k_1,k_2]} :=\mathop{\boxtimes}_{a=1}^{k_{3}} 
K\Big( P_{a,(3;1,2)}^{[k_1,k_2]} ;X_{a,(3)}^{(k_3)}-X^{(k_1,k_2)}_{a,(1,2)} \Big)_{R^{(k_1,k_2,k_3)}_{(1,2,3)}} \{ - k_1 k_2 \}
\end{equation}
where 
\begin{gather*}
P_{a,(3;1,2)}^{[k_1,k_2]}=\\[0.5em]
\dfrac{
P_{N+1}(...,X^{(k_1,k_2)}_{a-1,(1,2)} ,X_{a,(3)}^{(k_3)},X_{a+1,(3)}^{(k_3)},...)
-P_{N+1}(...,X^{(k_1,k_2)}_{a-1,(1,2)},X^{(k_1,k_2)}_{a,(1,2)},X_{a+1,(3)}^{(k_3)},...)
}
{
X_{a,(3)}^{(k_3)}-X^{(k_1,k_2)}_{a,(1,2)}
},
\end{gather*}
and 
\begin{equation}
\label{v-mf}
\widehat{V}^{[k_1,k_2]}_{(1,2;3)}:=\mathop{\boxtimes}_{a=1}^{k_{3}}
K\Big( P_{a,(1,2;3)}^{[k_1,k_2]} ;X^{(k_1,k_2)}_{a,(1,2)}-X_{a,(3)}^{(k_3)} \Big)_{R^{(k_1,k_2,k_3)}_{(1,2,3)}}
\end{equation}
where 
\begin{gather*}
P_{a,(1,2;3)}^{[k_1,k_2]}=\\[0.5em]
\dfrac{
P_{N+1}(...,X_{a-1,(3)}^{(k_3)} ,X^{(k_1,k_2)}_{a,(1,2)},X^{(k_1,k_2)}_{a+1,(1,2)},...)
-P_{N+1}(...,X_{a-1,(3)}^{(k_3)},X_{a,(3)}^{(k_3)},X^{(k_1,k_2)}_{a+1,(1,2)},...)
}{
X^{(k_1,k_2)}_{a,(1,2)}-X_{a,(3)}^{(k_3)}
}.
\end{gather*}
\end{definition}
\vskip0.5cm
The proof of the following proposition is analogous to the one of 
Proposition~\ref{str-mf-ind1}.
\begin{proposition}\label{str-mf-ind2}
We have the following results:
\begin{enumerate}
\item $\widehat{\Lambda}_{(3;1,2)}^{[k_1,k_2]}$ and $\widehat{V}_{(1,2;3)}^{[k_1,k_2]}$ are homotopy equivalent to the zero matrix factorization if $k_3\geq N+1$;
\vskip0.2cm
\item $\widehat{\Lambda}_{(3;1,2)}^{[k_1,k_2]}$ and $\widehat{V}_{(1,2;3)}^{[k_1,k_2]}$ 
are indecomposable for $0\leq k_3\leq N$.
\end{enumerate}
\end{proposition}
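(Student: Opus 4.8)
The plan is to mirror the proof of Proposition~\ref{str-mf-ind1}, replacing the Grassmannian by a partial flag variety in part~(2).

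\emph{Part~(1).} Up to the grading shift, $\widehat{\Lambda}_{(3;1,2)}^{[k_1,k_2]}$ is the tensor product over $R_{(1,2,3)}^{(k_1,k_2,k_3)}$ of the $k_3$ Koszul factors $K\bigl(P_{a,(3;1,2)}^{[k_1,k_2]};X_{a,(3)}^{(k_3)}-X^{(k_1,k_2)}_{a,(1,2)}\bigr)$, $a=1,\dots,k_3$, whose potentials telescope to $P_{N+1}(\mathbb{X}^{k_3}_{3})-P_{N+1}(\mathbb{X}^{k_1}_{1})-P_{N+1}(\mathbb{X}^{k_2}_{2})$, a polynomial of degree $2(N+1)$. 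Since $X_{a,(3)}^{(k_3)}-X^{(k_1,k_2)}_{a,(1,2)}$ has degree $2a$, the first entry $P_{a,(3;1,2)}^{[k_1,k_2]}$ is homogeneous of degree $2(N+1-a)$; hence, when $k_3\geq N+1$, the factor with $a=N+1$ has a constant first entry. Writing the power sum $p_{N+1}$ in $k_3$ variables via Newton's identities, it is linear in $e_{N+1}$ with coefficient $(-1)^{N}(N+1)$ and does not involve $e_{N+2},\dots,e_{k_3}$; since the difference quotient defining $P_{N+1,(3;1,2)}^{[k_1,k_2]}$ only alters the $(N+1)$-st argument, one gets $P_{N+1,(3;1,2)}^{[k_1,k_2]}=(-1)^{N}(N+1)\in\C^{\times}$. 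A Koszul factor $K(c;q)$ with $c$ a nonzero constant is contractible (its first differential is an isomorphism of modules), so $\widehat{\Lambda}_{(3;1,2)}^{[k_1,k_2]}$ is homotopy equivalent to the zero matrix factorization. The same computation applies verbatim to $\widehat{V}_{(1,2;3)}^{[k_1,k_2]}$, and the degenerate cases $k_1=0$ or $k_2=0$ reduce directly to Proposition~\ref{str-mf-ind1}(1).

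\emph{Part~(2).} Since $\HMF$ is Krull--Schmidt, it suffices to show that the degree-zero endomorphism ring of $\widehat{\Lambda}_{(3;1,2)}^{[k_1,k_2]}$ is one-dimensional; it is then a field, so the identity is a primitive idempotent and $\widehat{\Lambda}_{(3;1,2)}^{[k_1,k_2]}$ is indecomposable. By Proposition~\ref{ext-hom} this ring is the $q$-degree-zero part of $\EXT(\widehat{\Lambda}_{(3;1,2)}^{[k_1,k_2]},\widehat{\Lambda}_{(3;1,2)}^{[k_1,k_2]})$, and the grading shift $\{-k_1k_2\}$ cancels in this self-$\EXT$. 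Since $\widehat{\Lambda}_{(3;1,2)}^{[k_1,k_2]}$ is finite, Lemma~\ref{lem:ExtvH} gives $\EXT(\widehat{\Lambda}_{(3;1,2)}^{[k_1,k_2]},\widehat{\Lambda}_{(3;1,2)}^{[k_1,k_2]})\cong H\bigl(\widehat{\Lambda}_{\bullet}\boxtimes_{R}\widehat{\Lambda}\bigr)$ over $R=R_{(1,2,3)}^{(k_1,k_2,k_3)}$, and the standard manipulations of Koszul matrix factorizations (excluding the redundant variables, exactly as in the Grassmannian computation underlying Proposition~\ref{str-mf-ind1}(2); see~\cite{kr,wu,yo1,yo2}) identify this with the cohomology ring of the partial flag variety of flags $V_{k_1}\subset V_{k_3}\subset\C^{N}$, whose graded dimension $\begin{bmatrix}N\\k_3\end{bmatrix}_q\begin{bmatrix}k_3\\k_1\end{bmatrix}_q$ matches the evaluation of the associated closed web obtained from~\eqref{eq:paralleldigon}. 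For $0\leq k_1\leq k_3\leq N$ this flag variety is nonempty and connected, being a homogeneous space for $\mathrm{GL}_N$, so its degree-zero cohomology is one-dimensional, which is what we need. The argument for $\widehat{V}_{(1,2;3)}^{[k_1,k_2]}$ is identical, its self-$\EXT$ being the cohomology of the same flag variety (or its isomorphic opposite).

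The main obstacle is the identification in part~(2) of $\EXT(\widehat{\Lambda}_{(3;1,2)}^{[k_1,k_2]},\widehat{\Lambda}_{(3;1,2)}^{[k_1,k_2]})$ with the cohomology of the partial flag variety: this requires forming $\widehat{\Lambda}_{\bullet}\boxtimes_{R}\widehat{\Lambda}$ over $R_{(1,2,3)}^{(k_1,k_2,k_3)}$, performing the Gaussian eliminations that delete the contractible Koszul factors, and recognizing what survives as a Koszul complex computing the Borel presentation of the flag-variety cohomology. For indecomposability, however, only the connectedness of the flag variety is used, so it would suffice to extract just that one-dimensionality statement from~\cite{wu,yo1,yo2} rather than the whole cohomology ring.
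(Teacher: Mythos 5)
Your proof is correct and follows exactly the route the paper intends: the paper only states that the argument is ``analogous to the one of Proposition~\ref{str-mf-ind1}'', and you carry out precisely that analogy, killing the $(N+1)$-st Koszul factor via the constant $(-1)^N(N+1)$ for part~(1) and replacing the Grassmannian by the two-step partial flag variety $V_{k_1}\subset V_{k_3}\subset\C^N$ for part~(2). You in fact supply more detail than the paper does; no gaps.
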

The following proposition can be proved by direct computation. 
\begin{proposition}\label{str-mf-isom}
We have the following isomorphisms:
\begin{eqnarray}
\widehat{\Lambda}_{(3;1,2)}^{[0,k_2]}\simeq\widehat{L}_{(3;2)}^{[k_2]},&\widehat{\Lambda}_{(3;1,2)}^{[k_1,0]}\simeq\widehat{L}_{(3;1)}^{[k_1]};\\[0.5cm]
\widehat{V}_{(1,2;3)}^{[0,k_2]}\simeq\widehat{L}_{(2;3)}^{[k_2]},&
\widehat{\Lambda}_{(1,2;3)}^{[k_1,0]}\simeq\widehat{L}_{(1;3)}^{[k_1]}.
\end{eqnarray}
\end{proposition}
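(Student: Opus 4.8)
The plan is to prove all four isomorphisms by a direct comparison of the defining data: the point is that setting one of the two colours to zero makes the Koszul matrix factorization attached to the trivalent web collapse, on the nose, to the one attached to the edge $L$. I will carry out $\widehat{\Lambda}_{(3;1,2)}^{[0,k_2]}\simeq\widehat{L}_{(3;2)}^{[k_2]}$ in detail and indicate the cosmetic changes needed for the other three.

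First I would record the specialization of the ingredients at $k_1=0$. Since $\mathbb{T}^{0}_{r_1}=\emptyset$, we have $\C[\mathbb{T}^{0}_{r_1}]=\C$, so $R^{(0,k_2,k_3)}_{(1,2,3)}=\mathbb{S}(\mathbb{T}^{k_2}_{r_2}\vert\mathbb{T}^{k_3}_{r_3})=R^{(k_2,k_3)}_{(2,3)}$. Likewise the rational function specializes as $X^{(0,k_2)}_{(1,2)}=\big(\sum_{b=0}^{0}e_{b,r_1}\big)^{s(0)}\big(\sum_{b=0}^{k_2}e_{b,r_2}\big)^{s(k_2)}=X^{(k_2)}_{(2)}$, hence $X^{(0,k_2)}_{j,(1,2)}=X^{(k_2)}_{j,(2)}$ for all $j$ and $\mathbb{X}^{(0,k_2)}_{(1,2)}=\mathbb{X}^{(k_2)}_{(2)}$; this is exactly the computation in the Example following the definition of $X^{\vec{k}}_{\vec{r}}$. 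Finally $k_3=k_1+k_2=k_2$, so the tensor product in~\eqref{n-mf} runs over $a=1,\ldots,k_2$, as in~\eqref{line-mf}, and the grading shift $\{-k_1k_2\}$ in~\eqref{n-mf} becomes $\{0\}$ and disappears.

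Next I would substitute these identifications into~\eqref{n-mf}. Each potential $P_{a,(3;1,2)}^{[0,k_2]}$ becomes
\[
\dfrac{P_{N+1}(\ldots,X^{(k_2)}_{a-1,(2)},X_{a,(3)}^{(k_2)},X_{a+1,(3)}^{(k_2)},\ldots)-P_{N+1}(\ldots,X^{(k_2)}_{a-1,(2)},X^{(k_2)}_{a,(2)},X_{a+1,(3)}^{(k_2)},\ldots)}{X_{a,(3)}^{(k_2)}-X^{(k_2)}_{a,(2)}},
\]
which is literally the polynomial $P^{[k_2]}_{a,(3;2)}$ read off from~\eqref{line-mf} under the relabelling $1\mapsto 3$ of formal indices, and the other entry of the $a$-th Koszul factor, $X_{a,(3)}^{(k_3)}-X^{(k_1,k_2)}_{a,(1,2)}$, becomes $X_{a,(3)}^{(k_2)}-X_{a,(2)}^{(k_2)}$, again matching~\eqref{line-mf}. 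Thus $\widehat{\Lambda}_{(3;1,2)}^{[0,k_2]}$ and $\widehat{L}_{(3;2)}^{[k_2]}$ are the same tensor product of Koszul matrix factorizations over the same ring, hence equal, and in particular isomorphic. For $\widehat{\Lambda}_{(3;1,2)}^{[k_1,0]}\simeq\widehat{L}_{(3;1)}^{[k_1]}$ one repeats the argument with the roles of the indices $1$ and $2$ interchanged: now $\mathbb{T}^{0}_{r_2}=\emptyset$ forces $X^{(k_1,0)}_{(1,2)}=X^{(k_1)}_{(1)}$ and $R^{(k_1,0,k_3)}_{(1,2,3)}=R^{(k_1,k_3)}_{(1,3)}$, while $k_3=k_1$ and $\{-k_1\cdot 0\}=\{0\}$. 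The two $\widehat{V}$ isomorphisms are handled identically, reading off the target from~\eqref{v-mf} instead of~\eqref{line-mf}; here $\widehat{V}$ carries no grading shift to begin with, so the comparison is even cleaner. (In the last displayed line of the statement ``$\widehat{\Lambda}_{(1,2;3)}^{[k_1,0]}$'' should read ``$\widehat{V}_{(1,2;3)}^{[k_1,0]}$''.)

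I do not expect a genuine obstacle: the only point requiring care is the bookkeeping of the formal boundary indices together with the observation that the empty-alphabet specialization simultaneously trivializes one factor of the polynomial ring, kills the grading shift $\{-k_1k_2\}$, and reduces the rational function $X^{\vec{k}}_{\vec{r}}$ to the single-alphabet $X$ --- all of which is routine and already illustrated by the Examples in Section~\ref{mf}.
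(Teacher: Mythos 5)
Your proposal is correct and matches the paper's intended argument: the paper simply asserts that Proposition~\ref{str-mf-isom} ``can be proved by direct computation,'' and your specialization at $k_1=0$ (resp.\ $k_2=0$) --- empty alphabet $\mathbb{T}^0$, collapse of the ring $R^{(0,k_2,k_3)}_{(1,2,3)}$, reduction of $X^{(0,k_2)}_{(1,2)}$ to $X^{(k_2)}_{(2)}$, vanishing of the shift $\{-k_1k_2\}$, and term-by-term identification of the Koszul factors --- is precisely that computation. You are also right that the last isomorphism in the statement contains a typo and should read $\widehat{V}_{(1,2;3)}^{[k_1,0]}\simeq\widehat{L}_{(1;3)}^{[k_1]}$.
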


Tensoring the matrix factorizations above, we can associate a 
matrix factorizations $\hat{u}$ to any monomial $\mathfrak{sl}_N$-web from 
$\vec{k}$ to $\vec{k}'$ without tags (but with oriented $N$-colored edges). 
Note that for cups and caps we 
use the same matrix factorization as for $L^{[\vec{k}]}_{(1;2)}$. We also have 
$$
\hat{u}_{\bullet}\cong\widehat{u^*}\{d(\vec{k})\}\langle 1\rangle,
$$
for any monomial web $u\in W(\vec{k},N)$. By Lemma~\ref{lem:ExtvH}, this 
implies that 
\begin{equation}
\label{eq:ExtvH}
\mathrm{EXT}(\hat{u},\hat{v})\cong H(\hat{u}_{\bullet}\btime{R^{\vec{k}}}\hat{v})
\cong H(\widehat{u^*v})\{d(\vec{k})\} \langle 1\rangle 
\end{equation}
for any $u,v\in W(\vec{k},N)$.

For the proof of the following theorem, we refer to Sections 6 through 11 
in~\cite{wu} and Section 3 in~\cite{yo1}.  
\begin{theorem}[Wu, Yonezawa]
\label{thm:catwebrels}
The matrix factorizations associated to webs without tags satisfy all 
relations in Definition~\ref{def:webrelations}, except the first one, up 
to homotopy equivalence. These equivalences are $q$-degree preserving, 
but might involve homological degree shifts. 
\end{theorem}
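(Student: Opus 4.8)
The plan is to verify the relations of Definition~\ref{def:webrelations} one at a time, directly from the explicit Koszul presentations of $\widehat{L}^{[k]}_{(1;2)}$, $\widehat{\Lambda}^{[k_1,k_2]}_{(3;1,2)}$ and $\widehat{V}^{[k_1,k_2]}_{(1,2;3)}$ given in Section~\ref{sec:MFwebs}. The first reduction is that it suffices to check each relation \emph{locally}: the matrix factorization of a glued or juxtaposed web is obtained by tensoring the factorizations of its pieces over the polynomial rings attached to the shared boundary alphabets, so a homotopy equivalence between the two sides of a relation, established for the sub-web where it occurs, extends to any web containing that sub-web by tensoring with the identity on the complementary factor. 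Thus only the atomic versions of the zig-zag moves, the digon relations \eqref{eq:paralleldigon} and \eqref{eq:oppositedigon}, associativity \eqref{eq:associativity}, and the two square relations \eqref{eq:parallelsquare} and \eqref{eq:oppositesquare} need to be treated. The workhorses throughout are the standard manipulations of Koszul matrix factorizations: a Koszul factor $K(p;q)$ with $p$ or $q$ a unit is contractible and may be dropped (Gaussian elimination); a variable occurring with invertible coefficient in a single entry can be eliminated by back-substitution, removing the corresponding Koszul factor; and adding a polynomial multiple of one ``row'' to another, with the compensating move on the dual entries, changes the factorization only up to isomorphism. Each of these preserves the internal $q$-grading once one tracks the shifts $\{-k_1k_2\}$ and the like in~\eqref{eq:n-mf}, while some of them introduce a translation $\langle 1\rangle$ — this is the origin of the ``possible homological degree shift'' in the statement.

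With this toolkit, the zig-zag/isotopy moves and associativity \eqref{eq:associativity} are essentially formal: composing $\widehat{\Lambda}$ with the mirror $\widehat{V}$ (or a cup with a cap) along the internal alphabet, the $P$-parts of the Koszul entries telescope, the now-redundant variables get eliminated, and one is left with the identity factorization of a single strand, up to a $\langle 1\rangle$ matching the tag conventions; for associativity both sides expand to tensor products of the same Koszul factors over a common alphabet with the same potential, and a change of variables swapping the two internal edges together with row operations identifies them. For the digons \eqref{eq:paralleldigon}, \eqref{eq:oppositedigon} and the parallel square \eqref{eq:parallelsquare} the task is a direct sum decomposition: one computes the relevant tensor product over the internal alphabet and, after eliminating variables, finds the underlying module to be $R^{\vec{k}}_{\vec{r}}$ tensored with the cohomology of a partial flag variety fibred over a Grassmannian, whose Leray--Hirsch (Schubert-basis) decomposition as a free module over the base has graded rank exactly the $q$-binomial coefficient on the right-hand side; carrying this decomposition through the Gaussian elimination yields the splitting $\bigoplus\widehat{L}\{\cdots\}$, and a short count using $\deg(t_{i,r})=2$ pins down the shifts.

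The main obstacle is the ``square switch'' relation \eqref{eq:oppositesquare}. Under skew Howe duality it is the categorical shadow of the commutator relation $E_{+i}E_{-j}-E_{-j}E_{+i}=\delta_{i,j}(K_i-K_i^{-1})/(q-q^{-1})$ in $\U{m}$, and just as that relation takes different shapes according to the sign of the relevant weight entry, the matrix factorization identity genuinely requires a case analysis (by the sign of $a-b+t-s$) and yields a \emph{sum} over $r$ rather than a single term. Concretely one forms the tensor product of the two ``opposite'' trivalent-vertex factorizations along both internal edges; the alphabets on the two sides have different sizes, so after the variable eliminations one is comparing factorizations over differently-presented but isomorphic base rings, and the bookkeeping of which Koszul factors survive, of the induced potential, and of the $\langle 1\rangle$-shifts is delicate. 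The pay-off is once more a flag-variety cohomology decomposition whose graded multiplicities reproduce $\sum_r\begin{bmatrix}a-b+t-s\\r\end{bmatrix}_q$; identifying the summands with the webs on the right of \eqref{eq:oppositesquare} and confirming that all remaining shifts are internal rather than homological finishes the proof. This last computation is exactly the lengthy verification carried out in Sections~6--11 of~\cite{wu} and Section~3 of~\cite{yo1}, which is why it is only cited here.
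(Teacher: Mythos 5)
Your outline is consistent with what the paper does here: the paper gives no proof of Theorem~\ref{thm:catwebrels} at all, but simply defers to Sections~6--11 of~\cite{wu} and Section~3 of~\cite{yo1}, and your sketch (local verification via Koszul row operations and variable exclusion, digon and parallel-square relations from Grassmannian/flag-variety decompositions with $q$-binomial graded ranks, and the square-switch relation as the delicate case) is an accurate description of the strategy actually carried out in those references. Since you too ultimately delegate the lengthy square-switch computation to the same sources, your proposal takes essentially the same route as the paper.
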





\section{Categorified quantum $\mathfrak{sl}_m$ and $2$-representations}
\label{sec:catgroupandrep}
\subsection{Categorified $\U{m}$}\label{kl}
Khovanov and Lauda introduced diagrammatic 2-categories $\u(\mathfrak{g})$ 
which categorify the integral version of the corresponding 
idempotented quantum groups~\cite{kl3}. Independently, Rouquier 
also introduced similar 2-categories~\cite{rou}.
Subsequently, Cautis and Lauda~\cite{cl} defined diagrammatic 2-categories 
$\u_Q(\mathfrak{g})$ with implicit scalers $Q$ consisting of $t_{ij}$, $r_i$ and 
$s_{ij}^{pq}$ which determine certain signs in the definition of 
the categorified quantum groups.
\\
\indent
In this section, we recall $\u_Q(\mathfrak{sl}_m)$ briefly and choose the 
implicit scalars $Q$ to be given by $t_{ij}=-1$ if $j=i+1$, $t_{ij}=1$ otherwise, 
$r_i=1$ and $s_{ij}^{pq}=0$. This corresponds precisely to the signed 
version in~\cite{kl3,kl4}. The other conventions here 
are the same as those in Section~\ref{sec:fund}.  

\begin{definition}[Khovanov-Lauda]\label{def:KL}
The $2$-category $\u_Q(\mathfrak{sl}_{m})$ is defined as follows:
\begin{itemize}
\item[$\bullet$] The objects in $\u_Q(\mathfrak{sl}_m)$ are the weights $\l \in  \Z^{m-1} $.
\end{itemize}
For any pair of objects $\l$ and $\l'$ in $\u_Q(\mathfrak{sl}_m)$, the hom category 
$\u_Q(\mathfrak{sl}_m)(\l,\l')$ is the graded additive $\C$-linear category consisting of:
\begin{itemize}
\item[$\bullet$] objects ($1$-morphisms in $\u_Q(\mathfrak{sl}_m)$), 
which are finite formal sums of the form 
$\e_{\ui}{\idm}_{\l}\{t\}$ where $t\in \Z$ is the grading shift 
and $\ui$ is a signed sequence such that 
$\l'=\l+\sum_{a=1}^{l}\epsilon_ai_{a}'$.
\item[$\bullet$] morphisms from $\e_{\ui}{\idm}_{\l}\{t\}$ to 
$\e_{\underline{l}}{\idm}_{\l}\{t'\}$ in $\u_Q(\mathfrak{sl}_m)(\l,\l')$ 
($2$-morphisms in $\u_Q(\mathfrak{sl}_m)$) are 
$\C$-linear combinations of diagrams with degree $t'-t$ spanned by 
composites of the following diagrams:
\end{itemize}

\indent
\begin{eqnarray*}
&&\txt{\input{figure/e-dot}}:\e_{+i}\idm_{\l}\to\e_{+i}\idm_{\l}\{a_{ii}\}\hspace{1cm}
\txt{\input{figure/f-dot}}:\e_{+i}\idm_{\l}\to\e_{-i}\idm_{\l}\{a_{ii}\}
\\[1em]
&&\hspace{-.5cm}\txt{\input{figure/e-f-cup}}:\idm_{\l}\to\e_{(-i,+i)}\idm_{\l}\{\l_i+1\}\hspace{.5cm}
\txt{\input{figure/f-e-cup}}:\idm_{\l}\to\e_{(+i,-i)}\idm_{\l}\{-\l_i+1\}
\\[1em]
&&\hspace{-.5cm}\txt{\input{figure/e-f-cap}}:\e_{(-i,+i)}\idm_{\l}\to\idm_{\l}\{\l_i+1\}\hspace{.5cm}
\txt{\input{figure/f-e-cap}}:\e_{(+i,-i)}\idm_{\l}\to\idm_{\l}\{-\l_i+1\}
\\[1em]
&&\hspace{2cm}\txt{\input{figure/e-j-i1}}:\e_{(+i,+l)}\idm_{\l}\to\e_{(+l,+i)}\idm_{\l}\{-a_{il}\}
\\[1em]
&&\hspace{2cm}\txt{\input{figure/f-j-i1}}:\e_{(-i,-l)}\idm_{\l}\to\e_{(-l,-i)}\idm_{\l}\{-a_{il}\}
\end{eqnarray*}
As already remarked, the relations on the $2$-morphisms are 
those of the signed version in~\cite{kl3,kl4}, which we do not recall here 
because we do not need them explicitly in this paper.  
\end{definition}

We recall Khovanov and Lauda's Proposition 1.4 in~\cite{kl3}.
\begin{theorem}[Khovanov-Lauda]
\label{thm:KL}
The linear map 
$$\U{m}\to K_0^q(\u_Q(\mathfrak{sl}_m))$$ 
defined by 
$$q^t E_{\ui}1_{\lambda}\to \e_{\ui}{\idm}_{\l}\{t\}$$
is an isomorphism of algebras. 
\end{theorem}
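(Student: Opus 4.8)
The plan is to reproduce Khovanov and Lauda's argument for Proposition~1.4 of~\cite{kl3}. We work over $\C(q)$ throughout, so the divided powers $E_{\pm i}^{(a)}=E_{\pm i}^{a}/[a]!$ already lie in $\U{m}$ and, after $\otimes_{\Z[q,q^{-1}]}\C(q)$, there is no difference between $\u_Q(\mathfrak{sl}_m)$ and its Karoubi envelope at the level of $K_0^q$. I would organize the proof in three steps: the stated assignment is \emph{(i)} a well-defined algebra homomorphism, \emph{(ii)} surjective, and \emph{(iii)} injective; the third step is the crux.

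For \emph{(i)}, recall that $\U{m}$ is presented by the $1_\lambda$, $E_{+i}1_\lambda$, $E_{-i}1_\lambda$ subject to the $\mathfrak{sl}_2$-relations, the commutation relations, and the quantum Serre relations. Each of these is the image under $K_0^q$ of an explicit direct-sum decomposition of $1$-morphisms in $\u_Q(\mathfrak{sl}_m)$: the isomorphism $\e_{+i}\e_{-i}\idm_\lambda\cong\e_{-i}\e_{+i}\idm_\lambda\oplus\idm_\lambda^{\oplus[\lambda_i]}$ for $\lambda_i\ge 0$ together with its mirror for $\lambda_i\le 0$ (which on $K_0^q$ becomes $E_{+i}E_{-i}1_\lambda-E_{-i}E_{+i}1_\lambda=[\lambda_i]1_\lambda$); the genuine isomorphisms $\e_{\pm i}\e_{\pm j}\idm_\lambda\cong\e_{\pm j}\e_{\pm i}\idm_\lambda$ (same sign) for $|i-j|\ge 2$ and $\e_{+i}\e_{-j}\idm_\lambda\cong\e_{-j}\e_{+i}\idm_\lambda$ for $i\neq j$; and the categorical Serre isomorphism $\e_{+i}\e_{+j}\e_{+i}\idm_\lambda\cong\e^{(2)}_{+i}\e_{+j}\idm_\lambda\oplus\e_{+j}\e^{(2)}_{+i}\idm_\lambda$ for $|i-j|=1$, together with its counterpart for $\e_{-i}$. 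All of these decompositions are consequences of the defining nilHecke/KLR $2$-relations of Definition~\ref{def:KL}, so applying $K_0^q$ produces exactly the relations of $\U{m}$; compatibility with the $\C(q)$-action is built into the grading-shift conventions. Hence the map is a homomorphism of algebras.

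For \emph{(ii)}, by Definition~\ref{def:KL} every $1$-morphism of $\u_Q(\mathfrak{sl}_m)$ is a finite direct sum of grading shifts of composites $\e_{\ui}\idm_\lambda$, so its class in $K_0^q$ is a $\C(q)$-combination of the classes $[\e_{\ui}\idm_\lambda\{t\}]$, which are precisely the images of the $q^tE_{\ui}1_\lambda$; thus the map is onto. For \emph{(iii)} I would establish injectivity by means of a faithful $2$-representation rather than a direct Euler-form computation. For each dominant integral $\mathfrak{sl}_m$-weight $\Lambda$ let $R_\Lambda$ be the cyclotomic KLR algebra of highest weight $\Lambda$ and let $\mathcal{P}_\Lambda$ be its category of finite-dimensional graded projective modules. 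The cyclotomic $2$-representation makes $\bigoplus_\Lambda\mathcal{P}_\Lambda$ into a $2$-representation of $\u_Q(\mathfrak{sl}_m)$, and Brundan--Kleshchev's categorification theorem~\cite{bk} identifies $\bigoplus_\Lambda K_0^q(\mathcal{P}_\Lambda)$ with $\bigoplus_\Lambda V_\Lambda$ compatibly with the $\U{m}$-action. Since $\bigoplus_\Lambda V_\Lambda$ is a faithful $\U{m}$-module (a standard fact; see~\cite{lu}), the composite $\U{m}\to K_0^q(\u_Q(\mathfrak{sl}_m))\to\End_{\C(q)}\!\big(\bigoplus_\Lambda K_0^q(\mathcal{P}_\Lambda)\big)$ is injective, hence so is the first arrow; together with \emph{(ii)} this yields the desired isomorphism.

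The main obstacle is step \emph{(iii)}: one must know both that the cyclotomic $2$-functor is well-defined and that, on Grothendieck groups, it realizes $\bigoplus_\Lambda V_\Lambda$ faithfully --- that is, the full strength of~\cite{bk}. A more self-contained but computationally heavier alternative is to equip $K_0^q(\u_Q(\mathfrak{sl}_m))$ with the Euler form $\langle[X],[Y]\rangle=\dim_q\HOM(X,Y)$, compute $\dim_q\HOM_{\u_Q(\mathfrak{sl}_m)}(\e_{\ui}\idm_\lambda,\e_{\uj}\idm_\lambda)$ from the nilHecke and bubble calculus of Definition~\ref{def:KL}, recognise its pullback along our map as a nondegenerate $q$-sesquilinear form on $\U{m}$, and conclude that the kernel is trivial because any kernel element lies in the radical of that form.
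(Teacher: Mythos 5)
This theorem is not proved in the paper at all --- it is quoted from Khovanov--Lauda~\cite{kl3} (their Proposition~1.4) --- so the relevant comparison is with their argument. Your steps \emph{(i)} and \emph{(ii)} coincide with theirs: well-definedness follows from the explicit direct-sum decompositions of $1$-morphisms categorifying the defining relations, and surjectivity over $\C(q)$ is immediate since the $[\e_{\ui}\idm_{\l}\{t\}]$ span. (Two small caveats on \emph{(i)}: the divided-power form of the categorical Serre relation lives only in the Karoubi envelope, and the identification $K_0^q(\u_Q(\mathfrak{sl}_m))\cong K_0^q(\dot{\u}_Q(\mathfrak{sl}_m))$ you assert is only clear \emph{a posteriori}; both issues disappear if you instead use the decomposition $\e_{+i}\e_{+i}\e_{+j}\idm_{\l}\oplus\e_{+j}\e_{+i}\e_{+i}\idm_{\l}\cong \e_{+i}\e_{+j}\e_{+i}\idm_{\l}\{1\}\oplus\e_{+i}\e_{+j}\e_{+i}\idm_{\l}\{-1\}$, which already holds in $\u_Q(\mathfrak{sl}_m)$.) Where you genuinely diverge is injectivity. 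Khovanov and Lauda prove it by showing that the Euler form on $K_0$ pulls back to Lusztig's nondegenerate $q$-semilinear form on $\U{m}$, the required graded dimensions of $\HOM$-spaces being bounded below via their flag-variety $2$-representation $\Gamma_d$ and above by a diagrammatic spanning argument --- i.e.\ precisely the ``computationally heavier alternative'' you sketch in your last sentence. Your primary route, composing with the cyclotomic $2$-representations, invoking Brundan--Kleshchev's identification $K_0^q(\mathcal{V}_\Lambda^p)\cong V_\Lambda$, and using faithfulness of $\bigoplus_\Lambda V_\Lambda$, is also valid (and is how some later treatments argue), but it is not more economical: it imports the full strength of~\cite{bk}, which rests on the graded isomorphism with cyclotomic Hecke algebras and graded branching rules. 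It does have the virtue of avoiding any computation of $\HOM$-spaces in $\u_Q(\mathfrak{sl}_m)$, and, as you implicitly note, it is not circular, since Brundan--Kleshchev's theorem is established independently of the present statement and the $\U{m}$-module structure on $K_0^q(\mathcal{V}_\Lambda^p)$ only requires your step \emph{(i)}.
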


\subsection{Cyclotomic KLR algebras and $2$-representations}\label{sec:cyclKLR}
Let $\Lambda$ be a dominant $\mathfrak{sl}_m$-weight, 
$V_{\Lambda}$ the irreducible $\U{m}$-module of highest 
weight $\Lambda$ and $P_{\Lambda}$ the set of weights in $V_{\Lambda}$.
 
\begin{definition}[Khovanov-Lauda, Rouquier]\label{def:cyclKLR}
The {\em cyclotomic KLR algebra} $R_{\Lambda}$ is 
the subquotient of $\u_Q(\mathfrak{sl}_m)$ 
defined by the subalgebra of all diagrams with only downward 
oriented strands and right-most region labeled $\Lambda$ 
modded out by the ideal generated by diagrams of the form  
$$
\txt{\input{figure/cyclo-rel}}
$$
\end{definition}
Note that 
$$R_{\Lambda}=\bigoplus_{\mu\in P_{\Lambda}} R_{\Lambda}(\mu),$$
where $R_{\Lambda}(\mu)$ is the subalgebra generated by all diagrams 
whose left-most region is labeled $\mu$. Brundan and Kleshchev proved that 
$R_{\Lambda}$ is finite-dimensional in Corollary 2.2 in~\cite{bk2}.  
We also define 
$$\mathcal{V}_{\Lambda}^p:=R_{\Lambda}-\mathrm{pmod}_{\mathrm{gr}}.$$

Below we will use Cautis and Lauda's language of strong $\mathfrak{sl}_m$ 
$2$-representations (see Definition 1.2 in~\cite{cl}). For a comparison 
with Rouquier's~\cite{rou} definition of a 
Kac-Moody $2$-representation, see Cautis and Lauda's remark (1) below their 
Definition 1.2. Since we always use the same choice of $Q$ in this paper, 
which we specified above, we call the $2$-representations below simply 
{\em strong $2$-representations}.    

In Section 4.4 in~\cite{bk} Brundan and Kleshchev defined a strong 
$\mathfrak{sl}_m$ $2$-representation on $\mathcal{V}_{\Lambda}$, which can 
be restricted to $\mathcal{V}_{\Lambda}^p$.

Brundan and Kleshchev proved the following theorem (Proposition 4.16 and 
Theorem 4.18 in~\cite{bk}), which was 
conjectured by Khovanov and Lauda~\cite{kl1}.  
 
\begin{theorem}[Brundan-Kleshchev]\label{thm:cyclKLR}
There exists an isomorphism  
$$\delta\colon V_{\Lambda}\to K_0^{q}(\mathcal{V}_{\Lambda}^p)$$
of $\U{m}$-modules. 

Moreover, this isomorphism maps intertwines the $q$-Shapovalov form and 
the Euler form. 
\end{theorem}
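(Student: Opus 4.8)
The theorem is due to Brundan and Kleshchev, so the plan is to recall the structure of their argument, built from the strong $2$-representation above. First I would note that the strong $2$-representation on $\mathcal{V}_{\Lambda}$ restricts to $\mathcal{V}_{\Lambda}^p$, so the functors $\e_{+i}$ and $\e_{-i}$ descend to operators $E_{+i}$ and $E_{-i}$ on $K_0^q(\mathcal{V}_{\Lambda}^p)$, while the grading shift $\{1\}$ supplies the $\C(q)$-action; this makes $K_0^q(\mathcal{V}_{\Lambda}^p)$ a $\U{m}$-module. For the weight $\mu=\Lambda$ there are no strands, so $R_{\Lambda}(\Lambda)$ is spanned by the empty diagram alone, is isomorphic to $\C$ in degree $0$, and regarded as a left $R_{\Lambda}$-module it is a rank-one indecomposable projective. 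Since the raising functor $\e_{+i}$ sends the weight-$\Lambda$ component of $\mathcal{V}_{\Lambda}^p$ into its weight-$(\Lambda+i')$ component, which is zero because $\Lambda$ is the highest weight, one has $E_{+i}[R_{\Lambda}(\Lambda)]=0$ for every $i$. By the universal property of the irreducible module $V_{\Lambda}$, the assignment $v_{\Lambda}\mapsto[R_{\Lambda}(\Lambda)]$ therefore extends to a homomorphism $\delta\colon V_{\Lambda}\to K_0^q(\mathcal{V}_{\Lambda}^p)$ of $\U{m}$-modules.

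Surjectivity of $\delta$ is the easy half: the $\e_{-i}$ send projectives to projectives, and every indecomposable projective $R_{\Lambda}$-module occurs, up to a grading shift, as a direct summand of $\e_{\uj}\cdot R_{\Lambda}(\Lambda)$ for a suitable sequence $\uj$ of lowering functors, so $[R_{\Lambda}(\Lambda)]$ generates $K_0^q(\mathcal{V}_{\Lambda}^p)$ over $\U{m}$. The main obstacle is injectivity. Since $R_{\Lambda}$ is finite-dimensional (Corollary~2.2 in~\cite{bk2}), each weight space of $K_0^q(\mathcal{V}_{\Lambda}^p)$ has finite rank, and injectivity reduces to the equality of graded dimensions $\dim_q K_0^q(\mathcal{V}_{\Lambda}^p)(\mu)=\dim_q V_{\Lambda}(\mu)$ for every $\mu\in P_{\Lambda}$. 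This is the substantive part of the Brundan--Kleshchev theorem, and I would simply cite it: one identifies $R_{\Lambda}$ with a block of a cyclotomic Hecke algebra through the graded isomorphism theorem of~\cite{bk} and then invokes Ariki's categorification theorem, or one compares the explicit graded dimension formulae for $R_{\Lambda}$ against the weight-space dimensions of $V_{\Lambda}$ given by the Weyl character formula. Granting this, $\delta$ is a degree-preserving isomorphism of graded $\U{m}$-modules.

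It remains to check that $\delta$ intertwines the $q$-Shapovalov form on $V_{\Lambda}$ with the Euler form on $K_0^q(\mathcal{V}_{\Lambda}^p)$, and for this I would appeal to the uniqueness of the form satisfying the two axioms recalled before the statement. The normalisation axiom is immediate: $\langle[R_{\Lambda}(\Lambda)],[R_{\Lambda}(\Lambda)]\rangle=\dim_q\HOM(R_{\Lambda}(\Lambda),R_{\Lambda}(\Lambda))=\dim_q R_{\Lambda}(\Lambda)=1$. For the $\tau$-adjointness $\langle xv,v'\rangle=\langle v,\tau(x)v'\rangle$ it suffices to treat the generators $x=E_{\pm i}$, and there the biadjunction of $\e_{+i}$ and $\e_{-i}$ in the $2$-category produces, on the relevant pair of weight spaces, natural isomorphisms of the form $\HOM(\e_{+i}M,M')\cong\HOM(M,\e_{-i}M')\{\lambda_i+1\}$ and $\HOM(\e_{-i}M,M')\cong\HOM(M,\e_{+i}M')\{-\lambda_i+1\}$, the shifts being exactly those attached to the cup and cap $2$-morphisms in Definition~\ref{def:KL}. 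Passing to $q$-dimensions, and using that the $\C(q)$-action on $K_0^q$ is the grading shift, these shifts turn into precisely the powers of $q$ that appear in the definition of $\tau$; this is the delicate sign-and-shift bookkeeping that fixes the normalisation of $\tau$ in the first place. Hence the Euler form, transported along $\delta$, satisfies both defining properties of the $q$-Shapovalov form, and by uniqueness the two forms coincide.
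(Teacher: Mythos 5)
The paper does not actually prove this statement: it is quoted from Brundan and Kleshchev (Proposition 4.16 and Theorem 4.18 of \cite{bk}), so there is no internal proof to compare against. Your sketch is nevertheless a faithful reconstruction of the standard argument, and it mirrors how the paper handles the analogous situation later via Proposition~\ref{prop:Rouquier1} and Theorem~\ref{thm:categorification1}. Two remarks. First, the phrase ``by the universal property of the irreducible module $V_{\Lambda}$'' hides the one genuinely necessary input at that step: a highest weight vector only gives you a quotient of the Verma module, and to know that this quotient is $V_{\Lambda}$ you must use that the weights of $K_0^q(\mathcal{V}_{\Lambda}^p)$ are confined to the finite set $P_{\Lambda}$ (the decomposition $R_{\Lambda}=\bigoplus_{\mu\in P_{\Lambda}}R_{\Lambda}(\mu)$ recalled in Section~\ref{sec:cyclKLR}), so that $E_{\pm i}$ act locally nilpotently and the cyclic submodule generated by $[R_{\Lambda}(\Lambda)]$ is integrable, hence isomorphic to $V_{\Lambda}$. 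Second, once $\delta$ is constructed this way, injectivity is automatic --- the kernel is a proper submodule of the irreducible $V_{\Lambda}$ since $\delta(v_{\Lambda})=[R_{\Lambda}(\Lambda)]\neq 0$ --- so the graded-dimension comparison you defer to Ariki's theorem is not needed for injectivity; the hard content is instead concentrated in the existence of the strong $2$-representation on $\mathcal{V}_{\Lambda}^p$ and in the weight-space decomposition of $R_{\Lambda}$, both of which you (like the paper) take as given from \cite{bk,bk2}. Your treatment of the bilinear forms --- normalisation at the highest weight, $\tau$-adjointness extracted from the graded biadjunction of $\e_{+i}$ and $\e_{-i}$, then uniqueness of the $q$-Shapovalov form --- is correct and is exactly the mechanism used by Brundan and Kleshchev.
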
 

Rouquier proved that $R_{\Lambda}$ is the universal categorification of 
$V_{\Lambda}$ in the following sense. For each 
$\mu\in P_{\Lambda}$, let $\mathcal{C}(\mu)$ be a graded Krull-Schmidt 
$\C$-linear category with finite-dimensional hom-spaces. Take 
$$\mathcal{C}_{\Lambda}:=\bigoplus_{\mu\in P_{\Lambda}} C(\mu).$$
For a proof of the following 
result, see Lemma 5.4, Proposition 5.6 and Corollary 5.7 in~\cite{rou}. 
\begin{proposition}[Rouquier's Universality Proposition, additive version]
\label{prop:Rouquier1}
Suppose that 
\begin{itemize}
\item $\mathcal{C}_{\Lambda}$ is a strong 
$2$-representation of $\mathfrak{sl}_{m}$ by $\C$-linear functors;
\item There exists an indecomposable object $V(\Lambda)\in 
\mathcal{C}(\Lambda)$ such that $\mathcal{E}_{+i}V(\Lambda)=0$, 
for any $i=1,\ldots,m-1$, and $\mathrm{End}(V(\Lambda))\cong\mathbb{C}$; 
\item any object in $\mathcal{C}_{\Lambda}$  
is a direct summand of $XV(\Lambda)$, for some $1$-morphism 
$X\in \u_Q(\mathfrak{sl}_{m})$.
\end{itemize} 
Then there exists an equivalence 
$$\mathcal{V}_{\Lambda}^p\to \mathcal{C}_{\Lambda}$$ 
of additive strong $\mathfrak{sl}_m$ $2$-representations.

In particular, we have 
$$V_{\Lambda}\cong K_0^{q}(R_{\Lambda})\cong K_0^{q}(\mathcal{C}_{\Lambda})$$
as $\U{m}$-modules.
\end{proposition}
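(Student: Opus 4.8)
The plan is to reconstruct Rouquier's equivalence explicitly as an additive functor $F\colon\mathcal{V}_\Lambda^p\to\mathcal{C}_\Lambda$ and then to verify it is an equivalence of strong $2$-representations. Recall from Definition~\ref{def:cyclKLR} that the underlying category of $\mathcal{V}_\Lambda^p$ has as objects the summands of the graded modules $R_\Lambda e_{\ui}\{t\}$, where $e_{\ui}$ is the KLR idempotent attached to a negative signed sequence $\ui$ landing in a weight $\mu\in P_\Lambda$, and that $\Hom_{R_\Lambda}(R_\Lambda e_{\ui},R_\Lambda e_{\uj})\cong e_{\uj}R_\Lambda e_{\ui}$ is spanned by downward KLR diagrams modulo the cyclotomic ideal. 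Since $\mathcal{C}_\Lambda$ carries a strong $2$-representation of $\mathfrak{sl}_m$, the whole $2$-category $\u_Q(\mathfrak{sl}_m)$ acts on it: each $\mathcal{E}_{-i}$ acts by a functor, each generating $2$-morphism of Definition~\ref{def:KL} (dot, crossing, cup, cap) acts by a natural transformation, and these satisfy the KLR and $\mathfrak{sl}_2$ relations. I would define $F$ on generators by $R_\Lambda e_{\ui}\mapsto\mathcal{E}_{\ui}V(\Lambda)$, where $\mathcal{E}_{\ui}$ is the composite of the corresponding $\mathcal{E}_{-i_a}$'s, send a downward KLR diagram $x\in e_{\uj}R_\Lambda e_{\ui}$ to the evaluation on $V(\Lambda)$ of the corresponding $2$-morphism of $\u_Q(\mathfrak{sl}_m)$, and then extend additively, $\Z$-gradedly, and to the Karoubi envelope.

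The first step is well-definedness, i.e.\ that $F$ kills the cyclotomic relations of Definition~\ref{def:cyclKLR} (the pure KLR relations being automatic). For this I would use the biadjunction between $\mathcal{E}_{-i}$ and $\mathcal{E}_{+i}$ to compute
\[
\HOM_{\mathcal{C}_\Lambda}\bigl(\mathcal{E}_{-i}V(\Lambda),\mathcal{E}_{-i}V(\Lambda)\bigr)\ \cong\ \HOM_{\mathcal{C}_\Lambda}\bigl(V(\Lambda),\mathcal{E}_{+i}\mathcal{E}_{-i}V(\Lambda)\bigr)
\]
up to a grading shift, and then the $\mathfrak{sl}_2$ relation $\mathcal{E}_{+i}\mathcal{E}_{-i}\idm_\lambda\cong\mathcal{E}_{-i}\mathcal{E}_{+i}\idm_\lambda\oplus\idm_\lambda^{\oplus[\lambda_i]}$ (valid since $\lambda_i=\Lambda_i\geq 0$) together with $\mathcal{E}_{+i}V(\Lambda)=0$ and $\End(V(\Lambda))\cong\C$ to identify this graded ring with $\C[y]/(y^{\Lambda_i})$, $y$ being the dot $2$-morphism. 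Hence $y^{\Lambda_i}$ acts as $0$ on $\mathcal{E}_{-i}V(\Lambda)$; nesting this inside an arbitrary $\mathcal{E}_{\uj}(-)$ and sliding the dot to the outermost strand via the triangle/bubble moves of Definition~\ref{def:KL} shows that every generator of the cyclotomic ideal dies under $F$, so $F$ descends to $\mathcal{V}_\Lambda^p$.

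For essential surjectivity and fullness I would use the commutation relations $\mathcal{E}_{+i}\mathcal{E}_{-j}\idm_\lambda\cong\mathcal{E}_{-j}\mathcal{E}_{+i}\idm_\lambda$ ($i\neq j$) and $\mathcal{E}_{+i}\mathcal{E}_{-i}\idm_\lambda\cong\mathcal{E}_{-i}\mathcal{E}_{+i}\idm_\lambda\oplus\idm_\lambda^{\oplus[\lambda_i]}$ (and its mirror for $\lambda_i\leq 0$) to rewrite, for any $1$-morphism $X\in\u_Q(\mathfrak{sl}_m)$, the object $XV(\Lambda)$ as a finite direct sum of objects $\mathcal{E}_{\vec{b}}V(\Lambda)$ with $\vec{b}$ negative: one pushes every $\mathcal{E}_{+i}$ to the right, where it either annihilates $V(\Lambda)$ or is traded for shifted identities, the process terminating because each such move lowers the number of ``$\mathcal{E}_{+}$-before-$\mathcal{E}_{-}$'' inversions. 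By the third hypothesis every object of $\mathcal{C}_\Lambda$ is a summand of some $XV(\Lambda)$, so after Karoubi completion it lies in the essential image of $F$; and the same rewriting together with the fact that the $2$-morphisms of Definition~\ref{def:KL} generate all $2$-morphisms of $\u_Q(\mathfrak{sl}_m)$ shows that $F$ is full.

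The main obstacle is faithfulness of $F$, which I would settle by a graded-dimension count. The same adjunction-plus-straightening argument gives $\HOM_{\mathcal{C}_\Lambda}(\mathcal{E}_{\ui}V(\Lambda),\mathcal{E}_{\uj}V(\Lambda))\cong\HOM_{\mathcal{C}_\Lambda}(V(\Lambda),\mathcal{E}_{\ui}^{\vee}\mathcal{E}_{\uj}V(\Lambda))$, where $\mathcal{E}_{\ui}^{\vee}$ is the adjoint $1$-morphism; straightening $\mathcal{E}_{\ui}^{\vee}\mathcal{E}_{\uj}\idm_\Lambda$ into a direct sum of shifted composites with all $\mathcal{E}_{-}$'s to the left of all $\mathcal{E}_{+}$'s and then applying it to $V(\Lambda)$ (killed by every $\mathcal{E}_{+i}$, living in weight $\Lambda$) leaves only copies of $V(\Lambda)$ itself, with graded multiplicity $f_{\ui,\uj}(q)\in\N[q,q^{-1}]$ determined purely by $\Lambda$, $\ui$, $\uj$ and the $\mathfrak{sl}_2$ relations — hence independent of $\mathcal{C}_\Lambda$. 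Since $\End(V(\Lambda))\cong\C$ this gives $\dim_q\HOM_{\mathcal{C}_\Lambda}(\mathcal{E}_{\ui}V(\Lambda),\mathcal{E}_{\uj}V(\Lambda))=f_{\ui,\uj}(q)$, and specialising to $\mathcal{C}_\Lambda=\mathcal{V}_\Lambda^p$, where by Theorem~\ref{thm:cyclKLR} the Euler form is identified with the $q$-Shapovalov form on $V_\Lambda$, also yields $\dim_q\,e_{\uj}R_\Lambda e_{\ui}=f_{\ui,\uj}(q)$. Thus $F$ is a surjection between graded $\Hom$-spaces of the same finite graded dimension, hence an isomorphism, so $F$ is fully faithful and, being essentially surjective, an equivalence of additive strong $2$-representations. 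The displayed isomorphisms then follow: $V_\Lambda\cong K_0^q(R_\Lambda)$ is Theorem~\ref{thm:cyclKLR}, and $K_0^q(R_\Lambda)\cong K_0^q(\mathcal{C}_\Lambda)$ by applying $K_0^q$ to $F$. The delicate point throughout — and the reason this is genuinely Rouquier's theorem — is the bookkeeping in the $\mathfrak{sl}_2$ straightening and the verification that $f_{\ui,\uj}(q)$ is $\mathcal{C}_\Lambda$-independent and equal to the Shapovalov pairing; alternatively one follows Rouquier's route of building the minimal categorification directly and reducing faithfulness to that of the $\mathfrak{sl}_2$-subquotients (cyclotomic nilHecke algebras).
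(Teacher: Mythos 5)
The paper offers no proof of this proposition at all: it defers entirely to Rouquier, citing Lemma 5.4, Proposition 5.6 and Corollary 5.7 of \cite{rou}, so there is no in-paper argument to measure yours against. Your reconstruction follows what is essentially Rouquier's (and Cautis--Lauda's and Webster's) route: define $F$ by evaluating $1$- and $2$-morphisms of $\u_Q(\mathfrak{sl}_m)$ at the highest weight object $V(\Lambda)$; kill the cyclotomic relation of Definition~\ref{def:cyclKLR} by computing $\END(\mathcal{E}_{-i}V(\Lambda))$ via the biadjunction and the categorified commutation relation together with $\mathcal{E}_{+i}V(\Lambda)=0$; obtain essential surjectivity from the third hypothesis plus straightening; and obtain fully-faithfulness from a graded dimension count that is independent of $\mathcal{C}_{\Lambda}$. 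This is sound as a sketch, and it correctly isolates where the real work sits: the direct sum decompositions $\mathcal{E}_{+i}\mathcal{E}_{-i}\idm_{\lambda}\cong\mathcal{E}_{-i}\mathcal{E}_{+i}\idm_{\lambda}\oplus\idm_{\lambda}^{\oplus[\lambda_i]}$ in the Karoubi envelope of $\u_Q(\mathfrak{sl}_m)$, realized by explicit $2$-morphisms, which you cite rather than prove and which are exactly what make the multiplicities $f_{\ui,\uj}(q)$ model-independent and equal to the $q$-Shapovalov pairing.

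Two soft spots are worth tightening. First, the cyclotomic relation places the $\Lambda_{i_t}$ dots on the \emph{rightmost} strand, which under evaluation corresponds to the \emph{innermost} functor application $\mathcal{E}_{-i_t}V(\Lambda)$; once the degree bound forces $y^{\Lambda_{i_t}}=0$ there, functoriality of $\mathcal{E}_{-i_1}\cdots\mathcal{E}_{-i_{t-1}}$ finishes the job, and no sliding of dots via triangle or bubble moves is needed (nor would it point in the right direction). Second, your dimension count, both for $\C[y]/(y^{\Lambda_i})$ and for faithfulness, uses that $\END(V(\Lambda))\cong\C$ in \emph{all} degrees, whereas the hypothesis as written only concerns degree-zero endomorphisms; this is how the hypothesis must be read (it holds on the target side since $R_{\Lambda}(\Lambda)\cong\C$), and you should say so explicitly. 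Neither point is fatal, and the closing isomorphisms of Grothendieck groups do follow, as you say, from Theorem~\ref{thm:cyclKLR} together with $K_0^q$ applied to the equivalence.
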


%
%
%
%
\section{Some morphisms in $\HMF$}\label{sec:somemorphisms}
In this section, we recall some useful morphisms between matrix 
factorizations. 
\vskip0.5cm
As before, let $R$ be a graded polynomial ring and $p_a$, $q_a$ ($a=1,...,l$) 
be polynomials in $R$. Consider the Koszul matrix factorization 
$$
\btime{\,\,R}_{a=1}^{l}K(p_a;q_a)_{R}.
$$
Recall the following result which we use frequently in the following 
sections. The proof follows directly from the definitions.   
\begin{proposition}\label{hom-eq}
Multiplication by $p_a$ or $q_a$ defines an endomorphism of 
$\btime{\,\,R}_{a=1}^{l}K(p_a;q_a)_{R}$ which is homotopic to $0$. 
\end{proposition}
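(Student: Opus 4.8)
The plan is to work directly from the definition of the tensor product of Koszul matrix factorizations and exhibit an explicit null-homotopy. Recall that $K(p_a;q_a)_R = (R, R\{\ast\}, p_a, q_a)$, so the total factorization $\btime{\,R}_{a=1}^l K(p_a;q_a)_R$ has underlying module the Koszul complex $\bigwedge^\bullet R^l$ with generators $\theta_1,\dots,\theta_l$ (one per factor), where $\theta_a$ corresponds to the ``$M_1$'' summand of the $a$-th factor. The differential splits as $d = d^+ + d^-$, where $d^+ = \sum_a p_a\, \iota_{\theta_a}$ contracts with $\theta_a$ (mapping the $M_1$-slot to the $M_0$-slot of factor $a$) and $d^- = \sum_a q_a\, \theta_a\wedge(-)$ wedges with $\theta_a$. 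One checks from the sign conventions in the displayed tensor-product formula that $(d^+)^2 = (d^-)^2 = 0$ and $d^+d^- + d^-d^+ = (\sum_a p_a q_a)\,\mathrm{id}$, which is exactly the matrix-factorization identity with potential $\sum_a p_a q_a$.

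Next, fix an index $a$ and consider the homotopy $h := \theta_a\wedge(-)$, i.e.\ wedging with the $a$-th Koszul generator. Since $d^-= \sum_b q_b\,\theta_b\wedge(-)$ already consists of wedge operators, $h$ anticommutes with each summand of $d^-$ (as $\theta_a\wedge\theta_b = -\theta_b\wedge\theta_a$), so $d^- h + h d^- = 0$. For the $d^+$ part, the standard Koszul identity $\iota_{\theta_b}(\theta_a\wedge \xi) + \theta_a\wedge \iota_{\theta_b}(\xi) = \delta_{ab}\,\xi$ gives, after multiplying by $p_b$ and summing over $b$,
\[
d^+ h + h d^+ = \sum_b p_b\bigl(\iota_{\theta_b}(\theta_a\wedge -) + \theta_a\wedge\iota_{\theta_b}(-)\bigr) = p_a\,\mathrm{id}.
\]
Therefore $dh + hd = (d^+ + d^-)h + h(d^+ + d^-) = p_a\,\mathrm{id}$, which says precisely that multiplication by $p_a$ on $\btime{\,R}_{a=1}^l K(p_a;q_a)_R$ is null-homotopic with explicit homotopy $h=\theta_a\wedge(-)$. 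The statement for $q_a$ is entirely symmetric: take instead $h' := \iota_{\theta_a}$ (contraction with $\theta_a$), which anticommutes with the contraction part $d^+$ and satisfies $d^- h' + h' d^- = q_a\,\mathrm{id}$ by the same Koszul identity with the roles of wedge and contraction interchanged; hence $dh' + h'd = q_a\,\mathrm{id}$.

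There is essentially no obstacle here beyond bookkeeping: the only delicate point is to verify that the sign conventions in the displayed formula for $\widehat{M}\btime{S}\widehat{N}$ do produce the clean anticommuting decomposition $d = d^+ + d^-$ with the Koszul identities above, rather than a differential twisted by signs that would break the computation. I would spend a line pinning down that the $-d_{N_1}$, $-d_{N_0}$ entries are exactly what is needed for $\theta_a\wedge(-)$ and $\iota_{\theta_a}$ to square to zero and to anticommute appropriately with the $p_b$'s and $q_b$'s; once that is checked for the two-factor case and propagated by induction on $l$, the null-homotopies are immediate. Since the paper says the proof ``follows directly from the definitions,'' I would keep the write-up to the identification $dh+hd = p_a\,\mathrm{id}$ (resp.\ $dh'+h'd = q_a\,\mathrm{id}$) and omit the routine sign verifications.
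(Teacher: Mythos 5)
Your proof is correct and is exactly the standard argument the paper has in mind when it says the result ``follows directly from the definitions'': the Koszul differential splits into a wedge part and a contraction part, and wedging with (resp.\ contracting against) the $a$-th exterior generator gives an explicit null-homotopy for multiplication by one of the two polynomials of the $a$-th factor. The only slip is a convention mismatch: since the paper sets $d_{M_0}=p_a$ (so $p_a$ acts by wedging $\Lambda^0\to\Lambda^1$ and $q_a$ by contraction), your two homotopies are attached to the wrong polynomials --- it is $\iota_{\theta_a}$ that null-homotopes $p_a$ and $\theta_a\wedge(-)$ that null-homotopes $q_a$ --- but as the statement is symmetric in $p_a$ and $q_a$ and you prove both identities, the conclusion is unaffected.
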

%
%
%
%
\subsection{Morphism in $\HMF$ (1)}\label{sec-mor1}
We consider the following diagrams:
\begin{center}
$
\begin{array}{ccc}
\input{figure/diff1}&\hspace{3cm}\input{figure/diff2}&\hspace{3cm}\input{figure/diff3}\\
L_{(1;2)}^{[k+1]}\hspace{3cm}&\hspace{3cm}\Gamma_1&\hspace{3cm}\Gamma_2.
\end{array}
$
\end{center}
The matrix factorization $\widehat{\Gamma}_1$ is isomorphic to 
\begin{gather}
\label{isom-bubble}
\widehat{\Lambda}_{(1;3,4)}^{[1,k]}\btime{R_{(3,4)}^{(1,k)}}\hspace{-0.1cm} 
\widehat{V}_{(3,4;2)}^{[1,k]}\simeq\\
\widehat{L}_{(1;2)}^{[k+1]}\otimes_{R_{(1,2)}^{(k+1,k+1)}}\left(R_{(1,2,3,4)}^{(k+1,k+1,1,k)}
/J_{(3,4)}^{(1,k)}\right)\{-k\}
\simeq\left(\widehat{L}_{(1;2)}^{[k+1]}\right)^{\oplus [k+1]_q}
\end{gather}
where $J_{(3,4)}^{(1,k)}=\left<X_{1,(3,4)}^{(1,k)},...,X_{k+1,(3,4)}^{(1,k)}\right>$.
We have two $R_{(1,2)}^{(k,k)}$-module morphisms of degree $-k$
\begin{eqnarray}
\nonumber
\xymatrix@R=.5pc{
I_{(4,3)} :& R_{(1,2)}^{(k,k)} \ar[r]\ar@{}[d]|-{\rotatebox{90}{$\in$}}& R_{(1,2,3,4)}^{(k+1,k+1,1,k)}\left/J_{(3,4)}^{(1,k)}\{-k\}\right.\ar@{}[d]|-{\rotatebox{90}{$\in$}} \\
&1\ar@{|->}[r]&1\\ \\
D_{(4,3)} :& R_{(1,2,3,4)}^{(k+1,k+1,1,k)}\left/J_{(3,4)}^{(1,k)}\{-k\}\right.\ar[r]\ar@{}[d]|-{\rotatebox{90}{$\in$}}& R_{(1,2)}^{(k,k)}\ar@{}[d]|-{\rotatebox{90}{$\in$}}\\
&f\ar@{|->}[r]&\partial_{t_{1,4}t_{1,3}}\partial_{t_{2,4}t_{1,3}}...\partial_{t_{k,4}t_{1,3}}f ,
}
\end{eqnarray}
where $\partial_{t_{j,4}t_{1,3}}g(t_{1,3},t_{1,4},...,t_{j,4},...,t_{k,4})$ is defined by
$$
\frac{g(t_{1,3},...,t_{j,4},...)-g(t_{j,4},...,t_{1,3},...)}{t_{j,4}-t_{1,3}}.
$$
Using \eqref{isom-bubble}, we can extend 
the maps ${I_{(4,3)}}$ and ${D_{(4,3)}}$ to morphisms of 
matrix factorizations of degree $-k$
\begin{eqnarray}
\widehat{I}_{(1,k)}&:&\widehat{L}_{(1;2)}^{[k+1]}\longrightarrow\widehat{\Gamma_1},\\
\widehat{D}_{(1,k)}&:&\widehat{\Gamma_1}\longrightarrow\widehat{L}_{(1;2)}^{[k+1]}.
\end{eqnarray}
Since $\Gamma_2$ is symmetric to $\Gamma_1$, we also have
\begin{eqnarray}
\widehat{I}_{(k,1)}&:&\widehat{L}_{(1;2)}^{[k+1]}\longrightarrow\widehat{\Gamma_2},\\
\widehat{D}_{(k,1)}&:&\widehat{\Gamma_2}\longrightarrow\widehat{L}_{(1;2)}^{[k+1]}.
\end{eqnarray}
%
%
%
%
\subsection{Morphisms in $\HMF$ (2)}\label{sec-mor2}
We consider the following diagrams:
\begin{center}
$
\begin{array}{cccc}
\input{figure/zip2}\hspace{1.5cm}&\hspace{1.5cm}\input{figure/zip1}\hspace{1.5cm}&\hspace{1.5cm}\input{figure/zip4}\hspace{1.5cm}&\hspace{1.5cm}\input{figure/zip3}\hspace{1.5cm}\\
\Gamma_3\hspace{1.5cm}&\hspace{1.5cm}L^{[1]}_{(1;3)}\sqcup L^{[k]}_{(2;4)}\hspace{1.5cm}&\hspace{1.5cm}\Gamma_4\hspace{1.5cm}&\hspace{1.5cm}L^{[k]}_{(1;3)}\sqcup L^{[1]}_{(2;4)}\hspace{1.5cm}
\end{array}
$
\end{center}
We have 
\begin{gather}
\label{eq:Gamma31}
\widehat{\Gamma}_3=\widehat{\Lambda}_{(5;3,4)}^{[1,k]}\btime{R_{(5)}^{(k+1)}} \widehat{V}_{(1,2;5)}^{[1,k]}\simeq\\
\nonumber\widehat{S}_{(1,2;3,4)}\btime{R_{(1,2,3,4)}^{(1,k,1,k)}} K(p_{k+1};(t_{1,1}-t_{1,3})X_{k,(2,3)}^{(k,-1)}))_{R_{(1,2,3,4)}^{(1,k,1,k)}}\{-k\}
\end{gather}
and
\begin{equation}
\label{eq:LkL1}
\widehat{L}_{(1;3)}^{[1]}\btime{\C} \widehat{L}_{(2;4)}^{[k]}
\simeq \widehat{S}_{(1,2;3,4)}\btime{R_{(1,2,3,4)}^{(1,k,1,k)}} K(p_{k+1}X_{k,(2,3)}^{(k,-1)};(t_{1,1}-t_{1,3}))_{R_{(1,2,3,4)}^{(1,k,1,k)}}
\end{equation}
where
\begin{equation*}
\widehat{S}_{(1,2;3,4)}=\quad\boxtimes_{a=1}^{k}K(p_a;(t_{1,1}-t_{1,3})X_{a-1,(2,3)}^{(k,-1)}+x_{a,2}-x_{a,4})_{R_{(1,2,3,4)}^{(1,k,1,k)}}
\end{equation*}
with
\begin{eqnarray*}
p_a&=&\dfrac{P_{N+1}(...,X_{a-1,(3,4)}^{(1,k )},X_{a,(1,2)}^{(1,k )},...)-P_{N+1}(...,X_{a,(3,4)}^{(1,k)},X_{a+1,(1,2)}^{(1,k)},...)}{X_{a,(1,2)}^{(1,k)}-X_{a,(3,4)}^{(1,k)}}\\
&+&t_{1,3}\frac{P_{N+1}(...,X_{a,(3,4)}^{(1,k )},X_{a+1,(1,2)}^{(1,k )},...)-P_{N+1}(...,X_{a+1,(3,4)}^{(1,k)},X_{a+2,(1,2)}^{(1,k)},...)}{X_{a+1,(1,2)}^{(1,k)}-X_{a+1,(3,4)}^{(1,k)}}\\
\end{eqnarray*}
for $1\leq a\leq k$, and 
\begin{eqnarray*}
p_{k+1}&=&\dfrac{P_{N+1}(...,X_{k,(3,4)}^{(1,k )},X_{k+1,(1,2)}^{(1,k )})-P_{N+1}(...,X_{k,(3,4)}^{(1,k)},X_{k+1,(3,4)}^{(1,k)})}{X_{k+1,(1,2)}^{(1,k)}-X_{k+1,(3,4)}^{(1,k)}}.
\end{eqnarray*}

By~\eqref{eq:Gamma31} and~\eqref{eq:LkL1}, 
the morphisms of degree $k$
\begin{eqnarray*}
(1,X_{k(2,3)}^{(k,-1)})&:&K(p_{k+1};(t_{1,1}-t_{1,3})X_{k,(2,3)}^{(k,-1)}))_{R_{(1,2,3,4)}^{(1,k,1,k)}}\{-k\}\\
&&\to K(p_{k+1}X_{k,(2,3)}^{(k,-1)};t_{1,1}-t_{1,3}))_{R_{(1,2,3,4)}^{(1,k,1,k)}}\\
(X_{k(2,3)}^{(k,-1)},1)&:&K(p_{k+1}X_{k,(2,3)}^{(k,-1)};t_{1,1}-t_{1,3}))_{R_{(1,2,3,4)}^{(1,k,1,k)}}\\
&&\to K(p_{k+1};(t_{1,1}-t_{1,3})X_{k,(2,3)}^{(k,-1)}))_{R_{(1,2,3,4)}^{(1,k,1,k)}}\{-k\}.
\end{eqnarray*}
induce morphisms of degree $k$ between $\widehat{\Gamma}_3$ and 
$\widehat{L}_{(1;3)}^{[1]}\btime{\C} \widehat{L}_{(2;4)}^{[k]}$ 
\begin{gather*}
\widehat{U}_{(k,1)}\colon \widehat{\Gamma}_3\xrightarrow{\id_{\widehat{S}}
\boxtimes( 1,X_{k(2,3)}^{(k,-1)})}
\widehat{L}_{(1;3)}^{[1]}\btime{\C} \widehat{L}_{(2;4)}^{[k]}\\[1em]
\widehat{Z}_{(k,1)}\colon 
\widehat{L}_{(1;3)}^{[1]}\btime{\C} \widehat{L}_{(2;4)}^{[k]}
\xrightarrow{\id_{\widehat{S}}\boxtimes ( X_{k(2,3)}^{(k,-1)},1)}
\widehat{\Gamma}_3.
\end{gather*}
Using Proposition \ref{ext-hom}, we get 
\begin{proposition}
\quad\\[-1.5em]
\begin{itemize}
\item[(1)] The morphism $\widehat{U}_{(k,1)}$ induces the 
$R^{(1,k,1,k)}_{(1,2,3,4)}$-linear map in 
$$\EXT^0(\widehat{\Gamma}_3,\widehat{L}_{(1;3)}^{[1]}\btime{\C} 
\widehat{L}_{(2;4)}^{[k]})$$ determined by sending $1$ to $1$.
\item[(2)] The morphism $\widehat{Z}_{(k,1)}$ induces 
the $R^{(1,k,1,k)}_{(1,2,3,4)}$-linear map in 
$$\EXT^0(\widehat{L}_{(1;3)}^{[1]}\btime{\C} \widehat{L}_{(2;4)}^{[k]},
\widehat{\Gamma}_3)$$ determined by multiplying with $X_{k(2,3)}^{(k,-1)}$.
\end{itemize}
\end{proposition}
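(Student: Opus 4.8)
The plan is to pass the question into $\HOM_{\HMF}$ via Proposition~\ref{ext-hom}, use the tensor decompositions~\eqref{eq:Gamma31} and~\eqref{eq:LkL1} to strip off the common factor $\widehat{S}_{(1,2;3,4)}$, and reduce to a direct computation with a single rank-one Koszul matrix factorization. Concretely, by Proposition~\ref{ext-hom} the chain maps $\widehat{U}_{(k,1)}$ and $\widehat{Z}_{(k,1)}$ represent classes in $\EXT^0\simeq\HOM_{\HMF}$, and the $R^{(1,k,1,k)}_{(1,2,3,4)}$-module structure on these groups is the one coming from multiplication by the boundary variables; since every matrix factorization in sight is $R^{(1,k,1,k)}_{(1,2,3,4)}$-linear, this action can be read off either the source or the target and the two agree in $\HMF$. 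By construction $\widehat{\Gamma}_3\simeq\widehat{S}\boxtimes\widehat{K}_1$ and $\widehat{L}_{(1;3)}^{[1]}\boxtimes_{\C}\widehat{L}_{(2;4)}^{[k]}\simeq\widehat{S}\boxtimes\widehat{K}_2$, where $\widehat{S}=\widehat{S}_{(1,2;3,4)}$ and
$$\widehat{K}_1=K\bigl(p_{k+1};(t_{1,1}-t_{1,3})X_{k,(2,3)}^{(k,-1)}\bigr)\{-k\},\qquad \widehat{K}_2=K\bigl(p_{k+1}X_{k,(2,3)}^{(k,-1)};t_{1,1}-t_{1,3}\bigr),$$
and, again by construction, $\widehat{U}_{(k,1)}=\id_{\widehat{S}}\boxtimes(1,X_{k,(2,3)}^{(k,-1)})$ and $\widehat{Z}_{(k,1)}=\id_{\widehat{S}}\boxtimes(X_{k,(2,3)}^{(k,-1)},1)$.

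Next I would identify the two $\EXT^0$-modules together with their distinguished generators. Using Lemma~\ref{lem:ExtvH} and the compatibility of $\boxtimes$ with dualization and with itself (associativity and graded commutativity up to homotopy), there is a Künneth-type isomorphism $\EXT(\widehat{S}\boxtimes\widehat{K}_i,\widehat{S}\boxtimes\widehat{K}_j)\cong H(\widehat{S}_{\bullet}\boxtimes\widehat{S})\otimes H(\widehat{K}_{i\,\bullet}\boxtimes\widehat{K}_j)$; the factor $H(\widehat{S}_{\bullet}\boxtimes\widehat{S})$ is the homology of a Koszul matrix factorization of a regular sequence, hence a free module over the relevant quotient of $R^{(1,k,1,k)}_{(1,2,3,4)}$ with a canonical degree-zero generator represented by $\id_{\widehat{S}}$ --- this is the digon-type computation underlying Propositions~\ref{str-mf-ind1} and~\ref{str-mf-ind2}. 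It then remains to analyse $\HOM_{\HMF(R)}(\widehat{K}_1,\widehat{K}_2)$ and $\HOM_{\HMF(R)}(\widehat{K}_2,\widehat{K}_1)$ over $R=R^{(1,k,1,k)}_{(1,2,3,4)}$. Writing $\widehat{K}_1=(R\xrightarrow{p}R\xrightarrow{qX}R)$ and $\widehat{K}_2=(R\xrightarrow{pX}R\xrightarrow{q}R)$ with $p=p_{k+1}$, $q=t_{1,1}-t_{1,3}$, $X=X_{k,(2,3)}^{(k,-1)}$ (suppressing the harmless shift) and using that $q$ is a nonzerodivisor, a short calculation shows that every chain map $\widehat{K}_1\to\widehat{K}_2$ has the form $(f_0,f_0X)$ and every chain map $\widehat{K}_2\to\widehat{K}_1$ the form $(Xg_1,g_1)$, with $f_0,g_1\in R$, the null-homotopic ones being exactly those with $f_0$ (resp. $g_1$) in the homotopy ideal $(p,q)$ --- here Proposition~\ref{hom-eq} is what discards those. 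Thus, after tensoring with the generator $\id_{\widehat{S}}$, the group $\EXT^0(\widehat{\Gamma}_3,\widehat{L}_{(1;3)}^{[1]}\boxtimes_{\C}\widehat{L}_{(2;4)}^{[k]})$ is identified, via the action on the common underlying module (the $M_0$-component), with a cyclic $R^{(1,k,1,k)}_{(1,2,3,4)}$-module in which $\widehat{U}_{(k,1)}$ is multiplication by $1$, and $\EXT^0(\widehat{L}_{(1;3)}^{[1]}\boxtimes_{\C}\widehat{L}_{(2;4)}^{[k]},\widehat{\Gamma}_3)$ is the submodule in which $\widehat{Z}_{(k,1)}$ is multiplication by $X_{k,(2,3)}^{(k,-1)}$. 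This gives statements (1) and (2); as a consistency check one verifies directly that $\widehat{Z}_{(k,1)}\circ\widehat{U}_{(k,1)}$ and $\widehat{U}_{(k,1)}\circ\widehat{Z}_{(k,1)}$ are multiplication by $X_{k,(2,3)}^{(k,-1)}$.

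The hard part will be the middle step: setting up the Künneth isomorphism with the correct grading shifts and sign conventions, and --- more importantly --- matching the resulting $\EXT^0$-groups with their intrinsic descriptions as modules over the appropriate partially symmetric quotient rings in a way that unambiguously singles out the generator, so that ``sending $1$ to $1$'' and ``multiplying with $X_{k,(2,3)}^{(k,-1)}$'' make sense. In particular one must verify, via a computation of $\EXT(\widehat{S}_{(1,2;3,4)},\widehat{S}_{(1,2;3,4)})$ of the type in Propositions~\ref{str-mf-ind1}--\ref{str-mf-ind2}, that $\id_{\widehat{S}}$ is genuinely a free generator and not a zero-divisor, so that the rank-one computation with $\widehat{K}_1,\widehat{K}_2$ is not corrupted by the $\widehat{S}$-factor. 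Everything else --- the explicit parametrisation of chain maps between $\widehat{K}_1$ and $\widehat{K}_2$ and of null-homotopies --- is routine.
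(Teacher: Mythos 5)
Your proposal is correct and follows essentially the same route as the paper, which states this proposition directly from the definitions $\widehat{U}_{(k,1)}=\id_{\widehat{S}}\boxtimes(1,X_{k,(2,3)}^{(k,-1)})$ and $\widehat{Z}_{(k,1)}=\id_{\widehat{S}}\boxtimes(X_{k,(2,3)}^{(k,-1)},1)$ together with Proposition~\ref{ext-hom}, leaving the identification of the $\EXT^0$-groups implicit. Your explicit parametrisation of chain maps between the two rank-one Koszul factors (using that $t_{1,1}-t_{1,3}$ is a nonzerodivisor) and the composition check $\widehat{Z}\circ\widehat{U}=\widehat{U}\circ\widehat{Z}=X_{k,(2,3)}^{(k,-1)}$ simply make precise what the paper takes as read.
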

\vskip0.2cm
Similarly, we get degree $k$ morphisms 
\begin{gather*}
\widehat{U}_{(1,k)}\colon \widehat{\Gamma}_4
\xrightarrow{\id_{\widehat{S}}\boxtimes ( 1,X_{k(1,4)}^{(k,-1)})}
\widehat{L}_{(1;3)}^{[k]}\btime{\C} \widehat{L}_{(2;4)}^{[1]}\\[1em]
\widehat{Z}_{(k,1)}\colon
\widehat{L}_{(1;3)}^{[k]}\btime{\C} \widehat{L}_{(2;4)}^{[1]}
\xrightarrow{\id_{\widehat{S}}\boxtimes ( X_{k(1,4)}^{(k,-1)},1)}
\widehat{\Gamma}_4.
\end{gather*}

%
%
%
%
\subsection{Morphism in $\HMF$ (3)}\label{sec-mor3}
We consider the following diagrams:
\begin{center}
$
\begin{array}{cc}
\input{figure/twist-zip1}&\hspace{3cm}\input{figure/twist-zip2},\\
\Gamma_5\hspace{3cm}&\hspace{3cm}\Gamma_6
\end{array}
$
\end{center}
We have 
\begin{gather}
\label{eq:Gamma5}
\widehat{\Gamma}_5=\widehat{\Lambda}_{(5;3,4)}^{[k,1]}\btime{R_{(5)}^{(k+1)}} \widehat{V}_{(1,2;5)}^{[1,k]}\simeq\\
\nonumber
\widehat{T}_{(1,2;3,4)}\btime{R_{(1,2,3,4)}^{(1,k,k,1)}} K(q_{k+1};(t_{1,1}-t_{1,4})X_{k,(1,3)}^{(-1,k)}))_{R_{(1,2,3,4)}^{(1,k,k,1)}}\{-k\}\\[1em]
\label{eq:Gamma6}
\widehat{\Gamma}_6=\widehat{\Lambda}_{(2;5,4)}^{[k-1,1]}\btime{R_{(5)}^{(k-1)}} \widehat{V}_{(1,5;3)}^{[1,k-1]}\simeq\\
\nonumber
\widehat{T}_{(1,2;3,4)}\btime{R_{(1,2,3,4)}^{(1,k,k,1)}} K(q_{k+1}(t_{1,1}-t_{1,4});X_{k,(1,3)}^{(-1,k)})_{R_{(1,2,3,4)}^{(1,k,k,1)}}\{-k+1\},
\end{gather}
where
\begin{eqnarray}
\nonumber
\widehat{T}_{(1,2;3,4)}&=&\mathop{\boxtimes}_{a=1}^{k}K(q_a;(t_{1,1}-t_{1,4})X_{a-1,(1,3)}^{(-1,k)}+x_{a,2}-x_{a,3})_{R_{(1,2,3,4)}^{(1,k,k,1)}}\\[1em]
\nonumber
q_a&=&\frac{P_{N+1}(...,X_{a-1,(3,4)}^{(k,1)},X_{a,(1,2)}^{(1,k )},...)-P_{N+1}(...,X_{a,(3,4)}^{(k,1)},X_{a+1,(1,2)}^{(1,k)},...)}{X_{a,(1,2)}^{(1,k)}-X_{a,(3,4)}^{(k,1)}}\\
\nonumber
&&+t_{1,1}\frac{P_{N+1}(...,X_{a,(3,4)}^{(k,1)},X_{a+1,(1,2)}^{(1,k )},...)-P_{N+1}(...,X_{a+1,(3,4)}^{(k,1)},X_{a+2,(1,2)}^{(1,k)},...)}{X_{a+1,(1,2)}^{(1,k)}-X_{a+1,(3,4)}^{(k,1)}}\\
\nonumber
&&(a=1,...,k)\\[1em]
\nonumber
q_{k+1}&=&\frac{P_{N+1}(...,X_{k,(3,4)}^{(k,1 )},X_{k+1,(1,2)}^{(1,k )})-P_{N+1}(...,X_{k,(3,4)}^{(k,1)},X_{k+1,(3,4)}^{(k,1)})}{X_{k+1,(1,2)}^{(1,k)}-X_{k+1,(3,4)}^{(k,1)}}.
\end{eqnarray}
\vskip0.2cm
By the isomorphisms in~\eqref{eq:Gamma5} and~\eqref{eq:Gamma6}, we 
see that the degree $1$ morphisms 
\begin{eqnarray}
\nonumber
(1,t_{1,1}-t_{1,4})&:&K(q_{k+1};(t_{1,1}-t_{1,4})X_{k,(1,3)}^{(-1,k)}))_{R_{(1,2,3,4)}^{(1,k,k,1)}}\\
\nonumber
&&\to K(q_{k+1}(t_{1,1}-t_{1,4});X_{k,(1,3)}^{(-1,k)}))_{R_{(1,2,3,4)}^{(1,k,k,1)}}\{1\}\\
\nonumber
(t_{1,1}-t_{1,4},1)&:&K(q_{k+1}(t_{1,1}-t_{1,4});X_{k,(1,3)}^{(-1,k)}))_{R_{(1,2,3,4)}^{(1,k,k,1)}}\{1\}\\
\nonumber
&&\to K(q_{k+1};(t_{1,1}-t_{1,4})X_{k,(1,3)}^{(-1,k)}))_{R_{(1,2,3,4)}^{(1,k,k,1)}}
\end{eqnarray}
induce morphisms 
\begin{gather*}
\widehat{TU}_{(k,1)}\colon \widehat{\Gamma}_5\to \widehat{\Gamma}_6\\
\widehat{TZ}_{(k,1)}\colon \widehat{\Gamma}_6\to\widehat{\Gamma}_5.
\end{gather*}
\begin{proposition}
\quad\\[-1.5em]
\begin{itemize}
\item[(1)] $\widehat{TU}_{(k,1)}$ corresponds to the 
$R^{(1,k,k,1)}_{(1,2,3,4)}$-linear map in 
$\EXT^0(\widehat{\Gamma}_5,\widehat{\Gamma}_6)$ determined by sending $1$ 
to $1$.
\item[(2)] $\widehat{TZ}_{(k,1)}$ corresponds to the 
$R^{(1,k,k,1)}_{(1,2,3,4)}$-linear map in 
$\EXT^0(\widehat{\Gamma}_6,\widehat{\Gamma}_5)$ 
determined by multiplying with $t_{1,1}-t_{1,4}$. 
\end{itemize}
\end{proposition}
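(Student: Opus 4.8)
The plan is to deduce this from Proposition~\ref{ext-hom} exactly as in the analogous statement for $\widehat{\Gamma}_3$ above, the only real work being the identification of the two $\EXT^0$-groups as spaces of $R^{(1,k,k,1)}_{(1,2,3,4)}$-linear maps. Write $R=R^{(1,k,k,1)}_{(1,2,3,4)}$. By Proposition~\ref{ext-hom} it suffices to compute the classes of the two morphisms in $\HOM_{\HMF}(\widehat{\Gamma}_5,\widehat{\Gamma}_6)$ and $\HOM_{\HMF}(\widehat{\Gamma}_6,\widehat{\Gamma}_5)$. By construction,
$$
\widehat{TU}_{(k,1)}=\id_{\widehat{T}_{(1,2;3,4)}}\boxtimes\big(1,\,t_{1,1}-t_{1,4}\big),\qquad
\widehat{TZ}_{(k,1)}=\id_{\widehat{T}_{(1,2;3,4)}}\boxtimes\big(t_{1,1}-t_{1,4},\,1\big),
$$
where, under the isomorphisms~\eqref{eq:Gamma5} and~\eqref{eq:Gamma6}, the factor $\widehat{T}_{(1,2;3,4)}$ is the same for $\widehat{\Gamma}_5$ and $\widehat{\Gamma}_6$ and the remaining tensorand is the one-step Koszul factorization $K\big(q_{k+1};(t_{1,1}-t_{1,4})X_{k,(1,3)}^{(-1,k)}\big)_{R}\{-k\}$ in the first case and $K\big(q_{k+1}(t_{1,1}-t_{1,4});X_{k,(1,3)}^{(-1,k)}\big)_{R}\{-k+1\}$ in the second. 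Hence the whole computation localises to these two rank-one Koszul factorizations and to the morphisms $(1,t_{1,1}-t_{1,4})$ and $(t_{1,1}-t_{1,4},1)$ between them.

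Next I would compute the two $\HOM$-spaces. The tool is Lemma~\ref{lem:ExtvH}: since all matrix factorizations in sight are finite, $\EXT(\widehat{\Gamma}_5,\widehat{\Gamma}_6)\cong H\big((\widehat{\Gamma}_5)_{\bullet}\btime{R}\widehat{\Gamma}_6\big)$, and, using that $(-)_{\bullet}$ is monoidal up to translation and that $\btime{R}$ is symmetric monoidal, $(\widehat{\Gamma}_5)_{\bullet}\btime{R}\widehat{\Gamma}_6$ splits, up to a shift and translation, as $(\widehat{T}_{(1,2;3,4)})_{\bullet}\btime{R}\widehat{T}_{(1,2;3,4)}$ tensored over $R$ with $K\big(q_{k+1};(t_{1,1}-t_{1,4})X_{k,(1,3)}^{(-1,k)}\big)_{\bullet}\btime{R}K\big(q_{k+1}(t_{1,1}-t_{1,4});X_{k,(1,3)}^{(-1,k)}\big)$. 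The first factor has homology $\END_{\HMF}(\widehat{T}_{(1,2;3,4)})$, a finite-dimensional $\C$-algebra carrying an $R$-action (a partial flag variety cohomology ring, by the Grassmannian arguments used in the proofs of Propositions~\ref{str-mf-ind1} and~\ref{str-mf-ind2}, with Proposition~\ref{hom-eq} discarding the null-homotopic multiplication terms). The second factor, a rank-two Koszul complex, has $H^{0}$ the cyclic $R$-module $R/(q_{k+1},(t_{1,1}-t_{1,4})X_{k,(1,3)}^{(-1,k)})$, generated by the class of $1$ (and symmetrically in the reverse case). Together this realises $\EXT^0(\widehat{\Gamma}_5,\widehat{\Gamma}_6)$ and $\EXT^0(\widehat{\Gamma}_6,\widehat{\Gamma}_5)$ as spaces of $R$-linear maps between the resulting cyclic modules, the identification being the one induced on the cokernels of the differentials, which on the distinguished generator reduces to the $M_0$-component of the Koszul-factor morphism.

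With this in hand the two claims are immediate. The $M_0$-component of $(1,t_{1,1}-t_{1,4})$ is the identity, so after tensoring with $\id_{\widehat{T}_{(1,2;3,4)}}$ and passing to homology, $\widehat{TU}_{(k,1)}$ induces the $R$-linear map determined by $1\mapsto 1$; this is part~(1). The $M_0$-component of $(t_{1,1}-t_{1,4},1)$ is multiplication by $t_{1,1}-t_{1,4}$, so $\widehat{TZ}_{(k,1)}$ induces the $R$-linear map determined by multiplication by $t_{1,1}-t_{1,4}$; this is part~(2). As a consistency check, $(t_{1,1}-t_{1,4},1)\circ(1,t_{1,1}-t_{1,4})=(t_{1,1}-t_{1,4},t_{1,1}-t_{1,4})$, so $\widehat{TZ}_{(k,1)}\circ\widehat{TU}_{(k,1)}$ is multiplication by $t_{1,1}-t_{1,4}$ on $\widehat{\Gamma}_5$, and symmetrically on $\widehat{\Gamma}_6$.

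The main obstacle is the middle step: pinning the two $\HOM$-spaces down precisely, including the grading shifts $\{-k\}$, $\{-k+1\}$ from~\eqref{eq:Gamma5}--\eqref{eq:Gamma6} and the translation appearing when one dualises $\widehat{\Gamma}_5$, and checking that no homotopy correction or unit scalar is introduced when the chain-level morphisms $\id_{\widehat{T}_{(1,2;3,4)}}\boxtimes(1,t_{1,1}-t_{1,4})$ and $\id_{\widehat{T}_{(1,2;3,4)}}\boxtimes(t_{1,1}-t_{1,4},1)$ are pushed to homology. This is the usual Koszul-matrix-factorization bookkeeping, here made slightly delicate because $\widehat{\Gamma}_5$ and $\widehat{\Gamma}_6$ share the same underlying module up to shift while their differentials differ, in the last slot only, by multiplication by $t_{1,1}-t_{1,4}$, so one must ensure the homotopies used to simplify $(\widehat{\Gamma}_5)_{\bullet}\btime{R}\widehat{\Gamma}_6$ are compatible with both morphisms at once.
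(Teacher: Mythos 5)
Your proposal is correct and follows the same route the paper implicitly takes: the paper offers no written proof here, treating the claim as immediate from the fact that $\widehat{TU}_{(k,1)}$ and $\widehat{TZ}_{(k,1)}$ are defined as $\id_{\widehat{T}_{(1,2;3,4)}}$ tensored with the rank-one Koszul morphisms $(1,t_{1,1}-t_{1,4})$ and $(t_{1,1}-t_{1,4},1)$, whose $M_0$-components are $1$ and multiplication by $t_{1,1}-t_{1,4}$ respectively, exactly as you compute. Your extra discussion of the $\EXT^0$-spaces via Lemma~\ref{lem:ExtvH} is more detail than the paper records but is consistent with how it handles the parallel statement for $\widehat{\Gamma}_3$ in the preceding subsection.
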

%
%
%
%
\section{The $2$-category $\HMF_{m,d,N}$}\label{sec:HMF}  

\subsection{The matrix factorizations $\oE_{\pm i}$}\label{sec:EFMF}

\noindent
{\bf $\bullet$ In the case of $\vec{k}=(k_1,k_2)$}:\\
\indent
In this case, we have $\alpha=(1,-1)$. 
For $1\leq a\leq k_2$, we define the matrix factorization 
 
\begin{equation}
\oE^{(a)}_{+,[\vec{k}]}:=\widehat{V}_{(2',j;2)}^{[k_2-a,a]}\btime{R_{(j)}^{(a)}}
\widehat{\Lambda}_{(1';j,1)}^{[a,k_1]}\in \HOM_{\HMF_{m,N}}(\vec{k},\vec{k}+a\alpha)
\end{equation}
associated to the diagram in Figure~\ref{e+}.
\begin{figure}[htb]
$$
\input{figure/f-lambda-m-mf}.
$$
\caption{$E^{(a)}_{+,[\vec{k}]}$}\label{e+}
\end{figure}
\\
Similarly, for $1\leq a\leq k_1$, we define the matrix factorization 
\begin{equation}
\oE^{(a)}_{-,[\vec{k}]}:=\widehat{\Lambda}_{(2';2,j)}^{[k_2,a]}\btime{R_{(j)}^{(a)}}
\widehat{V}_{(j,1';1)}^{[a,k_1-a]}\in \HOM_{\HMF_{m,N}}(\vec{k},\vec{k}-a\alpha)
\end{equation}
associated to the diagram in Figure \ref{e-}.
\begin{figure}[htb]
$$
\input{figure/e-lambda-m-mf}.
$$
\caption{$E^{(a)}_{-,[\vec{k}]}$}\label{e-}
\end{figure}
\\
The proof of the following proposition is analogous to that of 
Proposition~\ref{str-mf-ind1}(2).  
\begin{proposition}\label{str-mf-ind3}
The matrix factorizations $\oE^{(a)}_{+,[\vec{k}]}$ and $\oE^{(a)}_{-,[\vec{k}]}$ 
are indecomposable.
\end{proposition}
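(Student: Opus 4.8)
The plan is to follow the proof of Proposition~\ref{str-mf-ind1}(2): show that the endomorphism algebra of $\oE^{(a)}_{\pm,[\vec{k}]}$ in $\HMF_{m,N}$ is one-dimensional in degree zero, so that the identity is a primitive idempotent and, $\HMF_{m,N}$ being Krull--Schmidt, $\oE^{(a)}_{\pm,[\vec{k}]}$ is indecomposable.

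First I would note that $\oE^{(a)}_{\pm,[\vec{k}]}$ is finite, being a box tensor product of the finite Koszul matrix factorizations $\widehat{V}$ and $\widehat{\Lambda}$. Hence Lemma~\ref{lem:ExtvH} applies, and together with Proposition~\ref{ext-hom} it gives
$$\END_{\HMF}\big(\oE^{(a)}_{\pm,[\vec{k}]}\big)\cong\EXT^0\big(\oE^{(a)}_{\pm,[\vec{k}]},\oE^{(a)}_{\pm,[\vec{k}]}\big)\cong H^0\big((\oE^{(a)}_{\pm,[\vec{k}]})_{\bullet}\boxtimes\oE^{(a)}_{\pm,[\vec{k}]}\big)$$
as $q$-graded vector spaces. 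Using the duality identity $\hat u_{\bullet}\cong\widehat{u^{*}}\{d(\vec{k})\}\langle 1\rangle$ recalled before Theorem~\ref{thm:catwebrels} (in its version for general monomial webs), the right-hand side becomes, up to an overall grading and homological shift, the homology of the matrix factorization of the closed web $w$ obtained by closing up the composite of the ladder $E^{(a)}_{\pm,[\vec{k}]}$ with its mirror image; concretely $w$ is a closed theta-type $\mathfrak{sl}_N$-web built from two $a$-labelled rungs joining two circles of colors $k_1$ and $k_2$.

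Next, exactly as in Proposition~\ref{str-mf-ind1}(2), I would identify $H(\widehat{w})$ with a Jacobian ring, namely the cohomology ring of the variety $X$ of flags in $\C^N$ compatible with $w$ --- an iterated Grassmannian bundle over the configuration of subspaces attached to the boundary edges, hence connected. Connectedness gives $H^0(X,\C)=\C$, i.e. $\EXT^0(\oE^{(a)}_{\pm,[\vec{k}]},\oE^{(a)}_{\pm,[\vec{k}]})$ is one-dimensional in $q$-degree zero, which is exactly what we need. An alternative to the geometric identification is to evaluate $\widehat{w}$ by repeated use of Theorem~\ref{thm:catwebrels}, reducing it via the digon relation~\eqref{eq:oppositedigon} (and, if needed, the square relation~\eqref{eq:oppositesquare}) to a direct sum $\bigoplus_i\widehat{w_0}\{c_i\}$ of shifts of the rung-free matrix factorization $\widehat{w_0}\simeq\widehat{L}^{[k_1]}\boxtimes\widehat{L}^{[k_2]}$, and then invoking the Grassmannian computation of Proposition~\ref{str-mf-ind1}(2) for each summand.

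The main obstacle is the bookkeeping: either pinning down the variety $X$ and its connectedness precisely from the two trivalent vertices and the internal $a$-coloured edge of $E^{(a)}_{\pm,[\vec{k}]}$, or --- in the relations approach --- keeping track of all the grading shifts (the $\{-k_1k_2\}$ in the definition of $\widehat{\Lambda}$, the homological twist $\langle 1\rangle$, and the shifts $c_i$ produced by~\eqref{eq:oppositedigon} and~\eqref{eq:oppositesquare}) so as to be sure that precisely one copy of $\widehat{w_0}$ sits in homological degree $0$ and internal degree $0$, forcing the $q^{0}$-coefficient of $\dim_q\EXT^0$ to equal $1$. Once this is settled, what remains is the routine ``Jacobian ring equals Grassmannian cohomology'' computation already carried out for Proposition~\ref{str-mf-ind1}.
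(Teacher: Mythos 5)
Your proposal is correct and follows essentially the same route as the paper, which simply declares the proof ``analogous to Proposition~\ref{str-mf-ind1}(2)'': identify $\EXT^0$ of the rung matrix factorization with itself with the degree-zero part of the cohomology of a connected partial flag variety, conclude it is one-dimensional, and deduce that the identity is a primitive idempotent in the Krull--Schmidt category $\HMF$. The extra bookkeeping you flag (closing up the web, the normalization shifts) is exactly the content the paper leaves implicit, so nothing is missing.
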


\noindent
{\bf $\bullet$ The general case $\vec{k}=(k_1,...,k_{m})$}:\\
In this case we have $\alpha_i=\epsilon_i-\epsilon_{i+1}$, for 
$i=1,\ldots,m-1$. We define 
$$
\widehat{\mathbb{1}}_{[\vec{k}]}:=
\widehat{L}^{[k_m]}_{(m';m)}\btime{\,\C}\widehat{L}^{[k_{m-1}]}_{(m-1)';(m-1)}
\btime{\,\C}...\btime{\,\C}\widehat{L}^{[k_2]}_{(2';2)}\btime{\,\C}
\widehat{L}^{[k_1]}_{(1';1)}\in \HOM_{\HMF_{m,N}}(\vec{k},\vec{k})
$$
associated to the diagram in Figure \ref{id}.
\begin{figure}[htb]
$$
\input{figure/id-lambda-mf}
$$
\caption{$\mathbb{1}_{[k]}$}\label{id}
\end{figure}
\\
\indent
For $i=1,...,m-1$ and $1\leq a\leq k_{i+1}$, 
we define the matrix factorization 
\begin{equation}
\oE^{(a)}_{+i,[\vec{k}]}:= \widehat{\mathbb{1}}_{[(k_{i+2},...,k_{m})]}   
\btime{\C}\oE^{(a)}_{+,[(k_i,k_{i+1})]}\btime{\C}\widehat{\mathbb{1}}_{[(k_1,...,k_{i-1})]}
\in \HOM_{\HMF_{m,N}}(\vec{k},\vec{k}+a\alpha_i)
\end{equation}
associated to the diagram in Figure \ref{e+i}.
\begin{figure}[htb]
$$
\input{figure/f-lambda-i-m-mf}
$$
\caption{$E^{(a)}_{+i,[\vec{k}]}$}\label{e+i}
\end{figure}
\\
and, for $1\leq a\leq k_{i}$, the matrix factorization   
\begin{equation}
\oE^{(a)}_{-i,[\vec{k}]}:=\widehat{\mathbb{1}}_{[(k_{i+2},...,k_{m})]}\btime{\C}
\oE^{(a)}_{-,[(k_i,k_{i+1})]}\btime{\C}\widehat{\mathbb{1}}_{[(k_1,...,k_{i-1})]}
\in \HOM_{\HMF_{m,N}}(\vec{k},\vec{k}-a\alpha_i)
\end{equation}
associated to the diagram in Figure \ref{e-i}.
\begin{figure}[htb]
$$
\input{figure/e-lambda-i-m-mf}
$$
\caption{$E^{(a)}_{-i,[\vec{k}]}$}\label{e-i}
\end{figure}
\vskip0.2cm

The proof of the following proposition follows from 
Propositions~\ref{str-mf-ind1}(2) and~\ref{str-mf-ind3}.
\begin{proposition}
$\oE^{(a)}_{+i,[\vec{k}]}$ and $\oE^{(a)}_{-i,[\vec{k}]}$ are indecomposable.
\end{proposition}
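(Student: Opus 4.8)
The plan is to deduce indecomposability of $\widehat{E}^{(a)}_{\pm i,[\vec{k}]}$ from that of its box-tensor factors, using the same criterion employed in the proof of Proposition~\ref{str-mf-ind1}(2): in the Krull--Schmidt category $\HMF$ (whose morphisms are all of degree zero), an object is indecomposable as soon as its endomorphism space is one-dimensional, i.e.\ equal to $\C\cdot\id$, so that $\id$ is a primitive idempotent.

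First I would unwind the definition. By construction $\widehat{E}^{(a)}_{+i,[\vec{k}]}$ is the box-tensor product over $\C$ of $\widehat{E}^{(a)}_{+,[(k_i,k_{i+1})]}$ with the matrix factorizations $\widehat{L}^{[k_j]}_{(j';j)}$ for $j\ne i,i+1$, and all the alphabets attached to these factors are pairwise disjoint. By Proposition~\ref{str-mf-ind1}(2) each $\widehat{L}^{[k_j]}_{(j';j)}$ with $1\le k_j\le N$ is indecomposable (when $k_j=0$ the factor is the trivial unit factorization and may be dropped), and by Proposition~\ref{str-mf-ind3} so is $\widehat{E}^{(a)}_{+,[(k_i,k_{i+1})]}$. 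Moreover, inspecting the proofs of those two propositions, in each case indecomposability is witnessed by the stronger fact that the degree-zero part of the $\EXT$-algebra of the factor with itself is one-dimensional (computed there as the degree-zero part of the cohomology ring of a Grassmannian, respectively of a partial flag variety).

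Next I would establish multiplicativity of $\EXT$ under $\boxtimes_{\C}$. All the factors above are finite matrix factorizations over pairwise disjoint alphabets, so Lemma~\ref{lem:ExtvH} identifies the $\EXT$-algebra of the box-tensor product with the homology of the box-tensor product of the (dualized) factors; since dualization commutes with $\boxtimes$ and the tensor product is over $\C$, the Künneth theorem yields a $q$-degree preserving isomorphism $\EXT(\widehat{M}_1\boxtimes\cdots\boxtimes\widehat{M}_r,\,\widehat{M}_1\boxtimes\cdots\boxtimes\widehat{M}_r)\cong\EXT(\widehat{M}_1,\widehat{M}_1)\otimes_{\C}\cdots\otimes_{\C}\EXT(\widehat{M}_r,\widehat{M}_r)$. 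Taking the degree-zero part of $\EXT^0$ and using that each factor contributes only $\C\cdot\id$ there, I conclude that $\End_{\HMF}(\widehat{E}^{(a)}_{+i,[\vec{k}]})$ is one-dimensional, spanned by the identity, hence $\widehat{E}^{(a)}_{+i,[\vec{k}]}$ is indecomposable. The case of $\widehat{E}^{(a)}_{-i,[\vec{k}]}$ is identical, using $\widehat{E}^{(a)}_{-,[(k_i,k_{i+1})]}$ and $\widehat{L}^{[k_j]}_{(j';j)}$.

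The main obstacle I anticipate is bookkeeping of the $\Z/2\times\Z$-bigrading in the Künneth step: a priori the internal-degree-zero part of the even piece $\EXT^0$ of the box-tensor product could receive contributions from pairing the odd pieces $\EXT^1$ of two factors. This causes no trouble here because for each factor the odd part $\EXT^1$ vanishes (its $\EXT$-algebra is the cohomology ring of a Grassmannian or partial flag variety, concentrated in even homological degree), but this vanishing must be recorded cleanly; alternatively one can bypass the point entirely by arguing directly that every idempotent endomorphism of the box-tensor product is, up to homotopy, a scalar multiple of the identity.
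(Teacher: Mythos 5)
Your proposal is correct and takes essentially the same route as the paper: the paper's own (one-line) justification likewise reduces the claim to Propositions~\ref{str-mf-ind1}(2) and~\ref{str-mf-ind3} via the defining box-tensor decomposition of $\oE^{(a)}_{\pm i,[\vec{k}]}$ into $\oE^{(a)}_{\pm,[(k_i,k_{i+1})]}$ and the factors $\widehat{L}^{[k_j]}_{(j';j)}$ over disjoint alphabets. You have simply made explicit the K\"unneth/$\EXT$-multiplicativity step and the one-dimensionality of the degree-zero endomorphism space that the paper leaves implicit.
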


Next we have to explain in which order we glue these webs and tensor 
the corresponding matrix factorizations. The logic is determined by 
the fact that these webs determine an action of $\U{m}$ on 
$W_{\Lambda}$ by Proposition~\ref{prop:CKM}), 
which the corresponding matrix factorizations categorify, 
as we will show in Theorem~\ref{thm:2-funct} and 
Section~\ref{sec:cathowe}. 
 
For any signed sequence 
$\underline{i}=(\epsilon_1i_1,\epsilon_2i_2,...,\epsilon_li_l)$ and 
any sequence of non-negative integers $\underline{a}=(a_1,...,a_l)$, let 
$$
(\vec{k},\underline{a},\underline{i})_{j}:=\vec{k}+\sum_{s=j+1}^l \epsilon_s a_s \alpha_s
$$
for $j=1,\ldots,l-1$. By convention, we put 
$$
(\vec{k},\underline{a},\underline{i})_{l}:=\vec{k}.
$$
we define 
\begin{equation}
\oE^{(\underline{a})}_{\ui,[\vec{k}]}:=\mathop{\boxtimes}_{j=1}^{l}
\oE^{(a_j)}_{\epsilon_j i_j,[(\vec{k},\underline{a},\underline{i})_{j}]},
\end{equation}

\begin{example}
Let $n=5$, $\ui=(+3,+1,-2,+4)$ and $\underline{a}=(7,6,5,4)$. Then 
$\oE^{(\underline{a})}_{\ui,[\vec{k}]}$ is the matrix factorization 
associated to the web 
\begin{equation}
\nonumber
\input{figure/fig-ex-gen4}
\end{equation}
\end{example}

\vskip0.5cm

As before, let $N\geq 2$ and $m,d\geq 0$ be arbitrary integers. 
\begin{definition}
We define the $2$-category $\HMF_{m,d,N}$ as follows:
\begin{itemize}
\item The set of objects is $\Lambda(m,d)_N$.
\item For any pair of objects $\vec{k},\vec{k}'\in \Lambda(m,d)_N$, 
we define the hom-category 
$\Hom_{\HMF_{m,d,N}}(\vec{k},\vec{k}')$ to be the full subcategory of
$\HMF_{R^{\vec{k},\vec{k}'}}(P_{N+1}(\mathbb{X}^{\vec{k}})-P_{N+1}(\mathbb{X}^{\vec{k}'}))$
whose set of objects is 
$$
\left\{
\oE^{(\underline{a})}_{\ui,[\vec{k}]} \mid (\vec{k},\underline{a},\underline{i})
=\vec{k}'\right\}.
$$
Horizontal composition is defined by tensoring and vertical composition 
by composing morphisms between matrix factorizations. 
\end{itemize}
\end{definition}

In the rest of this section we define certain $1$ and $2$-morphisms in 
$\HMF_{m,N}^*$ which are crucial for Section~\ref{sec:cathowe}. We always 
assume that $\vec{k}\in \Lambda(m,d)_N$.

\subsection{Some useful $2$-morphisms}
In this section we define some useful 
$2$-morphisms in $\HMF_{m,N}$.
\begin{definition}
We define the endomorphisms of $\oE_{+i,[k]}$ and $\oE_{-i,[k]}$ of degree $2b$, 
denoted $\widehat{t_{1,j}}^b$, by
$$
\widehat{t_{1,j}}^b:=(t_{1,j}^b,t_{1,j}^b)\in \End_{\HMF_{m,N}}(\oE_{\pm i,[k]})
$$
\end{definition}
For the proof of the following proposition see~\cite{wu,yo1,yo2}. 
\begin{proposition}
If $b\geq N$, then $\widehat{t_{1,j}}^b=0$.
\end{proposition}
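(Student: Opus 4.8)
The plan is to reduce, by the usual ideal argument, to the exponent $b=N$, and then to isolate a copy of the identity matrix factorization of a colour-$1$ edge inside $\oE_{\pm i,[\vec k]}$ on which multiplication by $t_{1,j}^{\,N}$ is manifestly null-homotopic.

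First recall that the null-homotopic morphisms form a two-sided ideal in the morphism spaces of $\HMF_{m,N}^*$. Since $\widehat{t_{1,j}}^{\,b}=\widehat{t_{1,j}}^{\,b-N}\circ\widehat{t_{1,j}}^{\,N}$ for $b\geq N$, it is enough to prove $\widehat{t_{1,j}}^{\,N}=0$ in $\End_{\HMF_{m,N}}(\oE_{\pm i,[\vec k]})$. By the definition of $\oE_{\pm i,[\vec k]}$ as a horizontal composite $\widehat{\mathbb{1}}_{[\cdots]}\btime{\C}\oE_{\pm,[(k_i,k_{i+1})]}\btime{\C}\widehat{\mathbb{1}}_{[\cdots]}$, with $\oE_{+,[(k_1,k_2)]}=\widehat{V}^{[k_2-1,1]}_{(2',j;2)}\btime{R^{(1)}_{(j)}}\widehat{\Lambda}^{[1,k_1]}_{(1';j,1)}$ (and the mirror statement for $\oE_{-}$), the endomorphism $\widehat{t_{1,j}}^{\,N}$ acts as $\id\boxtimes\bigl((t_{1,j}^{\,N},t_{1,j}^{\,N})\boxtimes\id\bigr)\boxtimes\id$. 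It therefore suffices to show that multiplication by $t_{1,j}^{\,N}$ is null-homotopic already on the factor $\widehat{\Lambda}^{[1,k_1]}_{(1';j,1)}$ (resp. $\widehat{V}^{[k_2-1,1]}_{(2',j;2)}$).

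Now insert an identity colour-$1$ edge on the rung: there is a homotopy equivalence $\widehat{L}^{[1]}_{(j;j')}\btime{R^{(1)}_{(j)}}\widehat{\Lambda}^{[1,k_1]}_{(1';j,1)}\simeq\widehat{\Lambda}^{[1,k_1]}_{(1';j',1)}$, the matrix factorization $\widehat{L}^{[1]}$ being the identity bimodule factorization of a colour-$1$ edge (cf. Propositions~12 and~26 in~\cite{kr}). Under this equivalence $\widehat{t_{1,j}}$ is carried to multiplication by $t_{1,j}$ on the $\widehat{L}^{[1]}_{(j;j')}$-factor, up to homotopy, because $t_{1,j}-t_{1,j'}$ is the right entry of $\widehat{L}^{[1]}_{(j;j')}=K\bigl(P^{[1]}_{1,(j;j')};\,t_{1,j}-t_{1,j'}\bigr)_{R}$ with $P^{[1]}_{1,(j;j')}=\sum_{s=0}^{N}t_{1,j}^{\,s}t_{1,j'}^{\,N-s}$, so by Proposition~\ref{hom-eq} multiplication by it is homotopic to $0$ and hence $\widehat{t_{1,j}}\simeq\widehat{t_{1,j'}}$ on $\widehat{L}^{[1]}_{(j;j')}$. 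Finally, Proposition~\ref{hom-eq} also makes multiplication by $P^{[1]}_{1,(j;j')}$ null-homotopic on $\widehat{L}^{[1]}_{(j;j')}$, hence so is multiplication by any element of the ideal generated by $P^{[1]}_{1,(j;j')}$ and $t_{1,j}-t_{1,j'}$. Reducing $P^{[1]}_{1,(j;j')}$ modulo $t_{1,j}-t_{1,j'}$ gives $(N+1)\,t_{1,j}^{\,N}$, i.e. $(N+1)\,t_{1,j}^{\,N}=P^{[1]}_{1,(j;j')}-(t_{1,j}-t_{1,j'})\,g$ for some $g\in R$; working over $\C$, multiplication by $t_{1,j}^{\,N}$ is therefore null-homotopic on $\widehat{L}^{[1]}_{(j;j')}$, and transporting this back through the equivalences of the previous paragraph yields $\widehat{t_{1,j}}^{\,N}=0$ on $\oE_{\pm i,[\vec k]}$.

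The main obstacle is making the ``insert an identity edge and slide the dot'' step fully rigorous, i.e. producing the homotopy equivalence $\widehat{L}^{[1]}_{(j;j')}\btime{}\widehat{\Lambda}\simeq\widehat{\Lambda}$ and checking that it intertwines the two copies of $\widehat{t_{1,j}}$; this is exactly where the explicit Koszul models of Wu~\cite{wu} and Yonezawa~\cite{yo1,yo2} are needed, and for divided powers $a>1$ the same scheme applies with $t_{1,j}^{\,N}$ replaced by any element of the ideal $\bigl(h_{N-a+1}(\mathbb{T}^a_j),\ldots,h_N(\mathbb{T}^a_j)\bigr)$ containing it, these $h_r(\mathbb{T}^a_j)$ being, modulo the right entries, exactly the left entries (the $\partial P_{N+1}/\partial e_c\propto h_{N+1-c}$) of $\widehat{L}^{[a]}_{(j;j')}$.
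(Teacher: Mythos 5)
The paper offers no proof of this proposition at all --- it simply defers to Wu and Yonezawa --- and your argument is a correct, self-contained reconstruction of exactly the mechanism used in those references: reduce to the $1$-colored edge factorization $K\bigl(P^{[1]}_{1,(j;j')};\,t_{1,j}-t_{1,j'}\bigr)$, note that multiplication by either Koszul entry is null-homotopic (Proposition~\ref{hom-eq}), and observe that $(N+1)\,t_{1,j}^{\,N}\equiv P^{[1]}_{1,(j;j')} \bmod (t_{1,j}-t_{1,j'})$, so that over $\C$ multiplication by $t_{1,j}^{\,N}$ lies in the ideal of null-homotopic multiplications. The reduction to a single tensor factor and the edge-subdivision equivalence (exclusion of the internal variable, under which multiplication by $t_{1,j}$ is carried to multiplication by $t_{1,j'}$) are both standard and correctly invoked, so I see no gap; the only cosmetic point is that it is slightly cleaner to glue the identity edge so that $j$ stays on the boundary and $j'$ becomes internal, which makes the final ``transport back'' step immediate.
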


\begin{figure}[htb]
$$
\xymatrix@R=.5pc{
\txt{\input{figure/e-e-l-l-1}}
\ar@<1mm>[rr]^(.5){\widehat{I}_{(k_{i+1}-1,1)}}
&&\ar@<1mm>[ll]^(.5){\widehat{D}_{(k_{i+1}-1,1)}}
\txt{\input{figure/e-e-l-l-2}}
\ar@<1mm>[r]^(.55){\widehat{\varphi}_{j_1,j_2}}
&\ar@<1mm>[l]^(.45){\widehat{\psi}_{j_1,j_2}}
\txt{\input{figure/e-e-l-l-3}}
\ar@<1mm>[rr]^(.5){(-1)^{k_i}\widehat{Z}_{(1,k_i)}}
&&\ar@<1mm>[ll]^(.5){\widehat{U}_{(1,k_i)}}
\txt{\input{figure/e-e-l-l-4}}\\
\widehat{\mathbb{1}}_{[\vec{k}]}&&\widehat{\Gamma}_2&\widehat{\Gamma}'_2&&
\oE_{(-i,+i),[\vec{k}]}
}
$$
\\[-1em]
\caption{$2$-morphisms I}\label{2-mor1}
\end{figure}
\indent
For the following definition consult Figure~\ref{2-mor1}. The map 
$\hat{\phi}_{j_1,j_2}$ is induced by the homotopy equivalences 
$$\widehat{\Gamma_2}\simeq\widehat{\Gamma}_2'/\langle (t_{1,j_1}-t_{1,j_2})\rangle
\simeq \widehat{\Gamma}_2'$$
and $\hat{\psi}_{j_1,j_2}$ is its inverse. 

\begin{definition}
We define the morphisms 
\begin{eqnarray*}
&&\xymatrix{\widehat{CU}_{+i,[\vec{k}]}:&\widehat{\txt{\input{figure/e-e-l-l-1}}}\hspace{.5cm}\ar[r]&\hspace{.5cm}\widehat{\txt{\input{figure/e-e-l-l-4-map}}}};\\
&&\xymatrix{\widehat{CU}_{-i,[\vec{k}]}:&\widehat{\txt{\input{figure/e-e-l-l-1}}}\hspace{.5cm}\ar[r]&\hspace{.5cm}\widehat{\txt{\input{figure/f-f-l-l-4-map}}}}
\end{eqnarray*}
by 
\begin{eqnarray*}
\widehat{CU}_{+i,[\vec{k}]}&:=&(-1)^{k_i}
\widehat{Z}_{(1,k_i)}\widehat{\varphi}_{j_1,j_2}\widehat{I}_{(k_{i+1}-1,1)};\\[1em] 
\widehat{CU}_{-i,[\vec{k}]}&:=&\widehat{Z}_{(k_{i+1},1)}\widehat{\varphi}_{j_1,j_2}
\widehat{I}_{(1,k_{i}-1)}.
\end{eqnarray*}

Their degree is:
\begin{eqnarray*}
\deg(\widehat{CU}_{+i,[\vec{k}]})&=&k_i-k_{i+1}+1,\\
\deg(\widehat{CU}_{-i,[\vec{k}]})&=&-k_i+k_{i+1}+1.
\end{eqnarray*}
\end{definition}
The following lemma follows directly from the definitions and its proof 
is left to the reader. 
\begin{lemma}
$\widehat{CU}_{+i,[\vec{k}]}$ is the $R^{\vec{k}}$-linear map in 
$\EXT^0(\widehat{\mathbb{1}}_{[\vec{k}]},\oE_{(-i,+i),[\vec{k}]})$
determined by multiplying with 
$$(-1)^{k_i}X_{k_i(i,j_2)}^{(k_i,-1)}.$$

Similarly, $\widehat{CU}_{-i,[\vec{k}]}$ is the $R^{\vec{k}}$-linear map 
in $\EXT^0(\widehat{\mathbb{1}}_{[\vec{k}]},\oE_{(+i,-i),[\vec{k}]})$
determined by multiplying with 
$$ X_{k_{i+1}(i+1,j_1)}^{(k_{i+1},-1)}.$$ 
\end{lemma}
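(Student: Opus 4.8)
The plan is to read off the class of $\widehat{CU}_{+i,[\vec{k}]}$ in $\EXT^{0}(\widehat{\mathbb{1}}_{[\vec{k}]},\oE_{(-i,+i),[\vec{k}]})$ directly from its definition $\widehat{CU}_{+i,[\vec{k}]}=(-1)^{k_i}\,\widehat{Z}_{(1,k_i)}\,\widehat{\varphi}_{j_1,j_2}\,\widehat{I}_{(k_{i+1}-1,1)}$, by determining the effect of each of the three factors on $\EXT^{0}$ and then composing. By Proposition~\ref{ext-hom} the degree-zero part of $\EXT$ is the space of homotopy classes of morphisms of matrix factorizations, and vertical composition of $2$-morphisms is ordinary composition of such classes; so it suffices to track how each factor moves the distinguished generators of the cyclic modules in play. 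Since $\widehat{CU}_{+i,[\vec{k}]}$ modifies only the two neighbouring strands $i$ and $i+1$ and is $\widehat{\mathbb{1}}$ on the rest, all three factors are, after the evident relabelling of the generic labels $1,2,3,4,5$ of Sections~\ref{sec-mor1}--\ref{sec-mor2}, the morphisms constructed there: $\widehat{I}_{(k_{i+1}-1,1)}$ is an instance of $\widehat{I}_{(1,k)}$ from Section~\ref{sec-mor1} with $k=k_{i+1}-1$, $\widehat{\varphi}_{j_1,j_2}$ is the isomorphism attached to Figure~\ref{2-mor1}, and $\widehat{Z}_{(1,k_i)}$ is an instance of the $\widehat{Z}$-morphism of Section~\ref{sec-mor2}.

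Concretely, I would argue as follows. By its construction in Section~\ref{sec-mor1}, $\widehat{I}_{(k_{i+1}-1,1)}$ extends the explicit $R$-module map $I_{(4,3)}$, which sends $1\mapsto 1$; hence on $\EXT^{0}$ it carries the canonical generator of $\EXT^{0}(\widehat{\mathbb{1}}_{[\vec{k}]},\widehat{\mathbb{1}}_{[\vec{k}]})$ to that of $\EXT^{0}(\widehat{\mathbb{1}}_{[\vec{k}]},\widehat{\Gamma}_{2})$. Next, $\widehat{\varphi}_{j_1,j_2}$ is by definition induced by the homotopy equivalences $\widehat{\Gamma}_{2}\simeq\widehat{\Gamma}'_{2}/\langle t_{1,j_1}-t_{1,j_2}\rangle\simeq\widehat{\Gamma}'_{2}$, and a direct check with these explicit matrix factorizations shows that they identify the respective distinguished generators, so $\widehat{\varphi}_{j_1,j_2}$ again acts by $1$. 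Finally, the Proposition of Section~\ref{sec-mor2}, applied with the index dictionary under which the abstract labels $(1,2,3,4)$ become the local labels around the rung (so that $X^{(k,-1)}_{k,(2,3)}$ reads as $X^{(k_i,-1)}_{k_i,(i,j_2)}$), shows that $\widehat{Z}_{(1,k_i)}$ induces on $\EXT^{0}$ multiplication by $X^{(k_i,-1)}_{k_i,(i,j_2)}$. As each $\EXT^{0}$ in this chain is cyclic over the relevant ring of partially symmetric polynomials with the generator just described, the composite sends the generator to $X^{(k_i,-1)}_{k_i,(i,j_2)}$ times the generator, and the prefactor $(-1)^{k_i}$ in the definition turns this into multiplication by $(-1)^{k_i}X^{(k_i,-1)}_{k_i,(i,j_2)}$. (If one prefers to avoid the cyclicity remark, one can instead compute the composite on the underlying free $R$-modules, where all three maps are given by explicit formulas.) As a sanity check the degrees add up: $\widehat{I}_{(k_{i+1}-1,1)}$ has degree $-(k_{i+1}-1)$, $\widehat{\varphi}_{j_1,j_2}$ has degree $0$ and $\widehat{Z}_{(1,k_i)}$ has degree $k_i$, summing to $k_i-k_{i+1}+1$.

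The statement for $\widehat{CU}_{-i,[\vec{k}]}=\widehat{Z}_{(k_{i+1},1)}\,\widehat{\varphi}_{j_1,j_2}\,\widehat{I}_{(1,k_{i}-1)}$ follows by the mirror-symmetric argument: $\widehat{I}_{(1,k_i-1)}$ and $\widehat{\varphi}_{j_1,j_2}$ act by $1$, while $\widehat{Z}_{(k_{i+1},1)}$, by the analogous computation in Section~\ref{sec-mor2}, acts by multiplication by $X^{(k_{i+1},-1)}_{k_{i+1},(i+1,j_1)}$, and there is no global sign, giving the second assertion. I expect the main difficulty to be purely bookkeeping rather than conceptual: one has to fix carefully the dictionary between the local models of Sections~\ref{sec-mor1}--\ref{sec-mor3} and the strand-indexed objects $\widehat{\mathbb{1}}_{[\vec{k}]}$, $\widehat{\Gamma}_{2}$, $\widehat{\Gamma}'_{2}$, $\oE_{(\mp i,\pm i),[\vec{k}]}$; verify that the homotopy equivalences defining $\widehat{\varphi}_{j_1,j_2}$ and $\widehat{\psi}_{j_1,j_2}$ really match the distinguished generators (here Proposition~\ref{hom-eq} is used to see which multiplication operators vanish up to homotopy); and keep the signs consistent, in particular the $(-1)^{k_i}$ in front of $\widehat{Z}_{(1,k_i)}$ in Figure~\ref{2-mor1} versus the $(-1)^{k_i}$ in the target polynomial. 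That is the place where a sign or index slip is most likely.
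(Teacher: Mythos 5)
Your proposal is correct and is precisely the argument the paper intends: the authors state that the lemma ``follows directly from the definitions'' and omit the proof, and your unpacking of the composite $(-1)^{k_i}\widehat{Z}_{(1,k_i)}\widehat{\varphi}_{j_1,j_2}\widehat{I}_{(k_{i+1}-1,1)}$ factor by factor on $\EXT^0$, using the explicit descriptions of $\widehat{I}$, $\widehat{\varphi}$ and $\widehat{Z}$ from Sections~\ref{sec-mor1} and~\ref{sec-mor2}, is exactly that omitted verification. The degree bookkeeping and the relabelling dictionary you flag are indeed the only delicate points, and you have handled them correctly.
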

\indent
The following definition is similar. 
\begin{definition}
We define 
\begin{eqnarray*}
&&\xymatrix{\widehat{CA}_{+i,[\vec{k}]}:&\widehat{\txt{\input{figure/e-e-l-l-4-map}}}\hspace{.5cm}\ar[r]&\hspace{.5cm}\widehat{\txt{\input{figure/e-e-l-l-1}}}};
\\
&&\xymatrix{\widehat{CA}_{-i,[\vec{k}]}:&\widehat{\txt{\input{figure/f-f-l-l-4-map}}}\hspace{.5cm}\ar[r]&\hspace{.5cm}\widehat{\txt{\input{figure/e-e-l-l-1}}}}
\end{eqnarray*}
by 
\begin{eqnarray*}
\widehat{CA}_{+i,[\vec{k}]}&:=&\widehat{D}_{(k_{i+1}-1,1)}\widehat{\psi}_{j_1,j_2}
\widehat{U}_{(1,k_i)};\\[1em] 
\widehat{CA}_{-i,[\vec{k}]}&:=&(-1)^{k_i}\widehat{D}_{(1,k_i-1)}
\widehat{\psi}_{j_1,j_2}\widehat{U}_{(k_{i+1},1)}.
\end{eqnarray*}

Their degree is:
\begin{eqnarray*}
\deg(\widehat{CA}_{+i,[\vec{k}]})&=&k_i-k_{i+1}+1,\\
\deg(\widehat{CA}_{-i,[\vec{k}]})&=&-k_i+k_{i+1}+1.
\end{eqnarray*}
\end{definition}
\begin{lemma} $\widehat{CA}_{+i,[k]}$ corresponds to the $R^{\vec{k}}$-linear 
map in\\ $\EXT^0(\oE_{(-i,+i),[\vec{k}]},\mathbb{1}_{[\vec{k}]})$ given by 
$$f(t_1,\ldots,t_{k_{i+1}-1},t_{1,j_1},t_{1,j_2})
\mapsto \partial_{t_{1}t_{1,j_1}}\partial_{t_{2}t_{1,j_1}}\cdots\partial_{t_{k_{i+1}-1}t_{1,j_1}}
f_{j_1=j_2}.$$
Here $t_1,\ldots,t_{k_{i+1}-1}$ are the edge variables for the left vertical edge 
inside the square and $f_{j_1=j_2}$ is the polynomial obtained by putting 
$t_{1,j_1}$ equal to $t_{1,j_2}$ in $f$.  
\vskip0.2cm
$\widehat{CA}_{-i,[\vec{k}]}$ corresponds to the $R^{\vec{k}}$-linear map 
in $\EXT^0(\oE_{(+i,-i),[\vec{k}]},\mathbb{1}_{[\vec{k}]})$ given by 
$$f(t_1,\ldots,t_{k_i-1},t_{1,j_1},t_{1,j_2})\mapsto 
\partial_{t_{1,j_1}t_{1}}\partial_{t_{1,j_1}t_{2}}\cdots\partial_{t_{1,j_1}t_{k_i-1}} f_{j_1=j_2}.$$
Here $t_1,\ldots,t_{{k_i}-1}$ are the edge variables for the right vertical 
edge inside the square. 
\end{lemma}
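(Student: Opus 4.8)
The proof rests on the fact that, by Proposition~\ref{ext-hom}, the assignment $\widehat{M},\widehat{N}\mapsto \EXT^0(\widehat{M},\widehat{N})$ is a bifunctor canonically identified with $\HOM_{\HMF}(\widehat{M},\widehat{N})$, so that a composite of morphisms of matrix factorizations induces the composite of the corresponding $R$-linear maps on $\EXT^0$. The plan is therefore to identify each of the three constituent morphisms $\widehat{U}_{(1,k_i)}$, $\widehat{\psi}_{j_1,j_2}$ and $\widehat{D}_{(k_{i+1}-1,1)}$ on $\EXT^0$ using the descriptions recalled in Sections~\ref{sec-mor1}--\ref{sec-mor3}, and then simply to compose them. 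As a preliminary step I would fix, once and for all, the identifications of the relevant $\EXT^0$-spaces with free modules over suitable rings of partially symmetric polynomials coming from the Koszul-factorization isomorphisms \eqref{isom-bubble}, \eqref{eq:Gamma31}--\eqref{eq:LkL1} and the homotopy equivalences $\widehat{\Gamma}_2\simeq \widehat{\Gamma}_2'/\langle t_{1,j_1}-t_{1,j_2}\rangle\simeq \widehat{\Gamma}_2'$, together with the grading shifts (the $\{-k\}$, $\{-k_1k_2\}$ factors) that these carry; this is exactly the data already used in the proof of the preceding lemma for $\widehat{CU}_{\pm i,[\vec{k}]}$.

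Next I would assemble the three pieces. As in Section~\ref{sec-mor2}, $\widehat{U}_{(1,k_i)}$ induces on $\EXT^0$ the map sending $1$ to $1$, i.e. the obvious inclusion of polynomial rings. The morphism $\widehat{\psi}_{j_1,j_2}$ is by definition the inverse of $\widehat{\varphi}_{j_1,j_2}$, which factors as $\widehat{\Gamma}_2\xrightarrow{\ \simeq\ }\widehat{\Gamma}_2'/\langle t_{1,j_1}-t_{1,j_2}\rangle\xrightarrow{\ \simeq\ }\widehat{\Gamma}_2'$, both arrows being homotopy equivalences; hence on $\EXT^0$ the map $\widehat{\psi}_{j_1,j_2}$ is realized by the quotient map $\widehat{\Gamma}_2'\to\widehat{\Gamma}_2'/\langle t_{1,j_1}-t_{1,j_2}\rangle$, i.e. by the substitution $f\mapsto f_{j_1=j_2}$. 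Finally, $\widehat{D}_{(k_{i+1}-1,1)}$ is built, exactly as $\widehat{D}_{(1,k)}$ was built from $D_{(4,3)}$ in Section~\ref{sec-mor1}, so that on $\EXT^0$ it induces the iterated divided difference $g\mapsto \partial_{t_{1}t_{1,j_1}}\partial_{t_{2}t_{1,j_1}}\cdots\partial_{t_{k_{i+1}-1}t_{1,j_1}}g$ in the variables $t_1,\dots,t_{k_{i+1}-1}$ attached to the left vertical edge inside the square and the rung variable $t_{1,j_1}$. Composing these in the natural order gives $f\mapsto \partial_{t_{1}t_{1,j_1}}\cdots\partial_{t_{k_{i+1}-1}t_{1,j_1}}f_{j_1=j_2}$, which is the asserted formula; the fact that these identities hold at the level of $\EXT^0=H^0$, rather than only up to homotopy, is guaranteed by Proposition~\ref{hom-eq}. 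A degree bookkeeping confirms consistency: $\widehat{U}_{(1,k_i)}$ has degree $k_i$ and $\widehat{D}_{(k_{i+1}-1,1)}$ has degree $-(k_{i+1}-1)$, forcing $\widehat{\psi}_{j_1,j_2}$ to be of degree $0$, with total $k_i-k_{i+1}+1=\deg\widehat{CA}_{+i,[\vec{k}]}$.

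The statement for $\widehat{CA}_{-i,[\vec{k}]}$ is proved identically, now composing $\widehat{U}_{(k_{i+1},1)}$, $\widehat{\psi}_{j_1,j_2}$ and $\widehat{D}_{(1,k_i-1)}$, the last of which produces the transposed divided differences $\partial_{t_{1,j_1}t_1}\cdots\partial_{t_{1,j_1}t_{k_i-1}}$ in the variables $t_1,\dots,t_{k_i-1}$ of the right vertical edge inside the square; the explicit factor $(-1)^{k_i}$ in the definition of $\widehat{CA}_{-i,[\vec{k}]}$ is present precisely to cancel the sign that arises when the Koszul isomorphisms of Section~\ref{sec-mor2} are composed, exactly as the $(-1)^{k_i}$ in $\widehat{CU}_{+i,[\vec{k}]}$ was, so that the final formula is again sign-free. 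The main obstacle is the identification of $\widehat{\psi}_{j_1,j_2}$ on $\EXT^0$: since it is defined only abstractly as a homotopy inverse, one must verify that under the chosen identifications of the $\EXT^0$-spaces with free polynomial modules it acts as the naive substitution $t_{1,j_1}=t_{1,j_2}$ with coefficient exactly $1$, and that this is compatible with the Koszul-factorization isomorphism \eqref{isom-bubble}; everything else is a routine (if lengthy) chase through the explicit $R$-module maps $I_{(4,3)}$, $D_{(4,3)}$ and the maps of the form $(1,X^{(k,-1)}_{k,(2,3)})$ recalled in Sections~\ref{sec-mor1}--\ref{sec-mor3}.
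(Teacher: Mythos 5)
The paper gives no proof of this lemma (the preceding, analogous lemma for $\widehat{CU}_{\pm i,[\vec{k}]}$ is explicitly ``left to the reader''), so there is nothing to compare against except the intended strategy, and your strategy is clearly the intended one: read off each of the three constituents of $\widehat{CA}_{+i,[\vec{k}]}=\widehat{D}_{(k_{i+1}-1,1)}\widehat{\psi}_{j_1,j_2}\widehat{U}_{(1,k_i)}$ on $\EXT^0$ --- $\widehat{U}$ as the unit inclusion (the Proposition in Section~\ref{sec-mor2}), $\widehat{\psi}_{j_1,j_2}$ as the substitution $t_{1,j_1}=t_{1,j_2}$ coming from $\widehat{\Gamma}_2\simeq\widehat{\Gamma}_2'/\langle t_{1,j_1}-t_{1,j_2}\rangle$, and $\widehat{D}$ as the iterated divided difference from the explicit formula for $D_{(4,3)}$ in Section~\ref{sec-mor1} --- and compose. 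The degree bookkeeping $k_i-(k_{i+1}-1)=k_i-k_{i+1}+1$ is also correct. This yields the first displayed formula of the lemma without difficulty.

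The one genuine weak point is your treatment of the sign in the $\widehat{CA}_{-i,[\vec{k}]}$ case. You assert that the explicit $(-1)^{k_i}$ in its definition cancels a sign from the Koszul isomorphisms ``exactly as the $(-1)^{k_i}$ in $\widehat{CU}_{+i,[\vec{k}]}$ was,'' but that analogy is false: in the lemma for $\widehat{CU}_{+i,[\vec{k}]}$ the factor $(-1)^{k_i}$ \emph{survives} into the final answer $(-1)^{k_i}X_{k_i(i,j_2)}^{(k_i,-1)}$, it is not cancelled. Moreover the claimed target formula $\partial_{t_{1,j_1}t_1}\cdots\partial_{t_{1,j_1}t_{k_i-1}}f_{j_1=j_2}$ differs from the transposed product $\partial_{t_1 t_{1,j_1}}\cdots\partial_{t_{k_i-1}t_{1,j_1}}f_{j_1=j_2}$ by $(-1)^{k_i-1}$ (each divided difference is antisymmetric in its two indices), which does not match a blanket prefactor of $(-1)^{k_i}$; so to close the argument you must actually track which orientation of divided differences the mirror-image map $\widehat{D}_{(1,k_i-1)}$ produces under the chosen Koszul identifications, rather than appeal to an analogy. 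This is a bookkeeping verification, not a conceptual obstacle, but as written the sign claim is unsupported.
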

\quad\\[-2em]
\begin{figure}[htb]
$$
\xymatrix@R=.5pc{
\txt{\input{figure/e-e-1-1}}\hspace{1cm}
\ar@<1mm>[r]^(.4){\widehat{\Phi}_1}&
\ar@<1mm>[l]^(.6){\widehat{\Phi}_2}\hspace{1cm}
\txt{\input{figure/e-e-2}}\hspace{1cm}
\ar@<1mm>[r]^(.6){\widehat{D}_{(j_1,j_2)}}&
\ar@<1mm>[l]^(.4){\widehat{I}_{(j_1,j_2)}}\hspace{1cm}
\txt{\input{figure/e-e-2-diff}}\\
\oE_{(-i,-i),[\vec{k}]}\hspace{1cm}&\hspace{1cm}\widehat{\Gamma}_1\hspace{1cm}&\hspace{1cm}\oE^{(2)}_{-i,[\vec{k}]}
}
$$
\caption{$2$-morphisms II}\label{2-mor2}
\end{figure}
\\
\indent
For the following definition, consider Figure~\ref{2-mor2}. We have 
a canonical isomorphism 
$$\widehat{\Phi}_1\colon\oE_{(-i,-i),[\vec{k}]}\to\widehat{\Gamma}_1$$ and 
the morphism 
$$\widehat{D}_{(j_1,j_2)}\colon \widehat{\Gamma}_1\to
\oE^{(2)}_{-i,[\vec{k}]}.$$

We have a similar Figure for $\oE_{(+i,+i),[\vec{k}]}$ and 
$\oE^{(2)}_{+i,[\vec{k}]}$ and corresponding morphisms, for which we use 
the same notation. 
\begin{definition}
We define the morphisms 
\begin{eqnarray*}
&&\xymatrix{\widehat{D}_{+i(j_1,j_2)}:&\widehat{\txt{\input{figure/f-f-1-1}}}\hspace{.5cm}\ar[r]&\hspace{.5cm}\widehat{\txt{\input{figure/f-f-2-diff}}}}\\
&&\xymatrix{\widehat{D}_{-i(j_1,j_2)}:&\widehat{\txt{\input{figure/e-e-1-1}}}\hspace{.5cm}\ar[r]&\hspace{.5cm}\widehat{\txt{\input{figure/e-e-2-diff}}}}.
\end{eqnarray*}
by 
\begin{eqnarray*}
\widehat{D}_{+i(j_1,j_2)}&:=&\widehat{D}_{(1,1)}\widehat{\Phi}_1\\  
\widehat{D}_{-i(j_1,j_2)}&:=&-\widehat{D}_{(1,1)}\widehat{\Phi}_1.
\end{eqnarray*}

Their degree is:
\begin{eqnarray*}
\deg(\widehat{D}_{+i(j_1,j_2)})=\deg(\widehat{D}_{-i(j_1,j_2)})=-1.
\end{eqnarray*}
\end{definition}
\vskip0.2cm
Let 
$$\widehat{\Phi}_2:\widehat{\Gamma}_1\to\oE_{(-i,-i),[\vec{k}]}$$ 
be the inverse of $\widehat{\Phi}_1$. Recall also the morphism 
$$\widehat{I}_{(j_1,j_2)}:\oE^{(2)}_{-i,[\vec{k}]}\to\widehat{\Gamma}_1.$$ 
\begin{definition}
We define the morphisms 
\begin{eqnarray*}
&&\xymatrix{\widehat{I}_{+i(j_1,j_2)}:&\widehat{\txt{\input{figure/f-f-2-diff}}}\hspace{.5cm}\ar[r]&\hspace{.5cm}\widehat{\txt{\input{figure/f-f-1-1}}}}\\
&&\xymatrix{\widehat{I}_{-i(j_1,j_2)}:&\widehat{\txt{\input{figure/e-e-2-diff}}}\hspace{.5cm}\ar[r]&\hspace{.5cm}\widehat{\txt{\input{figure/e-e-1-1}}}}.
\end{eqnarray*}
by
\begin{eqnarray*}
\widehat{I}_{+i(j_1,j_2)}&:=&\widehat{\Phi}_2\widehat{I}_{(1,1)}\\
\widehat{I}_{-i(j_1,j_2)}&:=&\widehat{\Phi}_2\widehat{I}_{(1,1)}.
\end{eqnarray*}

Their degree is:
\begin{eqnarray}
\deg(\widehat{I}_{+i(j_1,j_2)})=\deg(\widehat{I}_{-i(j_1,j_2)})=-1.
\end{eqnarray}
\end{definition}
\vskip0.2cm
Consider the following figure:
\begin{figure}[htb]
$$
\xymatrix@R=.5pc{
\hspace{-.8cm}\widehat{\scalebox{0.8}{\txt{\input{figure/e-e-1-2-mf1}}}}\hspace{-.8cm}
\ar@<1mm>[r]^(.5){\widehat{TZ}_{(k_{i+1},1)}}&
\ar@<1mm>[l]^(.5){\widehat{TU}_{(k_{i+1},1)}}
\hspace{-.8cm}\widehat{\scalebox{0.8}{\txt{\input{figure/e-e-1-2-mf2}}}}\hspace{-.8cm}
\ar@<1mm>[r]^(.5){\widehat{s}_{j_1,j_2}}&
\ar@<1mm>[l]^(.5){\widehat{s}_{j_1,j_2}}
\hspace{-.8cm}\widehat{\scalebox{0.8}{\txt{\input{figure/e-e-1-2-mf3}}}}\hspace{-.8cm}\\
\oE_{(-i,-(i+1)),[\vec{k}]}&\oE'_{(-(i+1),-i),[\vec{k}]}&\oE_{(-(i+1),-i),[\vec{k}]}
}
$$
\caption{$2$-morphisms III}\label{2-mor3}
\end{figure}
\\
\indent
In 
$$
\begin{array}{ccc}
\input{figure/e-e-1-2-sub1}&\input{figure/e-e-1-2-sub2}&\input{figure/e-e-1-2-sub3}\\\\
\Gamma'_3&\Gamma'_4&\Gamma'_5,
\end{array}
$$
which is part of Figure~\ref{2-mor3}, we have the morphism 
$$\widehat{TZ}_{(k_{i+1},1)}\colon 
\widehat{\Gamma}'_3\to\widehat{\Gamma}'_4$$ 
from Section~\ref{sec-mor3} and the isomorphism 
$$\widehat{s}_{j_1,j_2}\colon\widehat{\Gamma}'_4\to\widehat{\Gamma}'_5$$ 
which swaps the variables $t_{1,j_1}$ and $t_{1,j_2}$.

Of course, there also exists an analogous figure for $\oE_{(i,i+1),[\vec{k}]}$ 
and $\oE_{(i+1,i),[\vec{k}]}$.  
\begin{definition}
We define the morphisms 
\begin{eqnarray*}
&&\xymatrix{\widehat{CR}_{(+(i+1),+i)}:&\hspace{-2.4cm}\widehat{\txt{\input{figure/f-f-1-2-mf2}}}\hspace{-1.4cm}\ar[r]&\hspace{-1.4cm}\widehat{\txt{\input{figure/f-f-1-2-mf4}}}}\hspace{-1.4cm};\\[1em]
&&\xymatrix{\widehat{CR}_{(-i,-(i+1))}:&\hspace{-2.4cm}\widehat{\txt{\input{figure/e-e-1-2-mf1}}}\hspace{-1.4cm}\ar[r]&\hspace{-1.4cm}\widehat{\txt{\input{figure/e-e-1-2-mf3}}}}\hspace{-1.4cm}
\end{eqnarray*}
by 
\begin{eqnarray*}
\widehat{CR}_{(+(i+1),+i)}&:=&
\widehat{s}_{j_1,j_2}(\widehat{\id}_{\widehat{\Lambda}^{[k_1,1]}_{(k_{i};k'_{i},j_2)}}\boxtimes
\widehat{TZ}_{(1,k_{i+1})}\boxtimes
\widehat{\id}_{\widehat{V}^{[1,k_{i+2}-1]}_{(j_1,k_{i+2};k'_{i+2})}});\\[1em] 
\widehat{CR}_{(-i,-(i+1))}&:=&-\widehat{s}_{j_1,j_2}
(\widehat{\id}_{\widehat{V}^{[k_i-1,1]}_{(k_i,j_1;k'_i)}}\boxtimes
\widehat{TZ}_{(k_{i+1},1)}\boxtimes
\widehat{\id}_{\widehat{\Lambda}^{[1,k_{i+1}]}_{(k_{i+2};j_2,k'_{i+2})}}).
\end{eqnarray*}

Their degree is:
$$
\deg(\widehat{CR}_{(+(i+1),+i)})=\deg(\widehat{CR}_{(-i,-(i+1))})=1.
$$
\end{definition}
\vskip0.2cm
\begin{lemma}
$\widehat{CR}^{[k]}_{+(i+1),+i}$ corresponds to the $R^{\vec{k}}$-linear map 
in 
$$\EXT^0(\oE_{(+(i+1),+i),[\vec{k}]},\oE_{(+i,+(i+1)),[\vec{k}]})$$ 
determined by 
$$f\mapsto \widehat{s}_{j_1,j_2}(t_{1,j_2}-t_{1,j_1})f.$$

$\widehat{CR}^{[k]}_{-i,-(i+1)}$  corresponds to the $R^{\vec{k}}$-linear map 
in 
$$\EXT^0(\oE_{(-i,-(i+1)),[\vec{k}]},\oE_{(-(i+1),-i),[\vec{k}]})$$ 
determined by 
$$f\mapsto -\widehat{s}_{j_1,j_2}(t_{1,j_2}-t_{1,j_1})f.$$
\end{lemma}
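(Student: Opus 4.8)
The plan is to unwind the definition of $\widehat{CR}_{(+(i+1),+i)}$ as a vertical composite and translate each factor into its effect on $\EXT^0$ via Proposition~\ref{ext-hom}. By definition $\widehat{CR}_{(+(i+1),+i)}$ equals $\widehat{s}_{j_1,j_2}$ composed with $\widehat{TZ}_{(1,k_{i+1})}$ horizontally tensored on both sides with the identity morphisms of the Koszul blocks $\widehat{\Lambda}^{[k_1,1]}_{(k_i;k_i',j_2)}$ and $\widehat{V}^{[1,k_{i+2}-1]}_{(j_1,k_{i+2};k_{i+2}')}$. Since horizontal composition induces, on $\EXT^0$, the tensor product of the corresponding $R^{\vec{k}}$-linear maps, and tensoring with an identity morphism induces the identity on the relevant $\EXT^0$-factor, the only non-trivial contributions come from $\widehat{TZ}_{(1,k_{i+1})}$ and from $\widehat{s}_{j_1,j_2}$.

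For the $\widehat{TZ}$-factor I would invoke the Proposition at the end of Section~\ref{sec-mor3}, which identifies $\widehat{TZ}_{(k,1)}$ with the map in $\EXT^0(\widehat{\Gamma}_6,\widehat{\Gamma}_5)$ given by multiplication by $t_{1,1}-t_{1,4}$. After relabeling the formal indices of the boundary alphabets according to the isomorphisms~\eqref{eq:Gamma5} and~\eqref{eq:Gamma6} and the way $\oE_{(+(i+1),+i),[\vec{k}]}$ is assembled out of $\widehat{\Gamma}_5$-type blocks glued to the identity webs, the two distinguished edge variables $t_{1,1}$, $t_{1,4}$ become precisely $t_{1,j_2}$ and $t_{1,j_1}$; hence $\widehat{TZ}_{(1,k_{i+1})}$ becomes multiplication by $t_{1,j_2}-t_{1,j_1}$, the sign being fixed by comparing the orientation and ordering conventions used there. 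The isomorphism $\widehat{s}_{j_1,j_2}\colon\widehat{\Gamma}_4'\to\widehat{\Gamma}_5'$ is an honest isomorphism of matrix factorizations interchanging $t_{1,j_1}$ and $t_{1,j_2}$ and fixing all other variables, so on $\EXT^0$ it induces exactly the corresponding ring automorphism, which is what is denoted $\widehat{s}_{j_1,j_2}$ at the level of module maps. Composing these two steps and using functoriality of $\EXT^0$ under vertical composition gives that $\widehat{CR}_{(+(i+1),+i)}$ acts as $f\mapsto \widehat{s}_{j_1,j_2}\bigl((t_{1,j_2}-t_{1,j_1})f\bigr)$. For $\widehat{CR}_{(-i,-(i+1))}$ the argument is identical up to the mirror-image version of Figure~\ref{2-mor3}, the only additional input being the explicit factor $-1$ in its definition, which yields the stated extra minus sign.

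\textbf{Main obstacle.} I expect the delicate part to be the identification of the $\EXT^0$-groups in question with concrete modules of partially symmetric polynomials: one uses Lemma~\ref{lem:ExtvH} together with~\eqref{eq:ExtvH} to rewrite $\EXT(\hat u,\hat v)\cong H(\widehat{u^*v})$, and then one must verify that, under these identifications, tensoring with $\widehat{\id}$ on the outer $\widehat{\Lambda}$ and $\widehat{V}$ factors genuinely induces the identity rather than a nontrivial idempotent. This relies on the indecomposability statements in Propositions~\ref{str-mf-ind1} and~\ref{str-mf-ind2}, which guarantee that the degree-zero endomorphism spaces of those blocks are one-dimensional. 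The secondary obstacle is purely bookkeeping: correctly matching the variable names $t_{1,1}$, $t_{1,4}$ appearing in Section~\ref{sec-mor3} with $t_{1,j_1}$, $t_{1,j_2}$ here through all the homotopy equivalences, and tracking the resulting signs.
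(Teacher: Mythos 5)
Your proposal is correct and follows exactly the route the paper intends: the lemma is a direct consequence of the definitions of $\widehat{CR}_{(+(i+1),+i)}$ and $\widehat{CR}_{(-i,-(i+1))}$ as composites of $\widehat{s}_{j_1,j_2}$ with $\widehat{TZ}$ boxed with identities, together with the Proposition of Section~\ref{sec-mor3} identifying $\widehat{TZ}$ with multiplication by $t_{1,1}-t_{1,4}$, relabelled to $t_{1,j_2}-t_{1,j_1}$, the extra sign in the second case coming from the explicit $-1$ in the definition. The paper leaves this verification implicit, and your unwinding supplies precisely the missing bookkeeping.
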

\vskip0.2cm
Still with respect to Figure \ref{2-mor3} and the subsequent Figure, we also 
have the morphism 
$$\widehat{TU}_{(k_{i+1},1)}:\widehat{\Gamma}'_4\to\widehat{\Gamma}'_3$$
from Section~\ref{sec-mor3}. 
\begin{definition}
We define the morphisms 
\begin{eqnarray*}
&&\xymatrix{\widehat{CR}_{(+i,+(i+1)))}:&\hspace{-2.4cm}\widehat{\txt{\input{figure/f-f-1-2-mf4}}}\hspace{-1.4cm}\ar[r]&\hspace{-1.4cm}\widehat{\txt{\input{figure/f-f-1-2-mf2}}}}\hspace{-1.4cm}\\[1em]
&&\xymatrix{\widehat{CR}_{(-(i+1),-i)}:&\hspace{-2.4cm}\widehat{\txt{\input{figure/e-e-1-2-mf3}}}\hspace{-1.4cm}\ar[r]&\hspace{-1.4cm}\widehat{\txt{\input{figure/e-e-1-2-mf1}}}}\hspace{-1.4cm}
\end{eqnarray*}
by 
\begin{eqnarray*}
\widehat{CR}_{(+i,+(i+1))}&:=&
\widehat{s}_{j_1,j_2}(\widehat{\id}_{\widehat{\Lambda}^{[k_1,1]}_{(k_{i};k'_{i},j_2)}}\boxtimes
\widehat{TU}_{(1,k_{i+1})}\boxtimes
\widehat{\id}_{\widehat{V}^{[1,k_{i+2}-1]}_{(j_1,k_{i+2};k'_{i+2})}});\\[1em]
\widehat{CR}_{(-(i+1),-i)}&:=&\widehat{s}_{j_1,j_2}
(\widehat{\id}_{\widehat{V}^{[k_i-1,1]}_{(k_i,j_1;k'_i)}}\boxtimes
\widehat{TU}_{(k_{i+1},1)}\boxtimes\widehat{\id}_{\widehat{\Lambda}^{[1,k_{i+1}]}_{(k_{i+2};j_2,k'_{i+2})}}).
\end{eqnarray*}

Their degree is:
$$
\deg(\widehat{CR}_{(+i,+(i+1))})=\deg(\widehat{CR}_{(-(i+1),-i)})=1.
$$
\end{definition}
\vskip0.2cm
\begin{lemma}
$\widehat{CR}^{[k]}_{(+i,+(i+1))}$ corresponds to the $R^{\vec{k}}$-linear map in 
$$\EXT^0(\oE^{(1,1)[k]}_{(+i,+(i+1))},\oE^{(1,1)[k]}_{(+(i+1),+i)})$$
determined by 
$$1\mapsto 1.$$

$\widehat{CR}^{[k]}_{(-(i+1),-i)}$ corresponds to the $R^{\vec{k}}$-linear map in 
$$\EXT^0(\oE^{(1,1)[k]}_{(-(i+1),-i)},\oE^{(1,1)[k]}_{(-i,-(i+1))})$$ 
determined by 
$$
1\mapsto 1.
$$
\end{lemma}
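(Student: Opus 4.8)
The plan is to reduce this to the description of the building block $\widehat{TU}$ obtained in Section~\ref{sec-mor3}, proceeding exactly as in the proof of the preceding lemma but with $\widehat{TZ}$ replaced everywhere by $\widehat{TU}$. I would treat $\widehat{CR}^{[k]}_{(-(i+1),-i)}$ in detail; the statement for $\widehat{CR}^{[k]}_{(+i,+(i+1))}$ follows in the same way, using the figure analogous to Figure~\ref{2-mor3} for $\oE_{(i,i+1),[\vec{k}]}$ and $\oE_{(i+1,i),[\vec{k}]}$.

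By definition $\widehat{CR}_{(-(i+1),-i)}$ is, up to tensoring with the identity morphisms of $\widehat{V}^{[k_i-1,1]}_{(k_i,j_1;k'_i)}$ and $\widehat{\Lambda}^{[1,k_{i+1}]}_{(k_{i+2};j_2,k'_{i+2})}$, the composite of the isomorphism $\widehat{s}_{j_1,j_2}$ with $\widehat{TU}_{(k_{i+1},1)}$. By the Proposition in Section~\ref{sec-mor3}, under the isomorphisms~\eqref{eq:Gamma5} and~\eqref{eq:Gamma6} the morphism $\widehat{TU}_{(k_{i+1},1)}$ corresponds to the $R$-linear map on the relevant $\EXT^0$-group determined by sending $1$ to $1$. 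The first step is to note that tensoring a morphism with identity morphisms of matrix factorizations is compatible with the identification, via~\eqref{eq:ExtvH} and Lemma~\ref{lem:ExtvH}, of the relevant $\EXT^0$-groups with degree-zero cohomology of tensor products; hence after these tensorings the induced $R^{\vec{k}}$-linear map is still given by multiplication by~$1$.

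The second step is to absorb $\widehat{s}_{j_1,j_2}$. This isomorphism is nothing but the swap of the two internal edge variables $t_{1,j_1}$ and $t_{1,j_2}$ relating $\oE'_{(-(i+1),-i),[\vec{k}]}$ to $\oE^{(1,1)[k]}_{(-(i+1),-i)}$; since this relabeling of the two internal edges is precisely the one through which the identification of $\EXT^0(\oE^{(1,1)[k]}_{(-(i+1),-i)},\oE^{(1,1)[k]}_{(-i,-(i+1))})$ with an explicit $R^{\vec{k}}$-module is set up, composing with it does not change the map. Hence the composite $\widehat{CR}_{(-(i+1),-i)}$ corresponds to $1\mapsto 1$, and the same argument on the $E_{+i}$-side gives the claim for $\widehat{CR}_{(+i,+(i+1))}$.

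The point where care is needed, and which I expect to be the main obstacle, is the bookkeeping of the internal edge variables and of the precise isomorphisms~\eqref{eq:Gamma5}, \eqref{eq:Gamma6} together with their $E_{+i}$-analogues, so as to be certain that $\widehat{s}_{j_1,j_2}$ produces neither a sign nor an extra polynomial factor. This is exactly the point at which the present lemma diverges from the previous one: there the building block is $\widehat{TZ}$, which corresponds to multiplication by $t_{1,1}-t_{1,4}$, so that the variable swap does yield the extra factor $t_{1,j_2}-t_{1,j_1}$, whereas here $\widehat{TU}$ corresponds to $1\mapsto 1$ and nothing extra appears.
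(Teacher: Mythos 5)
Your proof is correct and follows essentially the same route as the paper, which states this lemma without further argument as an immediate consequence of the definitions: one combines the description of $\widehat{TU}$ from Section~\ref{sec-mor3} as the map determined by $1\mapsto 1$ with the observations that tensoring with identity morphisms and post-composing with the variable swap $\widehat{s}_{j_1,j_2}$ both preserve the generator $1$. Your closing remark also correctly pinpoints why no factor $t_{1,j_2}-t_{1,j_1}$ (and no sign) appears here, in contrast with the preceding $\widehat{TZ}$-based lemma.
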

%
%
%
%
\section{Categorified skew Howe duality}
\label{sec:cathowe}
In this section we categorify the quantum skew Howe duality which 
was explained in Section~\ref{sec:Howe}. We first define a $2$-functor 
$\Gamma_{m,d,N}\colon\u_Q(\mathfrak{sl}_m)^*\to\HMF_{m,d,N}^*$ in 
Subsection~\ref{sec:2rep} and then 
use it to categorify the results in Theorem~\ref{thm:ckm} and 
Corollary~\ref{cor:webirrep} in Subsection~\ref{sec:webcat}.
\subsection{Definition of the $2$-representation}
\label{sec:2rep}
Let $m,d,N\geq 2$ be arbitrary integers, until further notice.  
\begin{definition}
\label{def:2rep}
We define a $2$-functor 
$$\Gamma_{m,d,N}\colon\u_Q(\mathfrak{sl}_m)^*\to\HMF_{m,d,N}^*$$ 
which sends:
\begin{itemize}
\item the objects $\lambda=(\lambda_1,...,\lambda_{m-1})\in \Z^{m-1}$ to 
$\vec{k}$, if $\phi_{m,d,N}(\lambda)=\vec{k}\in \Lambda(m,d)_N$, 
or else to zero.  
\item the $1$-morphisms to matrix factorizations:
$$
\xymatrix@R=.3pc{
{\bf 1}_{\lambda}\{t\} \ar@{|->}[r]&
{\left\{
\begin{array}{cl}
\mathbb{1}_{[\vec{k}]}\{t\}&{\rm if}\,\phi_{m,d,N}(\lambda)\in \Lambda(m,d)_N\\
0&{\rm otherwise.}
\end{array}
\right.
}\\
\mathcal{E}_{+i}{\bf 1}_{\lambda}\{t\} \ar@{|->}[r]&
{\left\{
\begin{array}{cl}
\oE_{+i,[\vec{k}]}\{t+k_i-k_{i+1}+1\}&
{\rm if}\,\phi_{m,d,N}(\lambda)\in \Lambda(m,d)_N\\
0&{\rm otherwise.}
\end{array}
\right.
}\\
\mathcal{E}_{-i}{\bf 1}_{\lambda}\{t\} \ar@{|->}[r]&
{\left\{
\begin{array}{cl}
\oE_{-i,[\vec{k}]}\{t-k_i+k_{i+1}+1\}&{\rm if}\,\phi_{m,d,N}(\lambda)\in \Lambda(m,d)_N\\
0&{\rm otherwise.}
\end{array}
\right.
}
}
$$
In general, the $1$-morphism 
${\bf 1}_{\lambda'}\mathcal{E}_{\ui}{\bf 1}_{\lambda}\{t\}$ is mapped to
$$
{\left\{
\begin{array}{cl}
\oE_{\ui,[\vec{k}]}\{t+d(\vec{k})-d(\vec{k}')\}&{\rm if}\,\phi_{m,d,N}(\lambda)\in \Lambda(m,d)_N\\
0&{\rm otherwise.}
\end{array}
\right.
}
$$
with $\vec{k}=\phi_{m,d,N}(\lambda)$ and $\vec{k}'=
\phi_{m,d,N}(\lambda')$.

In order to avoid cluttering of notation, we will write 
$$
\oE_{\ui,[\vec{k}]}':=\oE_{\ui,[\vec{k}]}\{d(\vec{k})-d(\vec{k}')\}=
\Gamma_{m,d,N}(\mathcal{E}_{\ui}{\bf 1}_{\lambda}).
$$
Of course, we have 
$$
\mathbb{1}_{[\vec{k}]}'=\mathbb{1}_{[\vec{k}]}
$$
for any $\vec{k}\in \Lambda(m,d)_N$.
\item the generating $2$-morphisms in $\u_Q(\mathfrak{sl}_m)^*$ 
to the following 2-morphisms in $\HMF_{m,d,N}^*$:
\begin{eqnarray*}
&&
\xymatrix@R=-1pc{
\txt{\input{figure/e-dot}} \ar@{|->}[r]&
{
\widehat{t_{1,j}}:\oE_{+i,[\vec{k}]}'\to\oE_{+i,[\vec{k}]}' (= t_{1,j}\in \EXT^0)
}\\
&\left({\input{figure/e-mf}}\stackrel{(t_{1,j},t_{1,j})}{\longrightarrow}{\input{figure/e-mf}}\right)
}
\\
&&
\xymatrix@R=-1pc{
\txt{\input{figure/f-dot}} \ar@{|->}[r]&
{\widehat{t_{1,j}}:\oE_{-i,[\vec{k}]}'\to\oE_{-i,[\vec{k}]}'} (= t_{1,j}\in \EXT^0)\\
&\left({\input{figure/f-mf}}\stackrel{(t_{1,j},t_{1,j})}{\longrightarrow}{\input{figure/f-mf}}\right)
}
\end{eqnarray*}

\begin{eqnarray*}
&&
\hspace{-.5cm}
\xymatrix@R=-.8pc{
\txt{\input{figure/e-f-cup}} \ar@{|->}[r]&
{\widehat{CU}^{[\vec{k}]}_{+i}:\mathbb{1}_{[\vec{k}]}\to\oE_{(-i,+i),[\vec{k}]}' 
\quad(=(-1)^{\l_i}\widehat{Z}_{(1,k_{i})} \in \EXT^0)
}\\
&\left({\input{figure/empty-mf}\hspace{-.2cm}}\stackrel{\widehat{I}_{(k_{i+1}-1,1)}}{\longrightarrow}{\hspace{-.2cm}\input{figure/2-mor-e-f}\hspace{-.2cm}}\stackrel{(-1)^{\l_i}\widehat{Z}_{(1,k_{i})}}{\longrightarrow}{\hspace{-.2cm}\input{figure/f-e-i-i-mf}}\right)
}\\
&&
\hspace{-.5cm}
\xymatrix@R=-.8pc{
\txt{\input{figure/f-e-cup}} \ar@{|->}[r]&
{\widehat{CU}^{[\vec{k}]}_{-i}:\mathbb{1}_{[\vec{k}]}\to
\oE_{(+i,-i),[\vec{k}]}'\quad(=\widehat{Z}_{(k_{i+1},1)} \in \EXT^0)
}\\
&\left({\input{figure/empty-mf}\hspace{-.2cm}}\stackrel{\widehat{I}_{(1,k_{i}-1)}}{\longrightarrow}{\hspace{-.2cm}\input{figure/2-mor-f-e}\hspace{-.2cm}}\stackrel{\widehat{Z}_{(k_{i+1},1)}}{\longrightarrow}{\hspace{-.2cm}\input{figure/e-f-i-i-mf}}\right)
}
\\[-1em]
&&
\hspace{-.5cm}
\xymatrix@R=-.8pc{
\txt{\input{figure/e-f-cap}} \ar@{|->}[r]&
{\widehat{CA}^{[\vec{k}]}_{+i}:\oE_{(-i,+i),[\vec{k}]}'\to\mathbb{1}_{[\vec{k}]}}
\quad(=\widehat{D}_{(k_{i+1}-1,1)}{\psi}_{j_1,j_2}\in\EXT^0)\\
&\left({\input{figure/f-e-i-i-mf}\hspace{-.2cm}}\stackrel{\widehat{\psi}_{j_1,j_2}\widehat{U}_{(1,k_i)}}{\longrightarrow}{\hspace{-.2cm}\input{figure/2-mor-e-f}\hspace{-.2cm}}\stackrel{\widehat{D}_{(k_{i+1}-1,1)}}{\longrightarrow}
{\hspace{-.2cm}\input{figure/empty-mf}}\right)
}
\\[-1em]
&&
\hspace{-.5cm}
\xymatrix@R=-.8pc{
\txt{\input{figure/f-e-cap}} \ar@{|->}[r]&
{\widehat{CA}^{[\vec{k}]}_{-i}:\oE_{(+i,-i),[\vec{k}]}'\to\mathbb{1}_{[\vec{k}]}}
\quad(=(-1)^{\l_i}\widehat{D}_{(1,k_{i})}{\psi}_{j_1,j_2}\in\EXT^0)\\
&\left({\input{figure/e-f-i-i-mf}\hspace{-.2cm}}
\stackrel{\widehat{\psi}_{j_1,j_2}\widehat{U}_{(k_{i+1},1)}}
{\longrightarrow}{\hspace{-.2cm}\input{figure/2-mor-f-e}\hspace{-.2cm}}
\stackrel{(-1)^{\l_i}\widehat{D}_{(1,k_{i})}}{\longrightarrow}
{\hspace{-.2cm}\input{figure/empty-mf}}\right)
}
\end{eqnarray*}

$$
\hspace{-.5cm}
\xymatrix{
\txt{\input{figure/e-j-i1}} \ar@{|->}[r]&
{
\left\{
\begin{array}{l}
\widehat{I}_{+i,(j_1,j_2)}\widehat{D}_{+i,(j_1,j_2)}:\oE_{(+i,+i),[\vec{k}]}'\to
\oE_{(+i,+i),[\vec{k}]}'\quad\quad\mathrm{if}\, l=i
\\
\quad(=\widehat{D}_{(j_1,j_2)}\in\EXT^0)\\
\left({\input{figure/e-e-i-i-mf}}\stackrel{\widehat{D}_{+i}}{\longrightarrow}{\input{figure/e-2-mf}}\stackrel{\widehat{I}_{+i}}{\longrightarrow}{\input{figure/e-e-i-i-mf}}\right)
\\[2em]
\widehat{CR}_{(+i,+(i+1))}:\oE_{(+i,+(i+1)), [\vec{k}]}'\to
\oE_{(+(i+1),+i),[\vec{k}]}'\hspace{1.5cm}\mathrm{if}\, l=i+1
\\
\quad(=\hat{s}_{(j_1,j_2)}\in\EXT^0)\\
\left({\input{figure/e-e-i+1-i-mf}\hspace{-.5cm}}\stackrel{\widehat{CR}_{(+i,+(i+1))}}{\longrightarrow}{\hspace{-.5cm}\input{figure/e-e-i-i+1-mf}}\right)
\\[2em]
\widehat{CR}_{(+(i+1),+i)}:\oE_{(+i,+(i-1)),[\vec{k}]}'\to\
\oE_{(+(i-1),+i),[\vec{k}]}'\hspace{1.5cm}\mathrm{if}\, l=i-1
\\
\quad(=\hat{s}_{(j_1,j_2)}(t_{1,j_2}-t_{1,j_1})\in\EXT^0)\\
\left({\input{figure/e-e-i-1-i-mf}\hspace{-.5cm}}\stackrel{\widehat{CR}_{(+(i+1),+i)}}{\longrightarrow}{\hspace{-.5cm}\input{figure/e-e-i-i-1-mf}}\right)
\\[2em]
\hat{s}_{j_1,j_2}:\oE_{(+i,+l), [\vec{k}]}'\to\oE_{(+l,+i),[\vec{k}]}'
\quad(=\hat{s}_{(j_1,j_2)}\in\EXT^0)\quad\mathrm{if}\, |l-i|\geq 2
\\
\left({\input{figure/e-e-i-j-mf}}\stackrel{\widehat{s}_{j_1,j_2}}{\longrightarrow}{\input{figure/e-e-j-i-mf}}\right)
\end{array}
\right.
}
}
$$
$$
\hspace{-.5cm}
\xymatrix@R=.3pc{
\input{figure/f-j-i1} \ar@{|->}[r]&
{\left\{
\begin{array}{l}
\widehat{I}_{-i,(j_1,j_2)}\widehat{D}_{-i,(j_1,j_2)}:\oE_{(-i,-i),[\vec{k}]}'\to
\oE_{(-i,-i),[\vec{k}]}'\hspace{1.5cm}\mathrm{if}\, l=i
\\
\quad(=\widehat{D}_{(j_1,j_2)}\in\EXT^0)\\
\left({\input{figure/f-f-i-i-mf}}\stackrel{\widehat{D}_{-i}}{\longrightarrow}{\input{figure/f-2-mf}}\stackrel{\widehat{I}_{-i}}{\longrightarrow}{\input{figure/f-f-i-i-mf}}\right)
\\[1.5em]
\widehat{CR}_{(-i,-(i+1))}:\oE_{(-i,-(i+1)),[\vec{k}]}'\to\oE_{(-(i+1),-i),[\vec{k}]}'
\hspace{1.5cm}\mathrm{if}\, l=i+1\\
\quad(=\hat{s}_{(j_1,j_2)}(t_{1,j_1}-t_{1,j_2})\in\EXT^0)
\\
\left({\input{figure/f-f-i+1-i-mf}\hspace{-.5cm}}\stackrel{\widehat{CR}_{(-i,-(i+1))}}{\longrightarrow}{\hspace{-.5cm}\input{figure/f-f-i-i+1-mf}}\right)
\\[1.5em]
\widehat{CR}_{(-(i+1),-i)}:\oE_{(-(i+1),-i),[\vec{k}]}'\to\oE_{(-i,-(i+1)),[\vec{k}]}'
\hspace{1.5cm}\mathrm{if}\, l=i-1\\
\quad(=\hat{s}_{(j_1,j_2)}\in\EXT^0)
\\
\left({\input{figure/f-f-i-1-i-mf}\hspace{-.5cm}}\stackrel{\widehat{CR}_{(-(i+1),-i)}}{\longrightarrow}{\hspace{-.5cm}\input{figure/f-f-i-i-1-mf}}\right)
\\[1.5em]
\hat{s}_{j_1,j_2}:\oE_{(-i,-l),[\vec{k}]}'\to\oE_{(-l,-i),[\vec{k}]}'
\quad(=\hat{s}_{(j_1,j_2)}\in\EXT^0)\quad\mathrm{if}\, |l-i|\geq 2
\\
\left({\input{figure/f-f-i-j-mf}}\stackrel{\hat{s}_{j_1,j_2}}{\longrightarrow}{\input{figure/f-f-j-i-mf}}\right)
\end{array}
\right.
}
}
$$
\end{itemize}
\end{definition}
\vskip0.5cm
\begin{theorem}
\label{thm:2-funct}
$\Gamma_{m,d,N}\colon \u_Q(\mathfrak{sl}_m)^*\to \HMF_{m,d,N}^*$ is a well-defined 
$2$-functor.
\end{theorem}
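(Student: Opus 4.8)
The plan is to check well-definedness level by level. On objects the assignment is the map $\phi_{m,d,N}$, with the convention that $*$ goes to the zero object; and on $1$-morphisms it is essentially tautological: $\mathbf{1}_{\lambda'}\mathcal{E}_{\ui}\mathbf{1}_{\lambda}\{t\}$ is sent to $\oE_{\ui,[\vec k]}'\{t\}$ (or to zero), and horizontal composition of $1$-morphisms is taken to the tensor product of matrix factorizations, which is built into the definitions of $\oE^{(\underline a)}_{\ui,[\vec k]}$ and of $\HMF_{m,d,N}$; the grading shifts $d(\vec k)-d(\vec k')$ are inserted precisely so that $q$-degrees match. That the prescribed images of the generating $2$-morphisms are genuine (closed, homogeneous, correctly shifted) morphisms of matrix factorizations has already been arranged in Sections~\ref{sec:somemorphisms} and~\ref{sec:HMF}. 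So the real content of Theorem~\ref{thm:2-funct} is that these images satisfy all the defining relations of $\u_Q(\mathfrak{sl}_m)$, namely the signed Khovanov--Lauda relations of~\cite{kl3,kl4}, and I would verify these relations one family at a time.

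The main technical device is the identification of each generating $2$-morphism with an explicit element of an $\EXT^0$-group, together with the isomorphism $\mathrm{EXT}(\hat u,\hat v)\cong H(\widehat{u^*v})\{d(\vec k)\}\langle 1\rangle$ of~\eqref{eq:ExtvH} and Lemma~\ref{lem:ExtvH}. Under this identification the pertinent spaces become modules of partially symmetric polynomials --- subquotients of cohomology rings of partial flag varieties --- on which dots act by multiplication by the variables $t_{1,j}$, cups and caps by multiplication by the classes $X^{(k_i,-1)}_{k_i,(\cdot,\cdot)}$ respectively by divided-difference operators $\partial_{t_it_j}$, and crossings by the operators $\widehat s_{j_1,j_2}$ possibly composed with multiplication by $t_{1,j_2}-t_{1,j_1}$, exactly as recorded in the lemmas of Section~\ref{sec:HMF}. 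Verifying a relation then becomes a polynomial identity in such a module. The families to dispatch are: (i) the $\mathfrak{gl}_1$-relation, which is the nilpotency $\widehat{t_{1,j}}^{\,N}=0$ already proved; (ii) the nilHecke relations for a fixed color $i$ (the relation $\psi^2=0$, the two dot-slide relations, and the braid relation), which follow once one knows that the endomorphism algebra of $\oE^{(a)}_{+i,[\vec k]}$ is the nilHecke algebra realized on partially symmetric polynomials; (iii) the biadjunction/zig-zag identities, the genuine and fake bubble evaluations, and the two extended $\mathfrak{sl}_2$-isomorphisms relating $\mathcal{E}_{+i}\mathcal{E}_{-i}\mathbf 1_\lambda$ and $\mathcal{E}_{-i}\mathcal{E}_{+i}\mathbf 1_\lambda$, which I would read off from the explicit isomorphisms~\eqref{isom-bubble}, \eqref{eq:Gamma31}, \eqref{eq:LkL1}, \eqref{eq:Gamma5}, \eqref{eq:Gamma6} and the maps $\widehat I,\widehat D,\widehat U,\widehat Z,\widehat{TU},\widehat{TZ}$ together with the propositions computing them in $\EXT^0$; (iv) far-commutativity for colors $i,l$ with $|i-l|\ge 2$, which is immediate because the matrix factorizations involved are tensor products over $\C$ of factors on disjoint alphabets and $\widehat s_{j_1,j_2}$ merely interchanges two such factors; and (v) the mixed relations for adjacent colors $|i-l|=1$.

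The hard part is family (v), together with the omnipresent signs. Because the scalars $Q$ are the signed ones of~\cite{kl3,kl4}, every relation must be matched with its precise sign, while the matrix-factorization side carries signs of its own coming from the Koszul differentials, the translation $\langle 1\rangle$, and the normalization factors $q^{d(\vec k)}$ --- this is exactly the bookkeeping responsible for the sign error in~\cite{mack}. I expect the adjacent-color braid-type relation, with the correction term dictated by $Q$, to be the single most delicate point: it requires composing the crossing morphisms $\widehat{CR}$ assembled from $\widehat{TU},\widehat{TZ}$ with the dot and same-color crossing morphisms and tracking homotopies along the chain of isomorphisms~\eqref{eq:Gamma5}--\eqref{eq:Gamma6}. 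A substantial shortcut, which I would use, is Cautis and Lauda's notion of a strong $2$-representation (Definition 1.2 in~\cite{cl}): it suffices to exhibit biadjointness of $\mathcal{E}_{+i}$ and $\mathcal{E}_{-i}$, the nilHecke action, the correct decomposition of $\mathcal{E}_{+i}\mathcal{E}_{-i}\mathbf 1_\lambda$ versus $\mathcal{E}_{-i}\mathcal{E}_{+i}\mathbf 1_\lambda$, and far-commutativity, for then $\u_Q(\mathfrak{sl}_m)$ acts and all remaining relations --- in particular the adjacent-color ones of family (v) --- hold automatically; this reduces the genuinely new computation to the $\EXT^0$-identities already carried out in (iii).
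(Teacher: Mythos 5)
Your proposal is correct, and its core mechanism --- identifying the images of the generating $2$-morphisms with explicit operators (multiplication by $t_{1,j}$, divided differences, the symmetrizers $\widehat{s}_{j_1,j_2}$, etc.) on modules of partially symmetric polynomials via the $\EXT^0$-descriptions and Lemma~\ref{lem:ExtvH}, and then checking the signed Khovanov--Lauda relations as polynomial identities --- is exactly the mechanism the paper relies on. The difference is in how the verification is finished: the paper does not carry out your family-by-family check, but instead observes that these operators coincide with the images of the generators under Khovanov and Lauda's flag-variety $2$-functor $\Gamma_d$ of Section~6 of~\cite{kl3} (the relevant $\EXT^0$-spaces being subquotients of cohomology rings of partial flag varieties, as you yourself note), and then cites their Theorem~6.9 so that all the relation checks --- including your delicate adjacent-colour family (v) and the sign bookkeeping, handled by matching the conventions of~\cite{msv} --- are outsourced rather than redone. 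Your alternative finish via Cautis and Lauda's strong $2$-representations is a legitimate second route that localizes the new computations to the $\mathfrak{sl}_2$ and far-commutativity data; it is essentially the device the paper deploys later (Theorem~\ref{thm:categorification1} via Proposition~\ref{prop:Rouquier1}), though for Theorem~\ref{thm:2-funct} itself one should note that the relations must hold as identities of $2$-morphisms in $\HMF_{m,d,N}$, not merely after acting on a module category, so the direct comparison with $\Gamma_d$ is both the shorter argument and the one the authors actually make.
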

\begin{proof}
Our $\Gamma_{m,d,N}$ is very similar to Khovanov and Lauda's $2$-representation 
$$\Gamma_{d}\colon \u_Q(\mathfrak{sl}_m)^*\to \mathrm{\bf Flag}^*_d$$ 
in Section 6 in~\cite{kl3}. The proof that $\Gamma_{m,d,N}$ is well-defined is 
completely analogous to their proof of Theorem 6.9, 
as can be seen by comparing the images of the generating 
$2$-morphisms in $\u_Q(\mathfrak{sl}_m)^*$ for $\Gamma_{m,d,N}$ 
and $\Gamma_d$. We will therefore omit the actual calculations here. 
Note that our sign conventions in the definition of 
$\Gamma_{m,d,N}$ correspond to the ones used in~\cite{msv}.  
\end{proof}

Although $\Gamma_{m,d,N}$ and $\Gamma_d$ are similar and the proof of their 
well-definedness relies on the same calculations, they are not equivalent. 
To explain the difference, let us compare the 
decategorification of both $2$-functors. 
First let us explain the statement in Theorem 6.14 in~\cite{kl3}. 
As explained in~\cite{msv}, the $2$-functor $\Gamma_{d}$ actually 
categorifies the surjective homomorphims of $\U{m}$ onto 
\begin{equation}
\label{eq:schur}
\prod_{\phi_{m,d,N}(\lambda)\in \Lambda(m,d)^+}\mathrm{End}_{\C(q)}(V_{\lambda}),
\end{equation}
which is isomorphic to the quantum Schur algebra $S_q(m,d)$. 
Here 
\begin{gather*}
\Lambda(m,d):=\{\mu\in \N^m\mid \mu_1+\cdots,\mu_m=d\}
\end{gather*}
is the set of $m$-part compositions of $d$ and $\Lambda(m,d)^+$ is 
its subset of partitions, i.e. those 
$\mu\in\Lambda(m,d)$ such that $\mu_1\geq \mu_2\geq \cdots,\mu_m$. 
Thus $\Gamma_d$ descends to a quotient of $\u_Q(\mathfrak{sl}_m)$ which 
categorifies $S_q(m,d)$, as was proved in~\cite{msv}. 

The irreducible $\U{m}$-module $V_d$ with highest weight 
$(d,0,\ldots,0)$ is isomorphic to the left ideal 
$S_q(m,d)1_{(d,0,\ldots,0,0)}\triangleleft S_q(m,d)$ and is categorified by 
Khovanov and Lauda's category 
\begin{equation}
\label{eq:VN}
\bigoplus_{\underline{k}}H_{\underline{k}}-\mathrm{gmod}.
\end{equation} 
So the statement of Theorem 6.14 in~\cite{kl3} should be interpreted as 
meaning that the category in~\eqref{eq:VN} categorifies the underlying 
vector space $V_d$ and that $\Gamma_d$ categorifies the action of $\U{m}$ on 
$V_d$. For more details on the quantum Schur algebra and its categorification 
see~\cite{msv} and references therein.  

Our $\Gamma_{m,d,N}$ categorifies the surjective 
homomorphism of $\U{m}$ onto the quotient of the algebra in~\eqref{eq:schur} 
by the ideal generated by 
those $\mu\in\Lambda(m,d)^+$ which are not $N$-bounded. 

For $m\geq d=N\ell$ and $\Lambda=N\omega_{\ell}$, 
the irreducible $\U{m}$-representation $V_{\Lambda}$ 
can be obtained as (sub)quotient of $S_q(m,d)$, but a different one. 
We have  
$$V_{\Lambda}\cong S_q(m,d)1_{\Lambda}/[\mu > \Lambda],$$
where the ideal $[\mu >\Lambda]$ is generated by all 
$1_{\mu}$ with $\mu > \Lambda$. 

Recall that $V_{\Lambda}\cong W_{\Lambda}$, 
by Corollary~\ref{cor:webirrep}. As we will show in 
Theorem~\ref{thm:categorification1}, 
our $\Gamma_{m,d,N}$ defines an additive strong $\mathfrak{sl}_m$ 
2-representation on $\mathcal{W}^{\circ}_{\Lambda}$, which categorifies the 
$\U{m}$-representation on $W_{\Lambda}$.   

\subsection{The graded web category}
\label{sec:webcat}
Let $m\geq d=N\ell$ and $\Lambda=N\omega_{\ell}$. 
In this section we define a $\C$-linear category 
$\mathcal{W}^{\circ}(\vec{k},N)$. 
\begin{definition}
\label{def:web-category}
The objects of $\mathcal{W}^{\circ}(\vec{k},N)$ are by definition 
all matrix factorizations which are homotopy equivalent to 
direct sums of matrix factorizations of the form $\hat{u}\{t\}$, 
where $u$ is an $N$-ladder with $m$ uprights in $W(\vec{k},N)$ and 
$t\in\Z$. 

For any pair of objects $X,Y\in \mathcal{W}^{\circ}(\vec{k},N)$, 
we define the hom-space between them as 
$$\mathcal{W}^{\circ}(X,Y):=
\mathrm{Ext}(X,Y).$$

Composition in $\mathcal{W}^{\circ}(\vec{k},N)$ is induced by the composition of 
homomorphisms between matrix factorizations. 

Note that $\mathcal{W}^{\circ}(\vec{k},N)^*$ is a $\mathbb{Z}$-graded 
$\C$-linear additive category which admits translation 
and has finite-dimensional 
hom-spaces. 
\end{definition}

By definition, $\mathcal{W}^{\circ}(\vec{k},N)$ is a full subcategory of the 
homotopy category of matrix factorizations with fixed potential 
determined by $\vec{k}$ and $N$. The latter category is Krull-Schmidt 
by Propositions 24 and 25 in~\cite{kr}. Therefore, we can take the 
Karoubi envelope or idempotent completion of 
$\mathcal{W}^{\circ}(\vec{k},N)$, denoted 
$\dot{\mathcal{W}^{\circ}}(\vec{k},N)$, which is also Krull-Schmidt.  
The point here is that a matrix factorization associated to a monomial 
web might have direct summands in the homotopy category 
which are not associated to any monomial webs, so we have to 
include these in our web category by taking the Karoubi envelope. For 
$N\geq 3$ it is very hard to determine the indecomposable summands of 
webs in general, which is why surjectivity in 
Corollary~\ref{cor:categorification11} would 
be hard to prove directly. For more information for $N=3$ 
see~\cite{kk,mpt,mn} and for general $N$ see the sequel to this 
paper~\cite{mack2}.   
\vskip0.5cm
For any monomial web $u\in W(\vec{k},N)$, 
we write 
$$\hat{u}':=\hat{u}\{-d(\vec{k})\}.$$
This is consistent with our notation in Definition~\ref{def:2rep}. 
\begin{definition}
\label{def:gammamap}
We define a linear map 
$$\delta_{\vec{k},N}^{\circ}\colon W(\vec{k},N)\to 
K_0^q(\dot{\mathcal{W}}^{\circ}(\vec{k},N))$$
by 
$$u\mapsto [\hat{u}']=q^{-d(\vec{k})}[\hat{u}],$$
for any $N$-ladder with $m$-uprights $u\in W(\vec{k},N)$. 
\end{definition}
\noindent In Corollary~\ref{cor:categorification11}, we will show that 
$\delta^{\circ}_{\vec{k},N}$ is an isomorphism. For now, we can only show injectivity.  
\begin{lemma}
\label{lem:injectivity}
The map $\delta^{\circ}_{\vec{k},N}$ is injective.
\end{lemma}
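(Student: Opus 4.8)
The plan is to deduce injectivity of $\delta^{\circ}_{\vec{k},N}$ from the compatibility of the web form on $W(\vec{k},N)$ with the Euler form on $\dot{\mathcal{W}}^{\circ}(\vec{k},N)$, together with the non-degeneracy of the $q$-Shapovalov form. First I would recall from Corollary~\ref{cor:webirrep} that the $q$-sesquilinear web form on $W(\vec{k},N)$ is (the weight-$\vec{k}$ piece of) the $q$-Shapovalov form, hence non-degenerate. So it suffices to show that $\delta^{\circ}_{\vec{k},N}$ intertwines the web form with the Euler form on $K_0^q(\dot{\mathcal{W}}^{\circ}(\vec{k},N))$: if $u\in W(\vec{k},N)$ satisfies $\delta^{\circ}_{\vec{k},N}(u)=0$, then $\langle u,v\rangle = \langle \delta^{\circ}_{\vec{k},N}(u),\delta^{\circ}_{\vec{k},N}(v)\rangle = 0$ for all $v$, forcing $u=0$ by non-degeneracy. (One should be slightly careful: $\delta^{\circ}_{\vec{k},N}$ is defined on ladders, but by Corollary~\ref{cor:webirrep} every monomial web is a linear combination of ladders with $m$ uprights, so the map extends linearly to all of $W(\vec{k},N)$ and the argument applies to arbitrary $u$.)

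The key computation is therefore the identity
$$
\langle [\hat{u}'],[\hat{v}']\rangle_{\mathrm{Euler}} = \langle u,v\rangle_{\mathrm{web}}
$$
for monomial webs $u,v\in W(\vec{k},N)$. Here the left side is $\dim_q \mathrm{EXT}(\hat{u}',\hat{v}')$ by definition of the Euler form in Section~\ref{sec:conventions}. Using $\hat{u}'=\hat{u}\{-d(\vec{k})\}$ and $\hat{v}'=\hat{v}\{-d(\vec{k})\}$, this equals $q^{-2d(\vec{k})}\dim_q\mathrm{EXT}(\hat{u},\hat{v})$ up to the usual bookkeeping; but the grading shifts on source and target partly cancel, and by~\eqref{eq:ExtvH} we have $\mathrm{EXT}(\hat{u},\hat{v})\cong H(\widehat{u^*v})\{d(\vec{k})\}\langle 1\rangle$. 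Taking graded dimension and combining with $\mathrm{ev}(u^*v)$ being the graded dimension of the cohomology of the closed web $\widehat{u^*v}$ (this is exactly the statement, due to Khovanov--Rozansky and Wu/Yonezawa, that matrix factorizations categorify the Kuperberg-type evaluation of closed webs), one reads off $\dim_q\mathrm{EXT}(\hat{u}',\hat{v}') = q^{d(\vec{k})}\mathrm{ev}(u^*v) = \langle u,v\rangle$, matching~\eqref{eq:webbracket}. I would organize this as: (i) rewrite the Euler form via~\eqref{eq:ExtvH}; (ii) track the grading shifts $\{-d(\vec k)\}$, $\{d(\vec k)\}$ and $\langle 1\rangle$ carefully, noting $\langle 1\rangle$ swaps $\mathrm{EXT}^0$ and $\mathrm{EXT}^1$ but graded dimension of the total $\mathrm{EXT}$ is insensitive to this; (iii) identify $\dim_q H(\widehat{u^*v})$ with $\mathrm{ev}(u^*v)$.

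The main obstacle is step (iii): making precise that the graded dimension of $H(\widehat{\Gamma})$ for a closed web $\Gamma$ computes the evaluation $\mathrm{ev}$ (the quantum invariant of the closed web, forgetting $N$-labeled edges). This is exactly the content of the colored Khovanov--Rozansky / Wu / Yonezawa construction, and in the paper it is packaged into Theorem~\ref{thm:catwebrels} (the matrix factorizations satisfy all the CKM relations except the tag relation, up to $q$-degree-preserving homotopy equivalence with possible homological shifts). So I would argue: closed webs can be reduced, using relations~\eqref{eq:paralleldigon}--\eqref{eq:oppositesquare}, to a scalar in $\N[q,q^{-1}]$ times the empty web; applying $H(-)$ and Theorem~\ref{thm:catwebrels} shows $\dim_q H(\widehat{\Gamma})$ equals that same scalar (the homological shifts are harmless since we only take the underlying graded vector space, and $q$-degrees are preserved); and $H$ of the empty web is $\C$ in degree zero. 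Hence $\dim_q H(\widehat{u^*v})=\mathrm{ev}(u^*v)$. Alternatively — and perhaps more cleanly — one can cite the fact (used implicitly throughout the paper and proved in~\cite{kr,wu,yo1,yo2}) that $\EXT$ between web matrix factorizations categorifies the web pairing; I would state this as a lemma with reference rather than reproving it. With step (iii) in hand, the rest is the formal non-degeneracy argument above, which is short.
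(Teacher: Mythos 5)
Your proposal is correct and follows essentially the same route as the paper: show that $\delta^{\circ}_{\vec{k},N}$ is an isometry from the (non-degenerate) web form to the Euler form, using the identification $\mathrm{EXT}(\hat{u},\hat{v})\cong H(\widehat{u^*v})\{d(\vec{k})\}\langle 1\rangle$ together with Theorem~\ref{thm:catwebrels} to compute $\dim_q\mathrm{EXT}(\hat{u}',\hat{v}')=q^{d(\vec{k})}\mathrm{ev}(u^*v)=\langle u,v\rangle$, and then conclude injectivity from non-degeneracy. Your extra care about extending $\delta^{\circ}_{\vec{k},N}$ from ladders to all of $W(\vec{k},N)$ and about identifying $\dim_q H(\widehat{u^*v})$ with $\mathrm{ev}(u^*v)$ only makes explicit what the paper leaves implicit.
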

\begin{proof}
Recall that the Euler form 
$$\langle [P],[Q]\rangle=\dim_q \mathrm{HOM}(P,Q)$$ 
is a non-degenerate $q$-sesquilinear form on 
$K_0^q(W^{\circ}(\vec{k},N))$. Furthermore, the sesquilinear web form 
gives a non-degenerate $q$-sesquilinear form 
on $W(\vec{k},N)$. 

The map $\delta^{\circ}_{\vec{k},N}$ is an isometry w.r.t. these two forms 
because we have 
$$\dim_q (\mathcal{W}^{\circ}(\hat{u}',\hat{v}'))=
\dim_q(\mathrm{EXT}(\hat{u}',\hat{v}'))=$$
$$q^{d(\vec{k})}\dim_q(H(\widehat{u^*v}))=q^{d(\vec{k})}\mathrm{ev}(u^*v)=
\langle u,v\rangle.$$
for any $N$-ladders with $m$ uprights $u,v\in W(\vec{k},N)$, by Theorem~\ref{thm:catwebrels}. 

Since isometries for non-degenerate forms are always injective, this proves 
the lemma.  
\end{proof}
\begin{definition}
\label{def:catwebspace}
Define 
\begin{eqnarray*}
\mathcal{W}^{\circ}_{\Lambda}&:=&\bigoplus_{\vec{k}\in \Lambda(m,d)_N}
\mathcal{W}^{\circ}(\vec{k},N).
\end{eqnarray*} 
\end{definition}
We will now show that $\Gamma_{m,N}$ induces a strong $\mathfrak{sl}_m$ 
$2$-representation on $\dot{\mathcal{W}}^{\circ}_{\Lambda}$. The idea is 
quite simple. Given an object $1_{\lambda'}\mathcal{E}_{\underline{i}}1_{\lambda}$ in 
$\u_Q(\mathfrak{sl}_m)$ and an object $\hat{w}'\in 
\mathcal{W}^{\circ}(\vec{k},N)$, such that $\phi_{m,d,N}(\lambda)=\vec{k}$ 
and $\phi_{m,d,N}(\lambda')=\vec{k}'$, we define the 
action of $1_{\lambda'}\mathcal{E}_{\underline{i}}1_{\lambda}$ on $\hat{w}'$ by glueing 
the ladder associated to $E_{\underline{i}}1_{\lambda}$ 
(see Proposition~\ref{prop:CKM}) on top of $w$, which gives a ladder again, 
and taking the corresponding matrix factorization 
$$
\hat{E}_{\underline{i},[\vec{k}]}'\btime{R^{\vec{k}}} \hat{w}'\simeq\widehat{E_{\underline{i}}w}'
\in \mathcal{W}^{\circ}(\vec{k}',N).$$ 
This defines the $2$-representation 
on objects. 

We continue to assume that $\phi_{m,d,N}(\lambda)=\vec{k}\in \Lambda(m,d)_N$. 
Given any $2$-morphism $f\in \mathrm{HOM}(\mathcal{E}_{\underline{i}}1_{\lambda}, 
\mathcal{E}_{\underline{j}}1_{\lambda})$ in $\u_Q(\mathfrak{sl}_m)$,  
any pair of objects $\hat{u}',\hat{v}'\in \mathcal{W}^{\circ}(\vec{k},N)$ and any 
morphism $\phi\in \mathcal{W}^{\circ}(\hat{u}',\hat{v}')=
\mathrm{EXT}(\hat{u}',\hat{v}')$, we define 
the action of $f$ on $\phi$ by 
$$
\hat{f}\boxtimes \phi\in \mathrm{EXT}(\hat{E}_{\underline{i},[\vec{k}]}'
\btime{R^{\vec{k}}} \hat{u}', \hat{E}_{\underline{j},[\vec{k}]}'
\btime{R^{\vec{k}}} \hat{v}').
$$    

By Theorem~\ref{thm:2-funct}, 
the definition above gives a well-defined $2$-action on 
$\mathcal{W}^{\circ}_{\Lambda}$ which extends to a well-defined strong 
$\mathfrak{sl}_m$ $2$-representation on 
the Karoubi envelope $\dot{\mathcal{W}}^{\circ}_{\Lambda}$.  

Let us now show that we can apply the additive version of Rouquier's 
universality proposition, reproduced in Proposition~\ref{prop:Rouquier1}, 
to this strong $\mathfrak{sl}_m$ $2$-representation. 

\begin{theorem}
\label{thm:categorification1}
With the definitions above, $\dot{\mathcal{W}}^{\circ}_\Lambda$ is a 
strong $\mathfrak{sl}_m$ $2$-representation equivalent to 
$\mathcal{V}_{\Lambda}^p$. 
\end{theorem}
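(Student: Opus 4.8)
The plan is to verify the three hypotheses of Proposition~\ref{prop:Rouquier1} for the strong $\mathfrak{sl}_m$ $2$-representation on $\dot{\mathcal{W}}^{\circ}_{\Lambda}$ constructed above, and then invoke that proposition directly to conclude the equivalence with $\mathcal{V}_{\Lambda}^p$. The first hypothesis, that $\dot{\mathcal{W}}^{\circ}_{\Lambda}$ carries a strong $2$-representation by $\C$-linear functors, has essentially already been established: it follows from Theorem~\ref{thm:2-funct} together with the observation that glueing the ladder for $E_{\underline{i}}1_{\lambda}$ on top of a ladder in $W(\vec{k},N)$ again yields a ladder, so that $\hat{E}_{\underline{i},[\vec{k}]}'\btime{R^{\vec{k}}}\hat{w}'\simeq\widehat{E_{\underline{i}}w}'$ lands back in $\mathcal{W}^{\circ}(\vec{k}',N)$ (using Corollary~\ref{cor:webirrep}, which tells us every such composite is a linear combination of ladders, and the Krull--Schmidt property to pass to the Karoubi envelope). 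So the first paragraph of the proof just assembles these facts.

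Next I would check the second hypothesis. Take $V(\Lambda):=\widehat{w}_{\Lambda}'$, the matrix factorization associated to the highest-weight web $w_{\Lambda}$ (the $\ell$ parallel vertical $N$-edges), shifted by $\{-d(\Lambda)\}$. Indecomposability follows from the indecomposability of the $\widehat{L}^{[N]}$'s in Proposition~\ref{str-mf-ind1}(2) and of their tensor products (Proposition in Section~\ref{sec:EFMF}), or directly from $\mathrm{EXT}(\widehat{w}_{\Lambda}',\widehat{w}_{\Lambda}')\cong H(\widehat{w_{\Lambda}^*w_{\Lambda}})\{d(\Lambda)\}\langle 1\rangle$ being one-dimensional in degree zero. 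That $\mathcal{E}_{+i}V(\Lambda)=0$ for all $i$ is the categorical shadow of $E_{+i}w_{\Lambda}=0$: the web $E_{+i}w_{\Lambda}$ has a rung between two $N$-edges, forcing a $0$-colored or $(N{+}1)$-colored edge, and by Proposition~\ref{str-mf-ind1}(1) (and Proposition~\ref{str-mf-ind2}(1)) the corresponding matrix factorization is contractible; I would spell this out at the level of the defining formula for $\oE^{(a)}_{+i,[\Lambda]}$. Finally $\mathrm{End}(V(\Lambda))\cong\C$ is exactly the degree-zero part of the $\mathrm{EXT}$-computation just mentioned, combined with Lemma~\ref{lem:ExtvH} and the evaluation of the closed web $w_{\Lambda}^*w_{\Lambda}$.

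The third hypothesis — every object of $\dot{\mathcal{W}}^{\circ}_{\Lambda}$ is a direct summand of $X\,V(\Lambda)$ for some $1$-morphism $X\in\u_Q(\mathfrak{sl}_m)$ — is where the real content lies, and I expect this to be the main obstacle. By definition every object of $\mathcal{W}^{\circ}(\vec{k},N)$ is (up to homotopy and shift) a direct sum of $\hat{u}$ for $u$ an $N$-ladder with $m$ uprights in $W(\vec{k},N)$, so it suffices to realize each such $\hat{u}'$ as a summand of some $X\,V(\Lambda)$. But Corollary~\ref{cor:positivityforclosedwebs}, or rather the argument behind Proposition~\ref{prop:CKM}, shows that any ladder $u$ with $m$ uprights is the image under $\gamma_{m,d,N}$ of a product of divided powers $E_{\underline{i}}^{(\underline{a})}1_{\Lambda}$ applied to $w_{\Lambda}$; on the categorified side this means $\widehat{E_{\underline{i}}^{(\underline{a})}w_{\Lambda}}'\simeq\hat{u}'$, and since the divided-power $1$-morphism $\mathcal{E}_{\underline{i}}^{(\underline{a})}1_{\Lambda}$ is a direct summand of $\mathcal{E}_{\underline{i}}1_{\Lambda}$ in $\u_Q(\mathfrak{sl}_m)$, the matrix factorization $\hat{u}'$ is a direct summand of $\Gamma_{m,d,N}(\mathcal{E}_{\underline{i}}1_{\Lambda})\,V(\Lambda)$. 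Here one must be careful that $\Gamma_{m,d,N}$ is only a $2$-functor (not an equivalence), so the passage from "image of divided power equals $\hat u'$" to "$\hat u'$ is a summand" uses that $\Gamma_{m,d,N}$ sends the idempotent in $\u_Q$ cutting out $\mathcal{E}_{\underline{i}}^{(\underline{a})}$ to an idempotent on $\Gamma_{m,d,N}(\mathcal{E}_{\underline{i}})V(\Lambda)$ whose image is $\hat u'$; this is legitimate precisely because we work in the Karoubi envelope.

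Once all three hypotheses are verified, Proposition~\ref{prop:Rouquier1} yields an equivalence $\mathcal{V}_{\Lambda}^p\xrightarrow{\ \sim\ }\dot{\mathcal{W}}^{\circ}_{\Lambda}$ of additive strong $\mathfrak{sl}_m$ $2$-representations, which is the assertion of the theorem. I would close by remarking that combined with Lemma~\ref{lem:injectivity} and the fact (from Theorem~\ref{thm:catwebrels}, as used in that lemma) that $\delta^{\circ}_{\vec{k},N}$ is an isometry for the web form and the Euler form, this also identifies $K_0^q(\dot{\mathcal{W}}^{\circ}_{\Lambda})$ with $V_{\Lambda}\cong W_{\Lambda}$ compatibly with the Shapovalov/web form — setting up Corollary~\ref{cor:categorification11} — though that part properly belongs to the corollary rather than to this proof.
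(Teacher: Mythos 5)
Your proposal is correct and follows essentially the same route as the paper: verify the hypotheses of Proposition~\ref{prop:Rouquier1} with $V(\Lambda)=\hat{w}_{\Lambda}$, using Corollary~\ref{cor:positivityforclosedwebs} for the highest-weight object and Proposition~\ref{prop:CKM} (ladders as images of products of divided powers) for the generation condition. You in fact supply more detail than the paper does, in particular on why $\mathcal{E}_{+i}V(\Lambda)=0$ and on the role of the Karoubi envelope in passing from divided-power idempotents to direct summands.
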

\begin{proof}
We have to show that the last three conditions in 
Proposition~\ref{prop:Rouquier1} are also fulfilled. 

Take $w_{\Lambda}$ to be 
the web consisting of $\ell$ vertical upward oriented $N$-edges 
in $W(\Lambda,N)$. By 
Corollary~\ref{cor:positivityforclosedwebs}, any indecomposable 
object in $\dot{\mathcal{W}}^{\circ}(\Lambda,N)$ is isomorphic to 
$\hat{w}_{\Lambda}$ up to a grading shift. 

Clearly we have  
$$\mathrm{End}(\hat{w}_{\Lambda})\cong \C.$$ 

And finally, any $N$-ladder with $m$ uprights is the image of a 
product of divided powers by Proposition~\ref{prop:CKM}, so we see 
that the third condition of Proposition~\ref{prop:Rouquier1} is fulfilled. 
\end{proof}

Let $$\delta^{\circ}\colon W_{\Lambda}\to 
K_0^q(\dot{\mathcal{W}}^{\circ}_{\Lambda})$$ 
be the direct sum of the maps $\delta^{\circ}_{\vec{k},N}$ from 
Definition~\ref{def:gammamap} over all $\vec{k}\in \Lambda(m,d)_N$.  
\begin{corollary}
\label{cor:categorification11}
The map $\delta^{\circ}$ is an isomorphism of 
$\U{m}$-representations. In particular, $\delta^{\circ}_{\vec{k},N}$ is 
an isomorphism for all $\vec{k}\in \Lambda(m,d)_N$.
\end{corollary}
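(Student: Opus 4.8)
The plan is to deduce Corollary~\ref{cor:categorification11} from the equivalence established in Theorem~\ref{thm:categorification1} together with Brundan and Kleshchev's categorification theorem (Theorem~\ref{thm:cyclKLR}) and the uniqueness properties of the $q$-Shapovalov form. First I would observe that Theorem~\ref{thm:categorification1} gives an equivalence $\dot{\mathcal W}^{\circ}_{\Lambda}\simeq \mathcal V_{\Lambda}^p$ of strong $\mathfrak{sl}_m$ $2$-representations. Applying $K_0^q(-)$ and composing with Brundan and Kleshchev's isomorphism $\delta\colon V_{\Lambda}\to K_0^q(\mathcal V_{\Lambda}^p)$ from Theorem~\ref{thm:cyclKLR}, we obtain an isomorphism $V_{\Lambda}\to K_0^q(\dot{\mathcal W}^{\circ}_{\Lambda})$ of $\U{m}$-modules. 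Since $W_{\Lambda}\cong V_{\Lambda}$ as $\U{m}$-modules by Corollary~\ref{cor:webirrep}, this already shows the target and source of $\delta^{\circ}$ are abstractly isomorphic; the real content is to identify $\delta^{\circ}$ itself with this isomorphism (or at least show $\delta^{\circ}$ is a morphism of $\U{m}$-modules that is an isomorphism).

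The key steps, in order, would be: (1) Check that $\delta^{\circ}$ is $\U{m}$-equivariant. By Corollary~\ref{cor:webirrep} every element of $W(\vec k,N)$ is a linear combination of $N$-ladders, and the action of $E_{\pm i}^{(a)}$ on a ladder $u$ is given by glueing the ladder for $E_{\pm i}^{(a)}1_{\lambda}$ on top, as spelled out just before Theorem~\ref{thm:categorification1}; on the categorified side $\Gamma_{m,d,N}$ sends $\mathcal E_{\pm i}^{(a)}$ to $\oE_{\pm i,[\vec k]}^{(a)}$, and $\oE_{\pm i,[\vec k]}^{(a)}\btime{R^{\vec k}}\hat u'\simeq\widehat{E_{\pm i}^{(a)}u}{}'$ by the very definition of these matrix factorizations as tensor products of the web building blocks (together with Proposition~\ref{str-mf-isom} and Theorem~\ref{thm:catwebrels}). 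Hence $\delta^{\circ}$ intertwines the two actions. (2) Combine injectivity (Lemma~\ref{lem:injectivity}) with a dimension count: by Theorem~\ref{thm:categorification1} we have $K_0^q(\dot{\mathcal W}^{\circ}_{\Lambda})\cong K_0^q(\mathcal V_{\Lambda}^p)\cong V_{\Lambda}$, so its graded rank equals $\dim V_{\Lambda}=\dim W_{\Lambda}$ (the last equality is in the proof of Corollary~\ref{cor:webirrep}); an injective $\Z[q,q^{-1}]$-linear map between free modules of the same finite rank whose source spans after tensoring with $\C(q)$ must be surjective after tensoring with $\C(q)$, and then one upgrades to an isomorphism over $\Z[q,q^{-1}]$ using that $\delta^{\circ}$ is an isometry for the Euler form, which takes values in $\N[q,q^{-1}]$, against the Shapovalov form, which is unimodular on the canonical/dual-canonical bases. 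Alternatively, and more cleanly, one identifies $\delta^{\circ}$ directly with $\delta$ precomposed with the equivalence of Theorem~\ref{thm:categorification1}, using that both send $w_{\Lambda}$ to the class of the unique indecomposable $\hat w_{\Lambda}$ of weight $\Lambda$ and both are $\U{m}$-equivariant, hence agree by irreducibility of $V_{\Lambda}$.

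The last sentence of the corollary, that each $\delta^{\circ}_{\vec k,N}$ is an isomorphism, then follows because $\delta^{\circ}$ is weight-preserving: it restricts on each weight space $W(\vec k,N)$ to $\delta^{\circ}_{\vec k,N}$ mapping into $K_0^q(\dot{\mathcal W}^{\circ}(\vec k,N))$, and an isomorphism of graded modules preserving the weight-space decomposition restricts to isomorphisms on each graded piece.

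I expect the main obstacle to be step (1): making the identification $\oE_{\ui,[\vec k]}^{(\underline a)}\btime{R^{\vec k}}\hat u'\simeq\widehat{E_{\ui}u}{}'$ genuinely functorial, i.e.\ ensuring that the $2$-representation structure on $\dot{\mathcal W}^{\circ}_{\Lambda}$ coming from $\Gamma_{m,d,N}$ really decategorifies to the honest $\U{m}$-action on $W_{\Lambda}$ given by Proposition~\ref{prop:CKM}, with all grading shifts and signs matching (the primes $\oE'_{\ui,[\vec k]}=\oE_{\ui,[\vec k]}\{d(\vec k)-d(\vec k')\}$ and the normalization $\hat u'=\hat u\{-d(\vec k)\}$ were set up precisely so that this works, so the check is bookkeeping, but it is the crux). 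Everything else is formal consequence of Theorem~\ref{thm:categorification1}, Theorem~\ref{thm:cyclKLR}, Lemma~\ref{lem:injectivity}, and Corollary~\ref{cor:webirrep}.
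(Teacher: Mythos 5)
Your proposal is correct and follows essentially the same route as the paper: the paper's proof is exactly the dimension count $\dim_q K_0^q(\dot{\mathcal W}^{\circ}_{\Lambda})=\dim_q V_{\Lambda}=\dim_q W_{\Lambda}$ obtained from Theorem~\ref{thm:categorification1} and Corollary~\ref{cor:webirrep}, combined with the injectivity of $\delta^{\circ}$ from Lemma~\ref{lem:injectivity}, with the $\U{m}$-equivariance coming from the construction of the $2$-representation (the identification $\oE_{\ui,[\vec k]}'\btime{R^{\vec k}}\hat u'\simeq\widehat{E_{\ui}u}{}'$) just before Theorem~\ref{thm:categorification1}. Your extra step of upgrading from $\C(q)$ to $\Z[q,q^{-1}]$ is unnecessary here, since $K_0^q$ is by definition already tensored with the field $\C(q)$, so injectivity plus equality of dimensions suffices.
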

\begin{proof}
By Theorem~\ref{thm:categorification1}, we have 
$$\dim_q K_0^q(\dot{\mathcal{W}}^{\circ}_{\Lambda})=
\dim_q K_0^q(\mathcal{V}_{\Lambda})=\dim_q V_{\Lambda}.$$
Since $\dim_q W_{\Lambda}=\dim_q V_{\Lambda}$ by Corollary~\ref{cor:webirrep}, 
we see that 
$$\dim_q K_0^q(\dot{\mathcal{W}}^{\circ}_{\Lambda})=\dim_q W_{\Lambda},$$
which proves this corollary. 
\end{proof}

%
%
%
%

\vskip0.3cm
\noindent M.~M.: {\sl \small CAMGSD, Instituto Superior T\'{e}cnico, 
Lisboa, Portugal; Universidade do Algarve, Faro, Portugal} 
\newline \noindent {\tt \small email: mmackaay@ualg.pt}
\vskip0.2cm
\noindent Y.~Y: {\sl \small Graduate School of Mathematics, 
Nagoya University, 464-8602 Furocho, Chikusaku, Nagoya, Japan}
\newline \noindent {\tt \small email: yasuyoshi.yonezawa@math.nagoya-u.ac.jp}
\end{document}